\newtheorem{lemma}{Lemma}[section]
\newtheorem{proposition}{Proposition}[section]
\newtheorem{theorem}{Theorem}[section]
\newtheorem{corollary}{Corollary}[section]
\newtheorem{definition}{Definition}[section]
\newtheorem{remark}{Remark}[section]
\theoremstyle{definition}
\newtheorem{example}{Example}[section]
\def\section{\@startsection{section}{1}%
%\z@{2\linespacing\@plus\linespacing}{2\linespacing}%
\z@{1\linespacing\@plus\linespacing}{1\linespacing}%
{\bf\centering}}
\def\subsection{\@startsection{subsection}{0}%
\z@{\linespacing\@plus\linespacing}{\linespacing}%
{\bf}}
\DeclareMathOperator{\diam}{diam}
\DeclareMathOperator{\sgn}{sgn}
\DeclareMathOperator{\modulo}{mod}
\DeclareMathOperator{\Id}{Id}
\newcommand{\cA}{\mathcal{A}}
\newcommand{\cB}{\mathcal{B}}
\newcommand{\R}{\mathbb{R}}
\newcommand{\Z}{\mathbb{Z_+}}
\newcommand{\N}{\mathbb{N}}
\newcommand{\Zwithneg}{\mathbb{Z}}
\begin{document}
\title[Reflected Brownian motion on simple nested fractals]
{Reflected Brownian motion on simple nested fractals}
\author{Kamil Kaleta, Mariusz Olszewski and Katarzyna Pietruska-Pa{\l}uba}

\address{K. Kaleta and M. Olszewski \\ Faculty of Pure and Applied Mathematics, Wroc{\l}aw University of Science and Technology, Wyb. Wyspia\'nskiego 27, 50-370 Wroc{\l}aw, Poland}
\email{kamil.kaleta@pwr.edu.pl, mariusz.olszewski@pwr.edu.pl}

\address{K. Pietruska-Pa{\l}uba \\ Institute of Mathematics \\ University of Warsaw
\\ ul. Banacha 2, 02-097 Warszawa, Poland}
\email{kpp@mimuw.edu.pl}

\begin{abstract}
{We prove the existence of the reflected diffusion on a complex of an arbitrary size for a
large class of planar simple nested fractals. Such a process is obtained as a folding projection of the
free Brownian motion from the unbounded fractal. We give sharp necessary geometric conditions on
the fractal under which this projection can be well defined. They are illustrated by various specific
examples. We first construct a proper version of the transition probability densities for reflected
process and we prove that it is a continuous, bounded and symmetric function which satisfies the
Chapman-Kolmogorov equations. These provide us with further regularity properties of the reflected process
such us Markov, Feller and strong Feller property }

\bigskip
\noindent
\emph{Key-words}: subordinate Brownian motion, projection, good labeling property, reflected process, nested fractal, Sierpi\'nski gasket, Neumann boundary conditions, integrated density of states

\bigskip
\noindent
2010 {\it MS Classification}: Primary: 60J60, 28A80; Secondary: 60J25.
\end{abstract}

\footnotetext{Research was supported by the National Science Center, Poland, grant no. 2015/17/B/ST1/01233 and by the Alexander von Humboldt Foundation, Germany.}

\maketitle

\baselineskip 0.5 cm

\bigskip\bigskip

\section{Introduction}
Stochastic processes on fractals are new a well-established part of probability theory. Rigorous definition of the Brownian motion on the Sierpi\'{n}ski gasket was given by Barlow and Perkins \cite{bib:BP}, and on nested fractals -- by Lindstr{\o}m \cite{bib:Lin}, Kusuoka \cite{bib:Kus2}, Kumagai \cite{bib:Kum}, Fukushima \cite{bib:Fuk1} and others.
  For a fair account of the theory of Brownian motion on simple nested fractals we refer to \cite{bib:Bar} and references therein. The Brownian motion on bounded nested fractals is unique up to a linear change of time (Barlow and Perkins \cite{bib:BP} for the gasket, Sabot \cite{bib:Sab} in the general case). Similar property is true also in the non-nested Sierpi\'{n}ski carpet, see \cite{bib:BBaKT}.
	
For the gasket, the initial definition of \cite{bib:BP} dealt with the process on the infinite set, but the subsequent papers were concerned rather with the process on a finite state-space. In general, it is a standard fact that the diffusion process on an infinite fractal $\mathcal{K}^{\left\langle \infty \right\rangle}:= \bigcup_{M=0}^{\infty} L^M \mathcal{K}^{\left\langle 0\right\rangle}$ can be constructed from the Brownian motion on its bounded counterpart $\mathcal{K}^{\left\langle 0\right\rangle}$ by means of Dirichlet forms \cite{bib:F}. In the present paper, motivated by further applications to fractal models of disordered media, we follow an opposite path: starting with a process on the infinite fractal, we construct a family
of processes on finite fractals $\mathcal K^{\langle M\rangle}=L^M\mathcal K^{\langle 0 \rangle}$. To this goal, we first find sharp geometric conditions on an unbounded planar simple nested fractal $\mathcal{K}^{\left\langle \infty \right\rangle}$ under which the canonical \emph{folding projection} of this set onto $\mathcal{K}^{\left\langle M \right\rangle}:= L^M \mathcal{K}^{\left\langle 0\right\rangle}$  is well defined for every $M \in \Zwithneg$. Then, given the Brownian motion on $\mathcal{K}^{\left\langle \infty \right\rangle}$, we  use this projection to construct an infinite-lifetime (conservative) diffusion process on the bounded fractal $\mathcal{K}^{\left\langle M \right\rangle}$ which we call the \emph{reflected Brownian motion on } $\mathcal{K}^{\left\langle M \right\rangle}$ .

Fractal sets serve as a useful description of the state-space in mathematical physics, percolation theory and crystalography. The existence of a conservative Markov process on a given compact set (in present setting: on a compact fractal) is crucial in many applications.
 Motivations for  this particular project come from a study of some random models with fractal state-spaces,  mainly the fractal counterpart of the so-called Parabolic Anderson Model (PAM), see \cite{bib:Kon-Wolff}) and related objects of spectral theory. In this spirit, the prominent example is the \emph{integrated density of states} (IDS) - one of the most important objects in the large-scale quantum mechanics (see \cite{bib:Car-Lac}).  In the models we are interested, one considers a massless particle which evolves in random environment on an unbounded fractal $\mathcal{K}^{\left\langle \infty \right\rangle}$. The randomness comes from the interaction with an external force field, described by its potential $V_{\omega}$. The motion of the particle itself is modeled by a Markov process which is stochastically independent of $V_{\omega}$. This leads us to the study of the Schr\"odinger-type random Hamiltonians $H_{\omega}= H_0+V_{\omega}$, where $H_0$ is the 'free' Hamiltonian describing the kinetic energy of the particle, and $V_{\omega}$ is the random multiplication operator representing the potential energy of the system (the evolution of such a system is then described by an appropriate one-parameter Feynman-Kac semigroup of operators with respect to the underlying Markov process on $\mathcal{K}^{\left\langle \infty \right\rangle}$). The spectral properties of such infinite-volume (i.e. defined with the whole fractal $\mathcal{K}^{\left\langle \infty \right\rangle}$) Schr\"odinger operators are usually difficult to handle (note that the spectrum of $H_{\omega}$ is typically not discrete). To overcome these obstacles, one needs to approximate the infinite-volume system by the finite-volume ones. More precisely, one first needs to constrain the system to finite-volume state spaces $\mathcal{K}^{\left\langle M \right\rangle}$, and then let $M \to \infty$. Since our input is fully probabilistic, such a plan requires a sequence of Markov processes on bounded fractals $\mathcal{K}^{\left\langle M \right\rangle}$ with infinite lifetime, with clearly
established relations between the processes on consecutive levels (i.e. on bounded fractals $\mathcal{K}^{\left\langle M \right\rangle}$ with increasing sizes). In order to make this plan feasible, these processes should be constructed from the initial process given on $\mathcal{K}^{\left\langle \infty \right\rangle}$.
While in regular, homogeneous, spaces (like $\mathbb R^d$) one can use just the usual projections of the infinite process onto tori (boxes) of increasing sizes (see e.g. \cite{bib:Szn1}), on fractals the situation is more delicate. Even in the case of planar Sierpi\'nski gasket such a naive projection would destroy the Markov property and further regularity properties of the resulting processes. This shows that on fractals a different approach is needed.

An alternative construction for the Sierpi\'{n}ski gasket in $\mathbb R^2$, leading to the reflected Brownian motion, was first proposed in
\cite{bib:KPP-PTRF}. Later, it was extended to the subordinate Brownian motions on the gasket and used in proving the existence and  asymptotic properties of the IDS for such processes in presence of the Poissonian random field \cite{bib:KaPP2, bib:KaPP}. We want to emphasize that this was done exactly along the approximation scheme described above and that the reflected process was indeed a key tool in these investigations (see e.g. the crucial monotonicity argument for the Feynman-Kac functionals, involving the periodized potentials, with respect to the reflected processes on $\mathcal{K}^{\left\langle M \right\rangle}$ in \cite[Th. 3.1]{bib:KaPP2} and \cite[Lem. 4.4-4.5]{bib:KaPP}, the trace estimates in \cite[Prop. 3.1 and Lem. 3.2]{bib:KaPP2}, and the weak scaling of eigenvalues in \cite[Lem 4.3]{bib:KaPP}).
 In the present paper, we generalize the construction from \cite{bib:KPP-PTRF} and prove the existence and further properties of the reflected Brownian motion on $\mathcal{K}^{\left\langle M \right\rangle}$ for a large class of planar simple nested fractals. Sharp estimates of the densities for such a process are given in the companion article \cite{bib:O}. Our present results will allow us to continue the research on the IDS for subordinate Brownian motions evolving in the presence of random potentials on planar nested fractals. This is a primary motivation for our investigations in this paper.

Our approach hinges on a clever labeling of the vertices of the infinite fractal, which we call `good'; fractals permitting for such a labeling are said to have the {\em Good Labeling Property, GLP} in short (Definitions \ref{def:glp} and \ref{def:glp_gen}). Not every planar fractal has GLP, e.g. the Lindstr{\o}m snowflake (Example \ref{ex:snow}) has not -- this is the reason why we exclude this set from our considerations. In Section \ref{subsec:glp} we present the concept of good labelling, and we give an easy-to-check sufficient condition for it to hold (Proposition \ref{pro:glp}). The example of the Lindstr{\o}m snowflake shows again that this condition is sharp.
It is worth mentioning that the GLP is a rather delicate property which essentially depends on the geometry of the fractal (Remark \ref{rem:rem_glp} and Proposition \ref{pro:uniquelabel}). It simplifies in the case of the planar Sierpi\'nski gasket. Note that our definition of the GLP makes sense thanks to the basic result, which says that the vertices of any complex in a simple nested fractal form a regular polygon (Proposition \ref{pro:plane}). Such a geometric property has been conjectured before by some experts in the field, but to the best of our knowledge, the formal proof of this fact was not known. The concept of GLP naturally leads to the 'folding' projection $\pi_M$ of order $M$ from the unbounded fractal $\mathcal{K}^{\left\langle \infty \right\rangle}$ onto $\mathcal{K}^{\left\langle M \right\rangle}$. Its definition and further properties are studied in Section \ref{subsec:proj}. In Section \ref{subsec:suf_cond_glp}, we review  various classes of planar simple nested fractals for which the GLP holds. We prove that all fractals whose building blocks are triangles or squares have the GLP (Theorem \ref{th:triangles}, Corollary \ref{coro:squares}). The same is true %when the number of essential fixed points of the fractal is a prime number  bigger than $3,$ or
if all fixed points are essential (%Theorems \ref{th:ess_prime} and
Theorem \ref{th:all_ess}). Moreover, we found a nice full geometric characterization of the GLP for the sets with an even number of essential fixed points (Theorem \ref{th:even_ess}). Note that this also fully explains why the Lindstr{\o}m snowflake is a negative example. All these results taken together show that the class of nested fractals having the GLP is very rich.

For fractals having the GLP, once the labeling is introduced and the projection is well defined, we can pass to the definition of the reflected Brownian motion and its properties (Section \ref{sec:reflected}). The reflected diffusion on $\mathcal{K}^{\left\langle M \right\rangle}$ is defined canonically as a `folding' projection of the `free' Brownian motion from $\mathcal{K}^{\left\langle \infty \right\rangle}$. Its measure is defined by a consistent family of finite dimensional distributions, which guarantees the existence of the corresponding stochastic process. The actual problem we address in the present paper is concerned with the regularity of this process. More precisely, we construct a version of the densities $g_M(t,x,y)$ for its one-dimensional distributions and show that in fact they define the transition probability densities for the process. We prove even more. In Theorem \ref{thm:main1} we obtain that $g_M(t,x,y)$ are symmetric functions in $(x, y)$, which satisfy the Chapman-Kolmogorov equations, have further continuity and boundedness properties, and define a Feller and strong Feller semigroup of operators. In consequence, the resulting reflected process is a symmetric strong Markov process having both Feller and strong Feller properties (Theorem \ref{thm:main2}).
Let us emphasize that all these regularity properties require a rather intricate definition of the densities $g_M(t,x,y)$. We found that the correct one is given by \eqref{eq:refldens}. This formula strongly depends on whether $y \in \mathcal{K}^{\left\langle M \right\rangle}$ is a vertex or not. In the first situation, it involves in an essential way the rank of points $y'$ from the fiber $\pi_M^{-1}(y)$ of $y \in \mathcal{K}^{\left\langle M \right\rangle}$ (by $\textrm{rank}(y')$ of a vertex $y'$ we understand the  number of $M$-complexes meeting at this point). This difficulty is the most critical point for our study. Indeed, due to the geometric properties of nested fractals, for any vertex $y',$  $\textrm{rank}(y') \in \left\{1,2,3\right\}$ and it can vary from point to point. For the unbounded one-sided Sierpi\'nski gasket, every vertex other than the origin has rank $2$, and so the situation is `homogeneous' and much simpler than the general one.
This also shows that our extension of the construction in \cite{bib:KPP-PTRF} to the general case of planar nested fractals is non-trivial and requires a substantial improvement of the previous argument.

The proof of the continuity of the functions $g_M(t,x,y)$ (Lemma \ref{lem:properties}) requires a careful analysis of the rank of vertices. However, the main difficulty occurs in the proof of Theorem \ref{thm:transition}, which is absolutely fundamental for our investigations and further applications, also outside of this paper. To overcome this difficulty, we have to track the joint distribution of the consecutive hitting times of the fractal $M$-grid for the `free' process and the labels of the vertices attained by the process at these hitting times (Lemma \ref{lem:lawseq}(1)). This is based on a delicate induction procedure. Another difficulty which arose while proving the various regularity properties of the densities $g_M(t,x,y)$ is of analytic type. In the case of Sierpi\'nski gasket, various integral estimates needed in proving the boundedness, continuity and symmetry properties of such functions were based on the property that any $m$-complex $\mathcal{K}^{\left\langle m \right\rangle}$ agrees with the Euclidean ball $B(0,2^m)$ restricted to the fractal and that the geodesic metric is Lipschitz equivalent with the Euclidean distance. In the general case of simple nested fractals this is no longer true (it even might happen that the geodesic metric cannot be defined at all!). To overcome this obstacle, we use a new idea which is based on an application of the graph metric (Appendix \ref{sec:app}). This approach works well in Lemmas \ref{lem:properties} and \ref{lem:symmetry}.

The paper is organized as follows. In Section 2 we collect essentials on the constructions
of planar simple nested fractals and definitions of related objects, and we prove the basic
geometric result in Proposition \ref{pro:plane}. We also introduce the definition of the graph metric.
In consecutive  subsections of Section 3 we introduce and discuss the concept of GLP and give the sharp
sufficient condition for it to hold (Proposition \ref{pro:glp}). We also define and discuss the properties
(Proposition \ref{thm:composition}) of the `folding' projections and give several direct-to-check sufficient conditions
for the GLP. In the case of fractals with even number of essential fixed points, we also give a full characterization of this
property (Theorem \ref{th:even_ess}). In Section 4 we recall the basic properties of the Brownian motion on an unbounded
simple nested fractal and define and prove further properties of the relected Brownian motion. The proof of our main
Theorem \ref{thm:main1} is postponed till the end this section and is preceded by a sequence of auxiliary lemmas.
The reader interested mostly in probabilistic development can skip the material of Section \ref{sec:labelling}
other than the definitions %and Theorem \ref{thm:composition}
and pass directly to Section \ref{sec:reflected}.
In the last section, Appendix \ref{sec:app}, we prove the comparability of the graph metric and the
Euclidean distance (Lemma \ref{lem:metrics}) and give several related results.

\section{Unbounded simple nested fractals} \label{sec:usnf}

The introductory part of this section follows the exposition of \cite{bib:Lin,bib:kpp-sausage,bib:kpp-sto}. Consider a collection of similitudes $\Psi_i : \mathbb{R}^2 \to \mathbb{R}^2$ with a common scaling factor $L>1,$  and a common isometry part $U,$ i.e. $\Psi_i(x) = (1/L) U(x) + \nu_i,$  where  $\nu_i \in \mathbb{R}^2$, $i \in \{1, ..., N\}.$ We shall assume $\nu_1 = 0$.
There exists a unique nonempty compact set $\mathcal{K}^{\left\langle 0\right\rangle}$ (called {\em the  fractal generated by the system} $(\Psi_i)_{i=1}^N$) such that $\mathcal{K}^{\left\langle 0\right\rangle} = \bigcup_{i=1}^{N} \Psi_i\left(\mathcal{K}^{\left\langle 0\right\rangle}\right)$.  As $L>1$, each similitude has exactly one fixed point and there are exactly $N$ fixed points of the transformations $\Psi_1, ..., \Psi_N$.

\begin{definition}[\textbf{Essential fixed points}]
A fixed point $x \in \mathcal{K}^{\left\langle 0\right\rangle}$ is an essential fixed point if there exists another fixed point $y \in \mathcal{K}^{\left\langle 0\right\rangle}$ and two different similitudes $\Psi_i$, $\Psi_j$ such that $\Psi_i(x)=\Psi_j(y)$.
The set of all essential fixed points for transformations $\Psi_1, ..., \Psi_N$ is denoted by $V_{0}^{\left\langle 0\right\rangle}$, let $k=\# V^{\left\langle 0\right\rangle}_{0}$.
\end{definition}

\begin{example}
The Vicsek fractal (Figure \ref{fig:essfix}) is constructed by 5 similitudes, four of them map the fractal onto complexes in the corners (let us denote them $\Psi_1, \Psi_2, \Psi_3, \Psi_4$) while $\Psi_5$ maps it onto the central complex. In this case the isometry $U$ is just the identity. The fixed points $v_i$ of the $\Psi_i'$s for $1\leq i \leq 4$ are essential fixed points. For example, the vertex $v_1$ is an essential fixed point, because $\Psi_5(v_1) = \Psi_1(v_3)$.
On the other hand, the fixed point of $\Psi_5$ (inside the central complex) is mapped onto points inside the complexes which do not coincide with the images of other vertices by any similitudes.
\end{example}

\begin{figure}[ht]
\centering
	\includegraphics[scale=1]{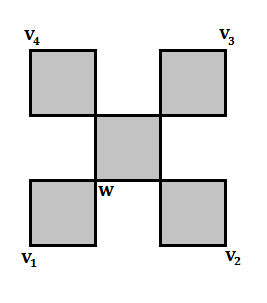}
\caption{Essential fixed points of the Vicsek fractal.}
\label{fig:essfix}
\end{figure}

The essential fixed points determine the general shape of complexes. In the example above the essential fixed points are the vertices of a square and each image of that square by some $\Psi_i$ (as in Figure \ref{fig:essfix}) contains a smaller copy of the fractal.

\begin{definition}[\textbf{Simple nested fractal}]
\label{def:snf}
 The fractal $\mathcal{K}^{\left\langle 0 \right\rangle}$ generated by the system $(\Psi_i)_{i=1}^N$ is called a \emph{simple nested fractal (SNF)} if the following conditions are met.
\begin{enumerate}
\item $\# V_{0}^{\left\langle 0\right\rangle} \geq 2.$
\item (Open Set Condition) There exists an open set $U \subset \mathbb{R}^2$ such that for $i\neq j$ one has\linebreak $\Psi_i (U) \cap \Psi_j (U)= \emptyset$ and $\bigcup_{i=1}^N \Psi_i (U) \subseteq U$.
\item (Nesting) $\Psi_i\left(\mathcal{K}^{\left\langle 0 \right\rangle}\right) \cap \Psi_j \left(\mathcal{K}^{\left\langle 0 \right\rangle}\right) = \Psi_i \left(V_{0}^{\left\langle 0\right\rangle}\right) \cap \Psi_j \left(V_{0}^{\left\langle 0\right\rangle}\right)$ for $i \neq j$.
\item (Symmetry) For $x,y \in V_{0}^{\left\langle 0\right\rangle},$ let $S_{x,y}$ denote the symmetry with respect to the line bisecting the segment $\left[x,y\right]$. Then
\begin{equation*}
\forall i \in \{1,...,M\} \ \forall x,y \in V_{0}^{\left\langle 0\right\rangle} \ \exists j \in \{1,...,M\} \ S_{x,y} \left( \Psi_i \left(V_{0}^{\left\langle 0\right\rangle} \right) \right) = \Psi_j \left(V_{0}^{\left\langle 0\right\rangle} \right).
\end{equation*}
\item (Connectivity) On the set $V_{-1}^{\left\langle 0\right\rangle}:= \bigcup_i \Psi_i \left(V_{0}^{\left\langle 0\right\rangle}\right)$ we define graph structure $E_{-1}$ as follows:\\
$(x,y) \in E_{-1}$ if and only if $x, y \in \Psi_i\left(\mathcal{K}^{\left\langle 0 \right\rangle}\right)$ for some $i$.\\
Then the graph $(V_{-1}^{\left\langle 0\right\rangle},E_{-1} )$ is required to be connected.
\end{enumerate}
\end{definition}

If  $\mathcal{K}^{\left\langle 0 \right\rangle}$ is a simple nested fractal, then we let
\begin{align} \label{eq:Kn}
\mathcal{K}^{\left\langle M\right\rangle} = L^M \mathcal{K}^{\left\langle 0\right\rangle}, \quad M \in \mathbb{Z},
\end{align}
and
\begin{align} \label{eq:Kinfty}
\mathcal{K}^{\left\langle \infty \right\rangle} = \bigcup_{M=0}^{\infty} \mathcal{K}^{\left\langle M\right\rangle}.
\end{align}
The set $\mathcal{K}^{\left\langle \infty \right\rangle}$ is the \textbf{unbounded simple nested fractal (USNF)} we shall be working with (see \cite{bib:kpp-sausage}).  Its fractal (Hausdorff) dimension is equal to  $d_f=\frac{\log N}{\log L}$. The Hausdorff  measure in dimension $d_f$ will be denoted by $\mu$. It will be normalized to have $\mu\left(\mathcal{K}^{\left\langle 0\right\rangle}\right)=1$. It serves as a `uniform' measure on $\mathcal{K}^{\left\langle \infty \right\rangle}.$

The remaining notions are collected in a single definition.
\begin{definition} Let $M\in\mathbb Z.$
\begin{itemize}
\item[(1)] $M$-complex: \label{def:Mcomplex}
every set $\Delta_M \subset \mathcal{K}^{\left\langle \infty \right\rangle}$ of the form
\begin{equation} \label{eq:Mcompl}
\Delta_M  = \mathcal{K}^{\left\langle M \right\rangle} + \nu_{\Delta_M},
\end{equation}
where $\nu_{\Delta_M}=\sum_{j=M+1}^{J} L^{j} \nu_{i_j},$ for some $J \geq M+1$, $\nu_{i_j} \in \left\{\nu_1,...,\nu_N\right\}$, is called an \emph{$M$-complex}.
%the sequence $I\left(\Delta_M\right) = \left(i_{M+1}, ..., i_{J}\right).$
\item[(2)] Vertices of the $M-$complex \eqref{eq:Mcompl}: the set $V\left(\Delta_M\right) =L^MV_0^{\langle 0 \rangle}+\nu_{\Delta_M}= L^{M} V^{\left\langle 0 \right\rangle}_0 + \sum_{j=M+1}^{J} L^{j} \nu_{i_j}$.
\item[(3)] Vertices of $\mathcal{K}^{\left\langle M \right\rangle}$:
$$
V^{\left\langle M\right\rangle}_{M} = V\left(\mathcal{K}^{\left\langle M \right\rangle}\right) = L^M V^{\left\langle 0\right\rangle}_{0}.
$$
\item[(4)] Vertices of all $M$-complexes inside a $(M+m)$-complex for $m>0$:
$$
V_M^{\langle M+m\rangle}= \bigcup_{i=1}^{N} V_M^{\langle M+m-1\rangle} + L^M \nu_i.
$$
\item[(5)] Vertices of all 0-complexes inside the unbounded nested fractal:
$$
V^{\left\langle \infty \right\rangle}_{0} = \bigcup_{M=0}^{\infty} V^{\left\langle M\right\rangle}_{0}.
$$
\item[(6)] Vertices of $M$-complexes from the unbounded fractal:
$$
V^{\left\langle \infty \right\rangle}_{M} = L^{M} V^{\left\langle \infty \right\rangle}_{0}
$$
\item[(7)] The set of all $M$-complexes from $\mathcal{K}^{\left\langle \infty \right\rangle}:$  denoted by  $\mathcal{T}_M$.
\item[(8)] The unique $M$-complex containing $x,$  $x \in \mathcal{K}^{\left\langle \infty \right\rangle}\backslash V^{\left\langle \infty \right\rangle}_{M}$,  is denoted by $\Delta_M (x)$ .
\end{itemize}
\end{definition}

\begin{figure}[ht]\label{fig:lindstrom}
\centering
	\includegraphics[scale=0.04]{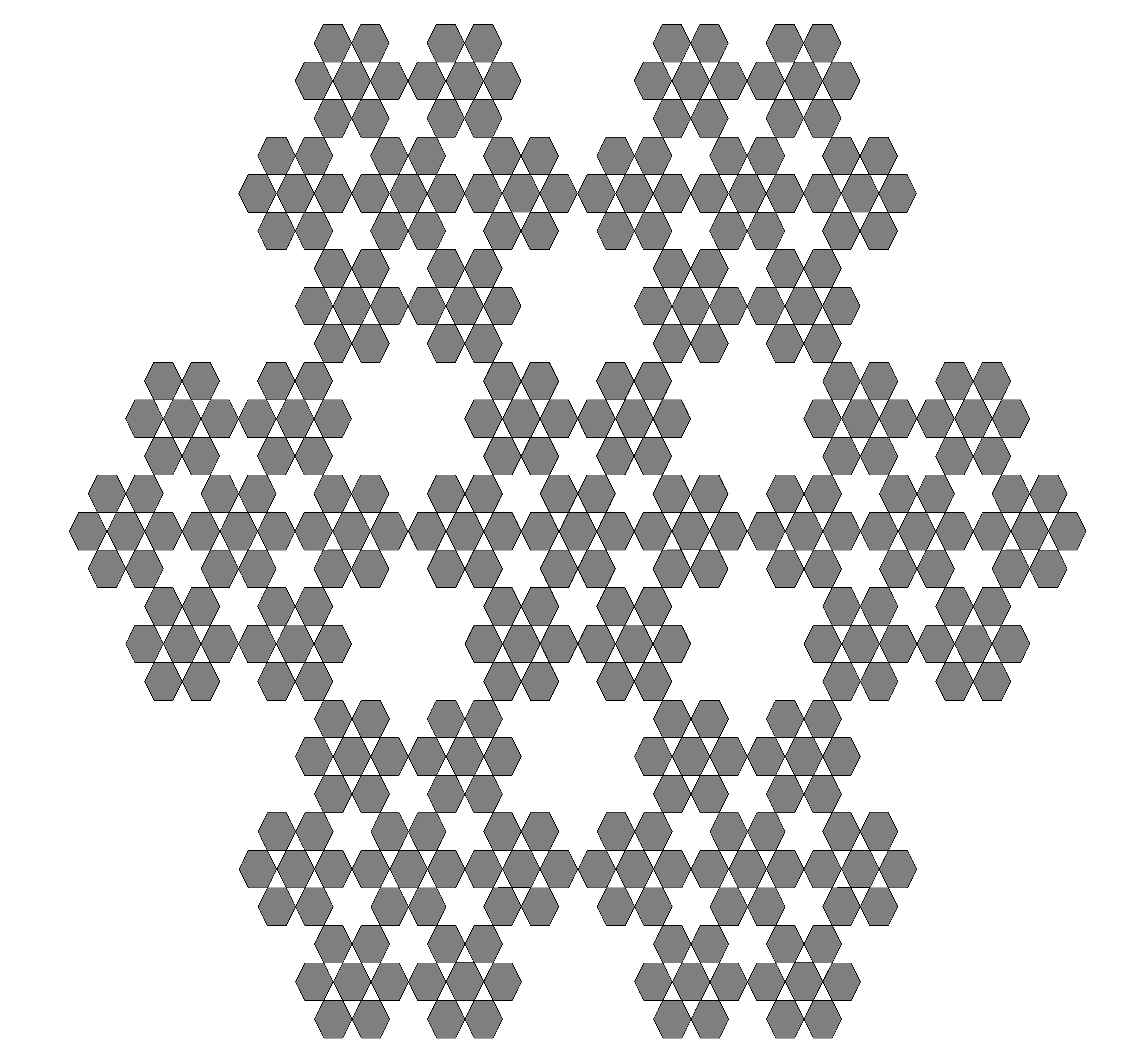}
\caption{An example of a nested fractal: the Lindstr\o m snowflake. It is constructed by 7 similitudes with $L=3$. It has 7 fixed points, but only 6 essential fixed points.}
\end{figure}

Below we need the following lemma.
\begin{lemma}{\cite[Lemma 3.14]{bib:Koch}}
\label{lem:koch}
Let $v \in V^{\left\langle 0 \right\rangle}_{0}$. Then there exist exactly one $i \in \{1,...,N\}$ such that $v \in \Psi_i \left( V^{\left\langle 0 \right\rangle}_{0}\right)$.
\end{lemma}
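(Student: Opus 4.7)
My plan is to split the statement into \textbf{existence}, which is immediate from the definition of essential fixed point, and \textbf{uniqueness}, which requires exploiting the rigid algebraic structure coming from the shared linear part of the similitudes.

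For existence: since $v \in V_0^{\langle 0\rangle}$, by the very definition of an essential fixed point there is a (necessarily unique) index $k \in \{1,\dots,N\}$ such that $\Psi_k(v) = v$. In particular $v = \Psi_k(v) \in \Psi_k(V_0^{\langle 0\rangle})$, so $i = k$ is a valid choice.

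For uniqueness I would argue by contradiction: suppose $v \in \Psi_i(V_0^{\langle 0\rangle}) \cap \Psi_j(V_0^{\langle 0\rangle})$ for some $i \neq j$, and write $v = \Psi_i(w_1) = \Psi_j(w_2)$ with $w_1, w_2 \in V_0^{\langle 0\rangle}$. Using that all $\Psi_m$ share the same linear part $\tfrac{1}{L}U$, subtracting the two expressions gives
\begin{equation*}
 w_1 - w_2 \;=\; L U^{-1}(\nu_j - \nu_i).
\end{equation*}
Let $k$ denote the unique index with $\Psi_k(v) = v$. Since the translations $\nu_1, \dots, \nu_N$ are pairwise distinct (different $\nu_m$ produce different fixed points through the formula $v_m = (I - \tfrac{1}{L}U)^{-1}\nu_m$), no two similitudes can share a fixed point, so at most one of $w_1, w_2$ can equal $v$. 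Assuming WLOG that $w_1 \neq v$ (hence $i \neq k$) and substituting $v = \Psi_k(v)$ into $v = \Psi_i(w_1)$, one obtains the explicit formula
\begin{equation*}
 w_1 \;=\; v + L U^{-1}(\nu_k - \nu_i).
\end{equation*}

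The main obstacle is then to rule out that this specific point $w_1$ belongs to $V_0^{\langle 0\rangle}$ for any $i \neq k$. The displacement $LU^{-1}(\nu_k - \nu_i)$ has Euclidean length $L|\nu_k - \nu_i|$, which — via the identity $\nu_k - \nu_i = (I - \tfrac{1}{L}U)(v_k - v_i)$ — is of the same order as the diameter of $\mathcal{K}^{\langle 0\rangle}$ itself. Since $v$ is an extreme point of the compact set $\mathcal{K}^{\langle 0\rangle}$, I expect that translating it by such a large vector in the direction $U^{-1}(\nu_k - \nu_i)$ places $w_1$ outside $\mathcal{K}^{\langle 0\rangle}$, and hence outside $V_0^{\langle 0\rangle}$. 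Making this rigorous is the heart of the proof; I would appeal to the Open Set Condition to control how the sub-complexes $\Psi_m(\mathcal K^{\langle 0\rangle})$ pack inside $\mathcal K^{\langle 0\rangle}$, and then combine it with the Symmetry axiom and the Nesting condition to conclude that the particular point $v + LU^{-1}(\nu_k - \nu_i)$ cannot coincide with any element of $V_0^{\langle 0\rangle}$ whenever $i \neq k$.
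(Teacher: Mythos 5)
There is a genuine gap, and you acknowledge it yourself: the entire content of the lemma is concentrated in the final step, namely showing that the explicit point $w_1 = v + LU^{-1}(\nu_k-\nu_i)$ cannot lie in $V_0^{\langle 0\rangle}$ when $i\neq k$, and you leave that step as a heuristic ("I expect that\dots", "making this rigorous is the heart of the proof") together with a list of axioms you would invoke. The existence half and the algebraic reduction are fine, but the magnitude argument you sketch cannot close the gap on its own: the displacement $LU^{-1}(\nu_k-\nu_i)$ has length $L|\nu_k-\nu_i|\geq (L-1)|v-v_i|$, which is merely comparable to the diameter of $V_0^{\langle 0\rangle}$ (for the Sierpi\'nski gasket, $L=2$, it equals exactly one side length), so one needs a directional argument (a supporting-hyperplane estimate at $v$), not a size argument.

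Two further problems with the route you propose. First, the extremality claim is shaky: $v$ need not be an extreme point of $\mathcal{K}^{\langle 0\rangle}$ (the paper's Figure of the $k=4$, $L=7$ example shows $\mathcal{K}^{\langle 0\rangle}$ need not even be contained in the convex hull of its vertices), and the fact that the points of $V_0^{\langle 0\rangle}$ are vertices of a regular polygon is Proposition \ref{pro:plane}, whose proof in the paper \emph{uses} Lemma \ref{lem:koch}; likewise $U=\Id$ is proved only afterwards. So any appeal to the polygon structure or to $U=\Id$ risks circularity and must be restricted to the part of Proposition \ref{pro:plane} that is independent of this lemma (the symmetry-axiom argument showing no essential fixed point lies in the interior of the hull). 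Second, the index $i$ ranges over all $N$ similitudes, so $v_i$ may be an \emph{inessential} fixed point lying outside the convex hull of $V_0^{\langle 0\rangle}$, in which case "translate $v$ away from $v_i$" need not leave that hull. Note also that the paper offers no proof to compare against: it imports the statement verbatim from \cite[Lemma 3.14]{bib:Koch}, so you are on your own here and the missing step really does have to be supplied in full.
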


Building blocks of simple nested fractals (`complexes') are regular polygons. This was first conjectured in \cite{bib:Bar, bib:Kig} (see also \cite[Rem. 1.2]{bib:GI}). We use this fact below in an essential way, and so to make the paper self-contained, we provide a proof of this property, based on Lemma \ref{lem:koch}.

\begin{proposition} We have the following.
\label{pro:plane}
\begin{enumerate}
\item
If $k \geq 3$, then points from $V_{0}^{\left\langle 0\right\rangle}$ are the vertices of a regular polygon.
\item
If $k=2$, then $\mathcal{K}^{\left\langle 0 \right\rangle}$ is just a segment connecting $x_1$ and $x_2$.
\end{enumerate}
\end{proposition}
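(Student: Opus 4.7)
For part (1) with $k \ge 3$, my plan has three stages: (i) show every $S_{x,y}$ with $x, y \in V_0^{\langle 0 \rangle}$ maps $V_0^{\langle 0 \rangle}$ onto itself; (ii) deduce that $V_0^{\langle 0 \rangle}$ lies on a common circle; (iii) deduce that the points of $V_0^{\langle 0 \rangle}$ are equally spaced, i.e.\ form the vertex set of a regular polygon. Stage (i) is the hardest step and is where Lemma~\ref{lem:koch} really earns its keep. The Symmetry axiom tells us only that $S_{x,y}$ permutes the blocks $\Psi_i(V_0^{\langle 0 \rangle})$, and $V_{-1}^{\langle 0 \rangle} \setminus V_0^{\langle 0 \rangle}$ can contain vertices that sit in a single block (the ``dangling'' vertices of a corner sub-complex in the Vicsek fractal, for instance), so the naive criterion ``vertex belonging to exactly one block'' does not isolate $V_0^{\langle 0 \rangle}$. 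I would instead isolate $V_0^{\langle 0 \rangle}$ via the fixed-point structure: for $v \in V_0^{\langle 0 \rangle}$, Lemma~\ref{lem:koch} provides the unique block $\Psi_m(V_0^{\langle 0 \rangle})$ containing $v$, and identifies $v$ as the essential fixed point of $\Psi_m$. I would then introduce the auxiliary map $T = \Psi_{\sigma(m)}^{-1} \circ S_{x,y} \circ \Psi_m$, which has scaling factor $1$ and therefore is a plane isometry permuting $V_0^{\langle 0 \rangle}$, and use $S_{x,y}^2 = \mathrm{id}$ together with a second application of Lemma~\ref{lem:koch} to $S_{x,y}(v)$ to conclude that $S_{x,y}(v)$ is the essential fixed point of $\Psi_{\sigma(m)}$.

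Once (i) is in hand, (ii) follows from a standard centroid argument: the centroid $c$ of $V_0^{\langle 0 \rangle}$ is fixed by every $S_{x,y}$, hence lies on every perpendicular bisector of a pair from $V_0^{\langle 0 \rangle}$, so $|c - x| = |c - y|$ for all $x, y \in V_0^{\langle 0 \rangle}$. For (iii), parametrising the points of $V_0^{\langle 0 \rangle}$ by their angles $\theta_1, \ldots, \theta_k$ on this common circle, the invariance under $S_{v_i, v_j}$ becomes closure of $A = \{\theta_1, \ldots, \theta_k\}$ under $\phi \mapsto \theta_i + \theta_j - \phi$ modulo $2\pi$. Taking $i = j$ shows that $A$ is symmetric about each of its own points; composing two such reflections yields translations by $2(\theta_i - \theta_j)$ stabilising the finite set $A$. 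These translations generate a finite cyclic subgroup of $\R/2\pi\Zwithneg$, which forces $A$ to be a coset of this subgroup, i.e.\ the vertex set of a regular polygon.

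For part (2), with $V_0^{\langle 0 \rangle} = \{x_1, x_2\}$ each complex $\Psi_i(\mathcal{K}^{\langle 0 \rangle})$ has only the two essential vertices $\Psi_i(x_1)$ and $\Psi_i(x_2)$. Nesting then forces two distinct complexes to meet in at most one point, and Connectivity forces the $N$ complexes to form a chain joining $x_1$ to $x_2$. The common isometry $U$ and self-similarity applied iteratively at every refinement level force this chain to lie on the straight segment $[x_1, x_2]$, and density of $\bigcup_M V_{-M}^{\langle 0 \rangle}$ in $\mathcal{K}^{\langle 0 \rangle}$ then yields $\mathcal{K}^{\langle 0 \rangle} = [x_1, x_2]$. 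The main obstacle throughout is the invariance step (i) of part (1); all remaining arguments reduce to standard plane geometry.
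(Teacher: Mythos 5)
Your three-stage plan for part (1) is a genuinely different architecture from the paper's (which puts the points in convex position via a convex-hull/barycentre argument for the figure $\mathcal{H}_1^{\langle 0\rangle}=\bigcup_i\Psi_i(\mathcal{H}_0^{\langle 0\rangle})$ and then compares consecutive edge lengths and angles using the symmetries between cyclically adjacent vertices), and stages (ii)--(iii) would indeed work. But stage (i) --- which you correctly identify as the crux --- is not established by the argument you sketch, and this is a genuine gap. The Symmetry axiom gives $S_{x,y}\bigl(\Psi_m(V_0^{\langle 0\rangle})\bigr)=\Psi_{\sigma(m)}(V_0^{\langle 0\rangle})$, so for $v\in V_0^{\langle 0\rangle}$ with unique block $\Psi_m$ you know $S_{x,y}(v)\in\Psi_{\sigma(m)}(V_0^{\langle 0\rangle})$. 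Your auxiliary isometry $T=\Psi_{\sigma(m)}^{-1}\circ S_{x,y}\circ\Psi_m$ merely repackages this: it shows $S_{x,y}(v)=\Psi_{\sigma(m)}(w)$ with $w=T(v)\in V_0^{\langle 0\rangle}$, i.e.\ that $S_{x,y}(v)$ is \emph{some} vertex of the block $\Psi_{\sigma(m)}$ --- possibly one of the dangling non-essential ones you yourself point out in the Vicsek example. The proposed ``second application of Lemma~\ref{lem:koch} to $S_{x,y}(v)$'' is circular, since the hypothesis of that lemma is membership in $V_0^{\langle 0\rangle}$, which for $S_{x,y}(v)$ is precisely what you are trying to prove. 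Nothing in the sketch identifies $w$ with $\Psi_{\sigma(m)}^{-1}$ of the fixed point of $\Psi_{\sigma(m)}$, nor shows that this fixed point is essential; and $S_{x,y}^2=\mathrm{id}$ only yields $\sigma^2=\mathrm{id}$, which adds nothing. The legitimate use of Lemma~\ref{lem:koch} here --- and the one the paper actually makes --- is at the \emph{defining} pair of the reflection: since $S_{x,y}(x)=y$ and $y$ lies in exactly one block, the block of $x$ must be sent to the block of $y$. For a third vertex $v\notin\{x,y\}$ you have no such anchor, and the invariance $S_{x,y}(V_0^{\langle 0\rangle})=V_0^{\langle 0\rangle}$, while true a posteriori, needs a different argument (the paper sidesteps it entirely by working with the set $\mathcal{H}_1^{\langle 0\rangle}$, whose invariance under $S_{x,y}$ is immediate, and extracting the common fixed point of all the reflections from that).

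Two smaller points. In stage (iii) you take $i=j$, but $S_{v_i,v_i}$ is not defined (the axiom only provides reflections for distinct pairs); this is harmless, since composing $S_{v_i,v_j}$ with $S_{v_i,v_l}$ already produces the translations $\phi\mapsto\phi+(\theta_l-\theta_j)$ stabilising the finite angle set, which is all you need. Part (2) is acceptable at the level of detail of the paper's own argument, though ``the common isometry $U$ and self-similarity force the chain onto $[x_1,x_2]$'' compresses the step where rotations other than by $\pi$ or $2\pi$ are excluded and Cantor-type gaps are ruled out via connectivity. If you can close stage (i), your route has the merit of avoiding the paper's unproved assertion that no three essential fixed points are collinear; as it stands, the proof is incomplete at its central step.
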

\begin{proof}
(1)
Let us denote the convex hull of $V_{0}^{\left\langle 0\right\rangle}$ by $\mathcal{H}_{0}^{\left\langle 0\right\rangle}$ and let $\mathcal{H}_{1}^{\left\langle 0\right\rangle} = \bigcup_{i=1}^{N} \Psi_i\left(\mathcal{H}_{0}^{\left\langle 0\right\rangle}\right)$. Then $\mathcal{H}_{0}^{\left\langle 0\right\rangle}$ is a polygon with vertices in some points of $V_{0}^{\left\langle 0\right\rangle}$. We will show that no vertex from $V_{0}^{\left\langle 0\right\rangle}$ lies in the interior of this figure.

According to the symmetry condition for nested fractals, no three essential fixed points are collinear. Also, for every $x_i,x_j \in V_{0}^{\left\langle 0\right\rangle}$ the line bisecting the segment $\left[x_i,x_j\right]$ is an axis of symmetry of $\mathcal{H}_{1}^{\left\langle 0\right\rangle}$. All axes of symmetry of a figure intersect at a single point $P$ -- its barycenter, and (since $\mathcal{H}_{0}^{\left\langle 0\right\rangle}$ is convex) $P$ lies inside $\mathcal{H}_{0}^{\left\langle 0\right\rangle}$. Moreover, if there were a point $x_i$ in $V_{0}^{\left\langle 0\right\rangle}$ lying in the interior of $\mathcal{H}_{0}^{\left\langle 0\right\rangle}$, then we could have picked a vertex $x_j$ of $\mathcal{H}_{0}^{\left\langle 0\right\rangle}$ making the angle $\angle x_j x_i P$  obtuse. Then the line bisecting $\left[x_i,x_j\right]$ would have been an axis of symmetry of $\mathcal{H}_{1}^{\left\langle 0\right\rangle}$ not containing the point $P,$ a contradiction. Therefore all vertices of $V_{0}^{\left\langle 0\right\rangle}$ are  vertices of the polygon $\mathcal{H}_{0}^{\left\langle 0\right\rangle}$.

Next, we  label the vertices $x_1, x_2, ..., x_n$ in such an order that segments $\left[x_1 x_2\right], \left[x_2 x_3\right], ..., \left[x_n x_1\right]$ are the edges of the polygon $\mathcal{H}_{0}^{\left\langle 0\right\rangle}.$ For simplicity  we can assume that $x_i$ is the fixed point of $\Psi_i$. Then the symmetry $S_{1,3}$ in the line bisecting the segment $\left[x_1 x_3\right]$ transports $x_1=\Psi_1(x_1)$ to $x_3=\Psi_3(x_3)$.
From Lemma \ref{lem:koch} we see that $\Psi_{1} \left(\mathcal{K}^{\left\langle 0 \right\rangle}\right)$ and $\Psi_{3} \left(\mathcal{K}^{\left\langle 0 \right\rangle}\right)$ are the only complexes containing $x_1$ and $x_3$ respectively.

The symmetry condition gives that the image of $\Psi_{1}\left(V_{0}^{\left\langle 0\right\rangle}\right)$ is $\Psi_{3}\left(V_{0}^{\left\langle 0\right\rangle}\right)$. As all the similitudes are based on a common isometry $U$, the images of $x_1,...,x_k$ by all the similitudes are either placed clockwise, or they all are placed counter-clockwise. In any case, the points $\Psi_1\left(x_2\right)$ and $\Psi_3\left(x_2\right)$ are located on the same side of the segment $[x_1,x_3]$.
Finally, as $S_{1,3} \left(\Psi_{1}\left(V_{0}^{\left\langle 0\right\rangle}\right)\right) = \Psi_{3}\left(V_{0}^{\left\langle 0\right\rangle}\right)$, $\Psi_1\left(x_2\right)$ is adjacent to $\Psi_1\left(x_1\right)$, and $\Psi_3\left(x_2\right)$ is adjacent to $\Psi_3\left(x_3\right)$, we conclude that $S_{1,3} \left(\Psi_1\left(x_2\right)\right) = \Psi_3\left(x_2\right)$. In the next step we see that since the  segments $[\Psi_1\left(x_1\right), \Psi_1\left(x_2\right)]$ and $[\Psi_3\left(x_3\right), \Psi_3\left(x_2\right)]$ are symmetric to each other, they have  equal length, and  consequently  $\left|[x_1,x_2]\right|=\left|[x_2, x_3]\right|$ as well.

By repeating this reasoning we find that all edges of the polygon have the same length. Similarly, we can show that all angles in the polygon have the same measure.

The symmetry $S_{2,3}$ in the line bisecting the segment $[x_2,x_3]$ transports $x_2=\Psi_2(x_2)$ to $x_3=\Psi_3(x_3)$. Again, from the symmetry condition the image of $\Psi_{2}\left(V_{0}^{\left\langle 0\right\rangle}\right)$ is $\Psi_{3}\left(V_{0}^{\left\langle 0\right\rangle}\right)$.
Consequently, it follows that $S_{2,3} \left(\Psi_2\left(x_3\right)\right) = \Psi_3\left(x_2\right)$ and $S_{2,3} \left(\Psi_2\left(x_1\right)\right) = \Psi_3\left(x_4\right)$. In the final step we see that the equality of angles at the vertices $\Psi_2\left(x_2\right)$ and $\Psi_3\left(x_3\right)$ gives  the equality of angles $\angle x_1x_2x_3$ and $\angle x_2x_3x_4$.
The proof for $k\geq 3$ is completed.

(2) Let now $k=2.$ Then $\mathcal{H}_{0}^{\left\langle 0\right\rangle}$ is a line segment, and all
its images in the mappings $\Psi_i$ are parallel. Connectivity of the graph $(V_{-1}^{\left\langle 0\right\rangle},E_{-1} )$ implies that they are also parallel to $\mathcal{H}_{0}^{\left\langle 0 \right\rangle}$.
This means that $U$ is either the identity, or the symmetry in the line perpendicular to $\left[x_1,x_2\right]$.

Indeed, it is impossible to construct a polygonal chain connecting $x_1$ and $x_2$ using parallel segments which would not be parallel to the segment $[x_1, x_2]$. Therefore we have to rule out all isometries which are based on rotations (different than those by angle $\pi$ or $2\pi$).

The connectivity of the graph $(V_{-1}^{\left\langle 0\right\rangle},E_{-1} )$ and Lemma \ref{lem:koch} show that there are no points from $\mathcal{K}^{\left\langle 0 \right\rangle}$ outside of $[x_1,x_2]$ and that $U$ cannot be the symmetry in the line bisecting $[x_1, x_2]$. Using the connectivity again we see that $\mathcal{K}^{\left\langle 0 \right\rangle}$ is equal to $[x_1,x_2]$ (we reject Cantor-type sets).

\end{proof}
 Along the way we concluded that when $k=2,$ then  the isometry $U$ is the identity or a translation by some vector $\nu$. As we have previously assumed that $\nu_1 = 0$, in fact we have $U=\Id$.
Below we prove that this property is true for all simple nested fractals.

From now on we shall assume that $k \geq 3$, because for $k=2$ the fractal becomes trivial.

\begin{proposition}
For simple nested fractals the defining isometry $U$ is the identity mapping.
\end{proposition}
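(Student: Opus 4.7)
The plan is to show that, after an innocuous coordinate adjustment, $U$ is a linear isometry with $x_1 = 0$, and then to exploit the regular $k$-gon structure of $V_0^{\langle 0 \rangle}$ (from Proposition \ref{pro:plane}) together with the Symmetry and Connectivity conditions of SNF to pin down $U$ on two linearly independent vectors. First I would write $U(x) = U_0(x) + b$ with $U_0$ linear and absorb $b/L$ into each $\nu_i$, so that we may assume $U = U_0$ is linear; the assumption $\nu_1 = 0$ and the fixed-point equation $x_1 = (1/L) U(x_1)$, together with $|U(x_1)| = |x_1|$ and $L > 1$, then force $x_1 = 0$.

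The core argument applies the Symmetry condition at $x_1$. Let $c$ denote the center of the regular $k$-gon $V_0^{\langle 0 \rangle}$ and let $x_2, x_k$ be the two neighbors of $x_1$ in the polygon. The reflection $S_{x_2, x_k}$ in the perpendicular bisector of $[x_2, x_k]$ fixes $x_1$, so by the Symmetry condition and Lemma \ref{lem:koch} it must map $\Psi_1(V_0^{\langle 0 \rangle})$ to itself. Since $\Psi_1(V_0^{\langle 0 \rangle}) = (1/L) U(V_0^{\langle 0 \rangle})$ is a regular $k$-gon centered at $(1/L) U(c)$, the axis of $S_{x_2, x_k}$, namely the line through $0$ and $c$, must pass through this center, which gives $U(c) = \pm c$. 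The case $U(c) = -c$ would place the complex $\Psi_1(\mathcal{K}^{\langle 0 \rangle})$ on the side of $x_1$ opposite to $c$, and applying the same analysis at every corner would make each $\Psi_i(\mathcal{K}^{\langle 0 \rangle})$ bulge outward. The outward-oriented complexes would then sit in disjoint exterior regions, so by Lemma \ref{lem:koch} the sets $\Psi_i(V_0^{\langle 0 \rangle})$ would be pairwise disjoint, contradicting the Connectivity condition. Therefore $U(c) = c$.

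Repeating the same symmetry argument at corner $x_2$ with the reflection in the perpendicular bisector of $[x_1, x_3]$ yields $U(c - x_2) = \pm (c - x_2)$, and Connectivity again rules out the minus sign, so $U(c - x_2) = c - x_2$. By linearity, $U(x_2) = U(c) - U(c - x_2) = x_2$. Since $c$ and $x_2$ are linearly independent for $k \geq 3$ (as $x_2$ does not lie on the axis of symmetry through $x_1 = 0$ and $c$), the linear isometry $U$ fixes two linearly independent vectors and is therefore the identity. The main obstacle will be the rigorous exclusion $U(c) \neq -c$: one must verify that the outward-oriented small $k$-gons at distinct corners cannot by coincidence share a vertex, which requires a careful analysis combining the regular $k$-gon geometry, the scaling constraint $L > 1$, and the uniqueness statement of Lemma \ref{lem:koch}.
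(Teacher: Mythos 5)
Your first two steps are sound and take a genuinely different route from the paper: you reflect in the perpendicular bisector of $[x_2,x_k]$, which \emph{fixes} the corner $x_1$, so that the Symmetry condition plus Lemma \ref{lem:koch} force $S_{x_2,x_k}$ to map $\Psi_1\left(V_0^{\langle 0\rangle}\right)$ to itself and you obtain $U(c)=\pm c$; the paper instead reflects in the bisector of the edge $[x_1,x_2]$, which \emph{swaps} the two corner complexes, and compares their alignments. The price of your choice is that the entire content of the proposition gets concentrated in excluding the minus sign, and that exclusion is where your argument has a genuine gap. Your proposed contradiction is that the outward-oriented corner polygons ``sit in disjoint exterior regions,'' so the sets $\Psi_i\left(V_0^{\langle 0\rangle}\right)$ are pairwise disjoint and Connectivity fails. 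But Connectivity is a statement about the graph built from \emph{all} $N$ sets $\Psi_1\left(V_0^{\langle 0\rangle}\right),\dots,\Psi_N\left(V_0^{\langle 0\rangle}\right)$, and when $N>k$ your analysis says nothing about where the $N-k$ complexes attached to inessential fixed points lie (think of the central complex of the Vicsek fractal). Those complexes could share vertices with each of the outward corner complexes and render the graph connected, so pairwise disjointness of the $k$ corner complexes alone yields no contradiction. Nor can you confine the remaining complexes to $\mathcal{H}_0^{\langle 0\rangle}$ to prevent this, since by the paper's own Figure \ref{fig:notconvex} a complex need not lie in the convex hull of its vertices.

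Two further difficulties sit inside the same step. First, ``disjoint exterior regions'' is itself unproved: for $k\geq 5$ the vertical cones at adjacent vertices of a regular $k$-gon overlap away from the polygon (their half-opening $(k-2)\pi/(2k)$ exceeds half the exterior angle), so disjointness of the small outward $k$-gons genuinely requires the quantitative use of $L>1$ that you defer. Second, the minus sign must be excluded independently at the second corner: for $k=4$ the orthogonal map fixing $c$ and negating $c-x_2$ (reflection in the diagonal through $x_1$) has $U(c)=c$, and under it only two of the four corner complexes bulge outward, so the ``all corners point outward, hence the picture disconnects'' heuristic is unavailable even in spirit there. Until the minus signs are eliminated by an argument that genuinely engages the Nesting/Connectivity axioms --- or by an alignment comparison between two \emph{different} corner complexes under the edge reflection $S_{x_1,x_2}$, as in the paper --- the proof is not complete.
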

\begin{proof}
By Proposition \ref{pro:plane} we know that $\mathcal{H}_{1}^{\left\langle 0\right\rangle}$ is composed of regular polygons connected at their vertices.

Take two neighboring essential fixed points $x_1$, $x_2$ (as in the proof of Proposition \ref{pro:plane}). Let $i\neq j$ be indices such that $\Psi_i\left(x_1\right)=x_1$, $\Psi_j\left(x_2\right)=x_2$. Without loss of generality $i=1, j=2.$ Then the line bisecting the segment $\left[x_1, x_2\right]$ is an axis of symmetry of $\mathcal{H}_{1}^{\left\langle 0\right\rangle}$. The image of $\Psi_1\left(\mathcal{H}_{0}^{\left\langle 0\right\rangle}\right)$ in this symmetry is $\Psi_2\left(\mathcal{H}_{0}^{\left\langle 0\right\rangle}\right)$.   If $U$ were a rotation, or a symmetry in a line not parallel to the line bisecting $\left[x_1, x_2\right]$, then these two figures would have different alignment, giving  a contradiction. Remaining options are  that $U$ is the identity, or the symmetry in the line bisecting $\left[x_1, x_2\right]$ (additional translation is not permited since $\nu_1=0$).

Take now an essential fixed point $x_3,$ a neighbor of  $x_2,$ then copy the reasoning above for the segment $\left[x_2,x_3\right]$ to conclude that $U$ is the identity or the symmetry in the line bisecting $\left[x_2,x_3\right]$. As the lines bisecting $[x_1,x_2]$ and $[x_2,x_3]$ are not parallel, we conclude that $U=\Id.$
\end{proof}

We now introduce the '$M$-graph' distance on $\mathcal K^{\langle \infty\rangle} \times \mathcal K^{\langle \infty\rangle}$, which will be needed in the next section.

\begin{definition}\label{def:graph-distance}
 For $M \in \mathbb Z$ and $x,y \in \mathcal{K}^{\left\langle \infty \right\rangle}$ let
\begin{equation}
\label{def:graphmetric}
d_M (x,y):= \left\{ \begin{array}{ll}
0, & \textrm{if } x=y ;\\
1, & \textrm{if there exists } \Delta_M \in \mathcal{T}_M \textrm{ such that } x,y \in \Delta_M ;\\
n>1, & \textrm{if there does not exist } \Delta_M \in \mathcal{T}_M \textrm{ such that } x,y \in \Delta_M \textrm{ and } n \textrm{ is the smallest}\\
&  \textrm{number for which there exist } \Delta_M^{(1)}, \Delta_M^{(2)}, ..., \Delta_M^{(n)} \in \mathcal{T}_M \textrm{ such that } x \in \Delta_M^{(1)},\\
&  y \in \Delta_M^{(n)} \textrm{ and } \Delta_M^{(i)} \cap \Delta_M^{(i+1)} \neq \emptyset \textrm{ for  }1 \leq i \leq n-1.
\end{array} \right.
\end{equation}
Moreover, for a fixed $x \in \mathcal{K}^{\left\langle \infty \right\rangle}$ we define inductively the collection of $M-$complexes `lying at distance $n$ from a given point $x$':
\begin{equation}
\label{eq:xComplexdist}
\begin{split}
\mathcal{L}_{M,1,x} & = \left\{\Delta_M \in \mathcal{T}_M : x \in \Delta_M \right\};\\
\mathcal{L}_{M,n+1,x} & = \left\{\Delta_M \in \mathcal{T}_M \backslash \bigcup_{i=1}^{n} \mathcal{L}_{M,i,x} : \exists \widetilde{\Delta}_M \in \mathcal{L}_{M,n,x} \ \widetilde{\Delta}_M \cap \Delta_M \neq \emptyset \right\}, \quad n \geq 1.
\end{split}
\end{equation}
\end{definition}
Equivalently,
\begin{equation*}
\mathcal{L}_{M,n,x} = \left\{\Delta_M \in \mathcal{T}_M : \sup_{y\in \Delta_M} d_M (x,y)=n \right\}, \quad n \geq 1.
\end{equation*}
Further properties of the graph distance and the upper estimate for the cardinality of the families $\mathcal{L}_{M,n,x}$ are given in the Appendix \ref{sec:app}.

\section{Good labelling and projections}\label{sec:labelling}
In this section we present the notion of good labeling. Good labeling gives rise to the `folding' projection $\pi_M: \mathcal K^{\langle \infty\rangle}\to \mathcal K^{\langle M\rangle},$ and this projection will be in the next section used to define the reflected Brownian motion on $\mathcal K^{\langle M\rangle}.$

\subsection{The concept of good labelling of vertices} \label{subsec:glp}

  Consider the alphabet of $k$ symbols $\cA:=\left\{a_1, a_2,a_3,...,a_k\right\}$,  where  $ k=\# V^{\left\langle 0\right\rangle}_{0}\geq 3.$ The elements of $\cA$ are called labels.
 \begin{definition}\label{def:labeling}
  Let $M \in \mathbb{Z}$. A \emph{labelling function of order $M$} is any map $l_M: V^{\left\langle \infty \right\rangle}_{M} \to \cA.$
  \end{definition}

We now introduce the concept of good labelling of vertices of USNFs, which generalizes the labelling procedure proposed in \cite{bib:KPP-PTRF, bib:KaPP2} in the case of unbounded Sierpi\'nski triangle.
Recall that (Proposition \ref{pro:plane}) every $M$-complex $\Delta_M$  is a regular polygon with $k$ vertices. In particular, there exist exactly $k$ different rotations around the barycenter of $\mathcal K^{\langle M \rangle}$,  mapping $V^{\left\langle M \right\rangle}_{M}$ onto $V^{\left\langle M \right\rangle}_{M}.$ They will be denoted by  $\{R_1, ..., R_k\}=: \mathcal{R}_M $ (ordered in such a way that for $i=1,2,...,k,$ the rotation $R_i$ rotates by angle $\frac{2\pi i}{k})$.
\begin{definition}[\textbf{Good labelling function of order $M$}]
\label{def:glp} Let $M \in \mathbb{Z}$.  A function $\ell_M: V^{\left\langle \infty \right\rangle}_{M} \to \cA$  is called a \emph{good labelling function of order $M$} if the following conditions are met.
\begin{itemize}
\item[(1)] The restriction of $\ell_M$ to $V^{\left\langle M \right\rangle}_{M}$ is a bijection onto $\cA$.
\item[(2)] For every $M$-complex $\Delta_M$ represented as
$$
\Delta_M  = \mathcal{K}^{\left\langle M \right\rangle} +\nu_{\Delta_M},
$$
where $\nu_{\Delta_M}=  \sum_{j=M+1}^{J} L^{j} \nu_{i_j},$  with some $J \geq M+1$ and $\nu_{i_j} \in \left\{\nu_1,...,\nu_N\right\}$ (cf. Def. \ref{def:Mcomplex}), there exists a rotation $R_{\Delta_M} \in \mathcal{R}_M$ such that
\begin{align}\label{eq:rot}
\ell_M(v)=\ell_M\left(R_{\Delta_M}\left(v -\nu_{\Delta_M}\right)\right) , \quad v \in V\left(\Delta_M\right).
\end{align}
\end{itemize}
%Function $\ell_M$ satisfying (1) and (2) is called a {\em good labeling function}.
\end{definition}
 An USNF $\mathcal{K}^{\left\langle \infty \right\rangle}$ is said to have the  \emph{good labelling property of order $M$} if a good labelling function of order $M$ exists. Note that, in fact, for every $M$-complex $\Delta_M$  the restriction of a good labelling function to $V\left(\Delta_M\right)$ is a bijection onto $\cA$.

\bigskip

The good labeling property means that the rotation of $R_{\Delta_M}$ applied to $\Delta_M-\nu_{\Delta_M}$ maps the vertices of $\Delta_M$ onto vertices of $V_M^{\left\langle M\right\rangle}$ with matching labels.

Thanks to the selfsimilar structure of $\mathcal{K}^{\left\langle \infty \right\rangle},$ the \emph{good labelling property of order $M$} for some $M \in \mathbb{Z}$ is equivalent to this property of any other order $\widetilde{M} \in \mathbb{Z}$. This gives rise to the following general definition.

\begin{definition}[\textbf{Good labelling property}] \label{def:glp_gen} An USNF $\mathcal{K}^{\left\langle \infty \right\rangle}$ is said to have the \emph{good labelling property (GLP in short)} if it has the {good labelling property of order $M$} for some $M \in \mathbb{Z}$.
\end{definition}

\begin{remark} \label{rem:rem_glp}
\noindent {\rm
If $l_{M}$ is a good labelling function of order $M$ on $V_M^{\langle\infty\rangle}$, then it is typically not true that restricting $l_M$ to $V^{\left\langle \infty \right\rangle}_{M+1}$  gives  GLP of order $M+1$ (see Figure \ref{fig:levels}). This contrasts the case of the unbounded Sierpi\'nski gasket (see e.g. \cite{bib:KaPP2}), where a good labelling function of order $M$ automatically provided GLP of every order $\widetilde M \geq M$.

\begin{figure}[ht]
\centering
	\includegraphics[scale=0.3]{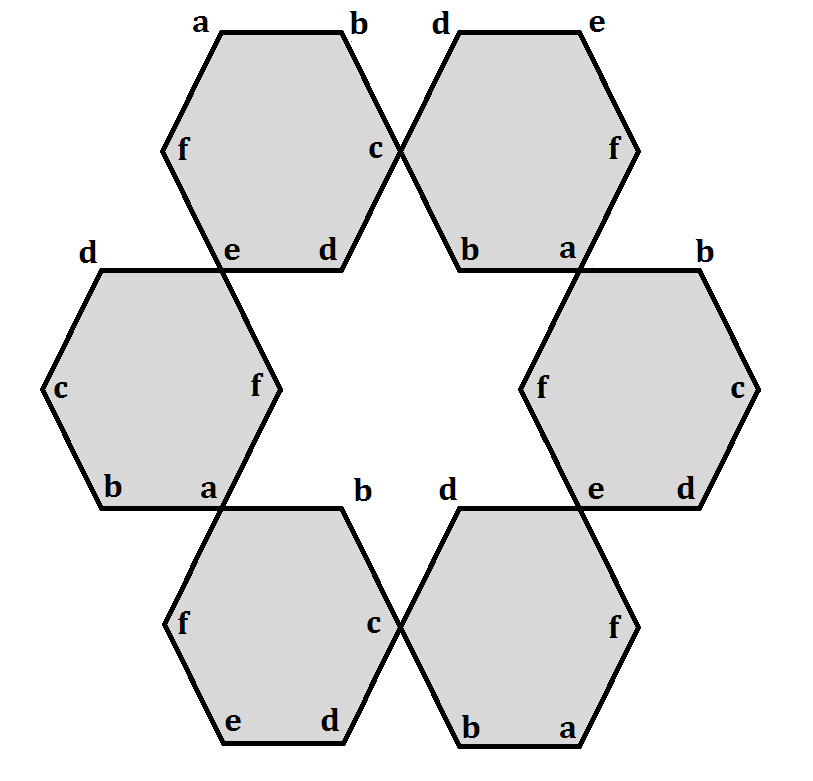}
\caption{Labelling of vertices of order $M$ with labels $a,b,c,d,e,f$ inside a hexagonal $(M+1)$-complex. The $(M+1)$-complex has two vertices with labels $a$, $c$, $e$ and none with $b$, $d$ or $f$.}
\label{fig:levels}
\end{figure}
}
\end{remark}

\begin{proposition} \label{pro:uniquelabel}
For USNF's with the GLP, for any $M\in \mathbb Z$ the good labeling of order $M$ is unique up to a permutation of the alphabet set $\cA.$ In particular,
if $\mathcal{K}^{\left\langle \infty \right\rangle}$ has GLP and
a bijection $\widetilde{\ell}_M: V^{\left\langle M \right\rangle}_{M} \to \mathcal{A}$ is given, then there exists a unique
good labeling function $\ell_M:V_M^{\langle \infty\rangle} \to \cA$ such that $\ell_M|_{V_{M}^{\langle M\rangle}}=\widetilde \ell_M.$
\end{proposition}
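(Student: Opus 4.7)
My plan is to reduce the full statement to the ``in particular'' part about extending $\widetilde \ell_M$, and treat that part via induction on the graph distance $d_M$ from the central $M$-complex $\mathcal K^{\langle M\rangle}$.

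For existence of an extension of $\widetilde \ell_M$, I would invoke GLP to produce some good labeling $\ell_M'$ of order $M$. By condition (1) of Definition \ref{def:glp}, the restriction $\ell_M'|_{V_M^{\langle M\rangle}}$ is a bijection to $\cA$, so $\sigma := \widetilde\ell_M\circ (\ell_M'|_{V_M^{\langle M\rangle}})^{-1}$ is a well-defined permutation of $\cA$. The composition $\ell_M := \sigma\circ \ell_M'$ still satisfies (1) and (2) (with the same rotations $R_{\Delta_M}$, since $\sigma$ is applied uniformly) and agrees with $\widetilde\ell_M$ on $V_M^{\langle M\rangle}$.

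For uniqueness of the extension, assume $\ell_M^{(1)}$ and $\ell_M^{(2)}$ are good labelings of order $M$ agreeing on $V_M^{\langle M\rangle}$. I would proceed by induction on $n$, showing that they agree on every vertex of every $M$-complex $\Delta_M \in \mathcal{L}_{M,n,\mathbf{0}}$ (with base point the barycenter of $\mathcal K^{\langle M\rangle}$). The base $n=1$ is just $\Delta_M = \mathcal K^{\langle M\rangle}$. For the inductive step, a complex $\Delta_M$ at distance $n+1$ intersects some $\Delta_M'$ at distance $n$, and by the nesting condition (rescaled to level $M$) this intersection is a subset of vertices, hence contains at least one common vertex $v_0$. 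The inductive hypothesis gives $\ell_M^{(1)}(v_0)=\ell_M^{(2)}(v_0)$. Condition (2) applied to $\Delta_M$ provides rotations $R^{(1)},R^{(2)}\in \mathcal R_M$ with
\begin{equation*}
\ell_M^{(j)}(v_0)=\ell_M^{(j)}\bigl(R^{(j)}(v_0-\nu_{\Delta_M})\bigr),\qquad j=1,2.
\end{equation*}
Since $\ell_M^{(1)}$ and $\ell_M^{(2)}$ agree on $V_M^{\langle M\rangle}$ and their restrictions there are bijections to $\cA$, the two points $R^{(j)}(v_0-\nu_{\Delta_M})\in V_M^{\langle M\rangle}$ must coincide. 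Now the key algebraic fact kicks in: by Proposition \ref{pro:plane}, $V_M^{\langle M\rangle}$ is the vertex set of a regular $k$-gon, and $\mathcal R_M$ is the cyclic group of $k$ rotations that acts freely on this vertex set, so agreement of $R^{(1)}$ and $R^{(2)}$ at a single vertex forces $R^{(1)}=R^{(2)}$. Then (\ref{eq:rot}) and the agreement of the two labelings on $V_M^{\langle M\rangle}$ give $\ell_M^{(1)}(v)=\ell_M^{(2)}(v)$ for every $v\in V(\Delta_M)$, completing the induction. Since connectivity propagated across levels shows that every $M$-complex lies at finite $d_M$-distance from $\mathcal K^{\langle M\rangle}$, this proves uniqueness of the extension on all of $V_M^{\langle \infty\rangle}$.

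The ``up to permutation'' part then follows at once: if $\ell_M$ and $\ell_M'$ are any two good labelings of order $M$, set $\sigma := \ell_M\circ (\ell_M'|_{V_M^{\langle M\rangle}})^{-1}$; then $\sigma\circ \ell_M'$ and $\ell_M$ are both good labelings extending the same bijection on $V_M^{\langle M\rangle}$, so they coincide by the uniqueness just proved. The main obstacle I anticipate is justifying cleanly that two intersecting $M$-complexes meet in at least one vertex (so that the induction has something to hang on): this requires rescaling the nesting axiom of Definition \ref{def:snf} to the level of $M$-complexes and arguing that at level $M+m$ the intersection structure is inherited from the base case via selfsimilarity; once this is in hand, the remainder of the argument is an essentially formal consequence of the free transitive action of $\mathcal R_M$ on the $k$-gon $V_M^{\langle M\rangle}$.
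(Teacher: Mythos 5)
Your proof is correct and follows essentially the same strategy as the paper's: reduce to the claim that a good labeling of order $M$ is determined by its values on $V_M^{\langle M\rangle}$ (via composition with a permutation of $\cA$), and then propagate outward from $\mathcal K^{\langle M\rangle}$ complex by complex using the shared vertex of neighboring $M$-complexes. The only difference is that you make rigorous --- via the free action of $\mathcal R_M$ on the vertex set of the regular $k$-gon, which forces $R^{(1)}=R^{(2)}$ from agreement at a single vertex --- the step the paper states informally as ``there is just one way to put labels on all other vertices,'' which is a sharpening of the same argument rather than a different one.
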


%\begin{proposition}
%For the planar USNF with GLP and the bijection $\widetilde{\ell}_M: V^{\left\langle M \right\rangle}_{M} \to \mathcal{A}$ there exist the unique expansion of $\widetilde{\ell}_M$ to the labelling function $\ell_M: V^{\left\langle \infty \right\rangle}_{M} \to \mathcal{A}$ providing the GLP.
%\end{proposition}

In other words, with the labelling of vertices from $V^{\left\langle M \right\rangle}_{M}$ given, there is exactly one way to label all other vertices from $V^{\left\langle \infty \right\rangle}_{M}$ in the way providing the GLP.

\begin{proof}
Suppose that $\ell_M,\ell_M': V_M^{\langle\infty\rangle}\to\mathcal A$ are two
good labeling functions. By definition, $\ell_M, \ell'_M$ restricted to $V_M^{\langle M\rangle}$
are bijections onto  the alphabet set $\mathcal A.$ Therefore there is a
permutation $\sigma:\mathcal A\to\mathcal A$ such that $\sigma\circ\ell_M'|_{V_M^{\langle M\rangle}}=
\ell_M|_{V_M^{\langle M\rangle}}.$ We claim that $\sigma\circ\ell'_M=\ell_M.$

Indeed, any good labeling of $V_M^{\langle\infty\rangle}$ is determined by its values on $V_M^{\langle M\rangle}.$
To see this, suppose that the labeling on $V_M^{\langle M\rangle}$ is given.  Each of the $M-$complexes neighbor to $\mathcal K^{\langle M\rangle}$ has exactly one vertex common with $\mathcal{K}^{\left\langle M \right\rangle},$  and it already has a label. If we are to preserve the orientation of labels (which is the essence of the good labeling), there is just one way to put labels on all  other vertices of these complexes.
Then, recursively, in the $(n+1)$-th step we label vertices of all complexes neighboring the complexes labelled in $n$-th step that has  not been labelled yet. It can be done uniquely.

As it is clear that $\sigma\circ\ell'_M$ is a good labeling function, agreeing with $\ell_M$ on $V_M^{\langle M\rangle},$ then the argument above shows that they do agree on $V_M^{\langle \infty \rangle}.$
\end{proof}

Below we present a sufficient and necessary condition for the GLP to hold. It will serve as a tool to determine the GLP in specific cases in Section \ref{sec:sufficient}.
\begin{proposition}
\label{pro:glp}
Let $\mathcal{K}^{\left\langle \infty \right\rangle}$ be a planar USNF, $M\in \mathbb Z$ and let ${\ell}_{M,0}: V^{\left\langle M \right\rangle}_{M} \to \mathcal{A}$ be a bijection. Then $\mathcal{K}^{\left\langle \infty \right\rangle}$ has the GLP if and only if there exists an extension of ${\ell}_{M,0}$ to $\widetilde{\ell}_{M,0}: V^{\left\langle M+1 \right\rangle}_{M} \to \mathcal{A}$ such that for every $M$-complex $\Delta_M \subset \mathcal{K}^{\left\langle M+1 \right\rangle}$ represented as (cf. Definition \ref{def:Mcomplex})
\begin{equation*}
\Delta_M = \mathcal{K}^{\left\langle M \right\rangle} + L^{M+1} \nu_{i_{M+1}}, \quad \nu_{i_{M+1}} \in \left\{\nu_1,...,\nu_N\right\},
\end{equation*}
there exists a rotation $R_{\Delta_M} \in \mathcal{R}_M$ such that
\begin{equation} \label{eq:aux_agr}
\widetilde{\ell}_{M,0}\left(R_{\Delta_M}\left(v - L^{M+1} \nu_{i_{M+1}}\right)\right) = \widetilde{\ell}_{M,0}(v), \quad v \in V\left(\Delta_M\right).
\end{equation}
\end{proposition}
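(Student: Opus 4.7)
\emph{Direction $(\Rightarrow)$.} This is essentially immediate from the definitions and Proposition \ref{pro:uniquelabel}. Assuming $\mathcal{K}^{\langle\infty\rangle}$ has the GLP, pick any good labelling function $\ell_M$ of order $M$. Since both $\ell_M|_{V_M^{\langle M\rangle}}$ and $\ell_{M,0}$ are bijections onto $\mathcal{A}$, after composing $\ell_M$ with a suitable permutation of $\mathcal{A}$ we may assume $\ell_M|_{V_M^{\langle M\rangle}} = \ell_{M,0}$. Then $\widetilde{\ell}_{M,0}:=\ell_M|_{V_M^{\langle M+1\rangle}}$ is the desired extension, and \eqref{eq:aux_agr} is nothing but the good-labelling condition \eqref{eq:rot} specialized to those $M$-complexes $\Delta_M\subset\mathcal{K}^{\langle M+1\rangle}$, for which the translation vector in Definition \ref{def:Mcomplex} reduces to $\nu_{\Delta_M}=L^{M+1}\nu_{i_{M+1}}$.

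\emph{Direction $(\Leftarrow)$.} The plan is to bootstrap $\widetilde{\ell}_{M,0}$ into a global good labelling function $\ell_M$ on $V_M^{\langle\infty\rangle}$ by a layer-by-layer construction organized over the $(M+1)$-complexes. The key reduction is \emph{translation invariance} of the hypothesis: for every $(M+1)$-complex $\Delta_{M+1}=\mathcal{K}^{\langle M+1\rangle}+\nu_{\Delta_{M+1}}$, the shifted map $v\mapsto \widetilde{\ell}_{M,0}(v-\nu_{\Delta_{M+1}})$ labels the $M$-vertices inside $\Delta_{M+1}$ and satisfies \eqref{eq:aux_agr} on every internal $M$-complex, because each such $M$-complex translates back to an $M$-complex inside $\mathcal{K}^{\langle M+1\rangle}$ covered by the hypothesis. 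Any other labelling of $\Delta_{M+1}$ with the same internal property differs from it by pre-composition with a rotation from $\mathcal{R}_M$ about the barycenter of $\Delta_{M+1}$. I would then enumerate the $(M+1)$-complexes in order of their graph distance from $\mathcal{K}^{\langle M+1\rangle}$ (Definition \ref{def:graph-distance} at level $M+1$), and for each newly processed $\Delta_{M+1}$ choose the unique rotation whose value at some corner shared with the already labelled region coincides with the label already placed there.

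\emph{Main obstacle: consistency.} Two points must be checked. (a) When a newly processed $(M+1)$-complex shares more than one corner with the already labelled part, the rotation determined by matching at one shared corner must automatically match at all others. (b) The resulting $\ell_M$ must obey \eqref{eq:rot} on \emph{every} $M$-complex, not only those used in the construction. Point (b) is quick: by absorbing $L^{M+1}\nu_{i_{M+1}}$ into $\mathcal{K}^{\langle M\rangle}$ in Definition \ref{def:Mcomplex}, every $M$-complex $\Delta_M$ in $\mathcal{K}^{\langle\infty\rangle}$ is contained in the $(M+1)$-complex $\mathcal{K}^{\langle M+1\rangle}+\sum_{j\geq M+2}L^j\nu_{i_j}$, which is one of those processed; \eqref{eq:rot} for $\ell_M$ on $\Delta_M$ then follows from \eqref{eq:aux_agr} applied inside that $(M+1)$-complex. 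Point (a) is where the real work lies; the strategy would be to use the Symmetry and Nesting axioms Def.~\ref{def:snf}(3)-(4) together with the regular-polygon geometry from Proposition \ref{pro:plane} to argue that adjacent $(M+1)$-complexes are symmetrically placed across the perpendicular bisector of the segment joining any two of their shared essential fixed points, so the rotation forced by matching at one shared corner is automatically the one which matches at every other shared corner---thus the local rigidity of $\widetilde{\ell}_{M,0}$ (only $k$ rotations of freedom) makes the construction well-defined. Once (a) is in place, iterating the layer-by-layer extension produces $\ell_M$ on all of $V_M^{\langle\infty\rangle}$, and since $\ell_M|_{V_M^{\langle M\rangle}}=\ell_{M,0}$ is a bijection onto $\mathcal{A}$, this $\ell_M$ is a good labelling function of order $M$.
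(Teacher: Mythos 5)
Your forward implication is fine (it is the easy direction, which the paper does not even write out: permute the alphabet so that a good labelling restricts to $\ell_{M,0}$, then restrict to $V_M^{\langle M+1\rangle}$). The gap is in the converse, precisely at your point (a), which you correctly flag as ``where the real work lies'' but then propose to close by an argument that cannot work. You tile $\mathcal{K}^{\langle\infty\rangle}$ by $(M+1)$-complexes in graph-distance order and, for each new complex, pick the rotation matching the label already present at one shared corner; consistency at the other corners where the new complex touches the already-labelled region is the entire content of the proposition. Your suggested mechanism --- adjacent $(M+1)$-complexes are symmetric across the perpendicular bisector of the segment joining shared vertices, so the rotation forced at one corner automatically matches at the others --- is a purely local, pairwise statement, whereas the obstruction is global: it is a holonomy condition around closed chains of complexes. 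The Lindstr{\o}m snowflake (Example \ref{ex:snow}) exhibits exactly this failure mode: there, any two adjacent complexes are symmetric images of each other and can be labelled consistently pairwise, yet traversing the cycle of complexes around the central one forces two distinct labels on a single vertex. So pairwise symmetry of neighbours cannot be what saves you; the hypothesis \eqref{eq:aux_agr} must be used as a \emph{global} consistency statement for the whole layer of $M$-complexes inside one $(M+1)$-complex, and then propagated to longer and longer cycles.

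The paper's proof performs exactly this propagation, and its organization is the idea your sketch is missing. Instead of adding $(M+1)$-complexes one at a time, it extends the labelling one \emph{scale} at a time, from $V_M^{\langle M+m\rangle}$ to $V_M^{\langle M+m+1\rangle}$: the arrangement of $(M+m)$-complexes inside $\mathcal{K}^{\langle M+m+1\rangle}$ is a rescaled copy of the arrangement of $M$-complexes inside $\mathcal{K}^{\langle M+1\rangle}$, so the auxiliary function $\kappa_{M+m}(v):=\widetilde{\ell}_{M,0}\left(v/L^m\right)$ on $V_{M+m}^{\langle M+m+1\rangle}$ satisfies the rescaled form \eqref{eq:rot_id} of \eqref{eq:aux_agr} and hence supplies, simultaneously and consistently, one rotation $R_{\Delta_{M+m}}$ for every $(M+m)$-complex of the layer. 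Because $\kappa_{M+m}$ is a single-valued function and $\ell_{M,0}$ is a bijection on $V\left(\mathcal{K}^{\langle M\rangle}\right)$, two complexes meeting at a vertex are automatically assigned rotations sending that vertex to the same vertex of the base complex, so copying the already-constructed labelling of $\mathcal{K}^{\langle M+m\rangle}$ through these rotations is well defined, and condition \eqref{eq:rot} for every $M$-complex follows by composing the rotations accumulated along the induction. This replaces your loop-consistency problem by the single hypothesis of the proposition applied at every scale; to salvage your complex-by-complex scheme you would in effect have to reprove this scale induction in order to control the holonomy of long cycles, so the converse should be restructured along the paper's lines.
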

This proposition means that if a labeling on $V_M^{\langle M\rangle}$ can be extended in a `good' way to $V_M^{\langle M+1\rangle},$
then it can be extended as a good labeling also to $V_M^{\langle\infty\rangle}.$

%In other words, it is crucial to put the good labelling on vertices from $V^{\left\langle M+1 \right\rangle}_{M}$. If it can be done, then the labelling can be expanded on $V^{\left\langle \infty \right\rangle}_{M}$ in a way providing the GLP.

\begin{proof}
%\blue{Clearly, we only need to show that the above condition gives the GLP}.
Let $\ell_{M,0}$ and $\widetilde{\ell}_{M,0}$ be as in the assumptions. We are going to construct a good labeling function
$\ell_M:V_M^{\langle \infty\rangle}\to\mathcal{A}.$

For  $v \in V^{\left\langle M+1 \right\rangle}_{M},$  define $\ell_M (v) = \widetilde{\ell}_{M,0} (v).$ Then we proceed recursively.
Suppose $\ell_M$ has been defined on $V_M^{\langle M+m\rangle},$ for some $m\geq 1.$ We shall put labels $\ell_M$  on $V^{\left\langle M+m+1 \right\rangle}_{M} \backslash V^{\left\langle M+m \right\rangle}_{M}.$
Observe that if $v \in V^{\left\langle M+m+1 \right\rangle}_{M+m},$ then $\frac{v}{L^m}\in V_{M}^{\langle M+1\rangle}.$
% and so ${\ell}_M\left(\frac{v}{L^m}\right)$ has been already defined.
We define an auxiliary function $\kappa_{M+m}: V_{M+m}^{\langle M+m+1\rangle} \to \mathcal{A}$ by
\begin{equation}
\kappa_{M+m} (v) = \widetilde{\ell}_{M,0} \left(\frac{v}{L^m}\right), \quad v \in V^{\left\langle M+m+1 \right\rangle}_{M+m}.
\end{equation}
Then for each $(M+m)$-complex $ \mathcal{K}^{\left\langle M+m+1 \right\rangle}\supset\Delta_{M+m} = \mathcal{K}^{\left\langle M+m \right\rangle} + L^{M+m+1} \nu_{i_{M+m+1}}$ there exists a rotation $R_{\Delta_{M+m}} \in \mathcal{R}_{M+m}$ such that
\begin{equation} \label{eq:rot_id}
\kappa_{M+m}\left(R_{\Delta_{M+m}}\left(v - L^{M+m+1} \nu_{i_{M+m+1}}\right)\right) = \kappa_{M+m} (v), \quad v \in V\left(\Delta_{M+m}\right).
\end{equation}
This is a direct consequence of \eqref{eq:aux_agr} and the scaling property of the fractal (the set $V^{\left\langle M+m+1 \right\rangle}_{M+m}$ is
just a scaled-up version of $V^{\left\langle M+1 \right\rangle}_{M}$).
%because the set \green{, which was well-labelled,} is just a scaled-down version of \green{$V^{\left\langle M+m+1 \right\rangle}_{M+m}$}.

Now, once the rotations in \eqref{eq:rot_id} are identified, we may define $\ell_M$ on $V^{\left\langle M+m+1 \right\rangle}_{M}$ as follows
\begin{multline}
\label{eq:expansion}
\ell_M\left( v \right) = \ell_M\left(R_{\Delta_{M+m}}\left(v - L^{M+m+1} \nu_{i_{M+m+1}}\right)\right),\\
v \in V^{\left\langle M+m+1 \right\rangle}_{M}, \quad v \in \Delta_{M+m} = \mathcal{K}^{\left\langle M+m \right\rangle} + L^{M+m+1} \nu_{i_{M+m+1}}.
\end{multline}
In this way the function $\ell_{M}$ extends inductively to $\bigcup_{m=0}^{\infty} V^{\left\langle M+m \right\rangle}_{M} = V^{\left\langle \infty \right\rangle}_{M}$. Such an inductive procedure automatically gives that the condition (2) in Definition \ref{def:glp} holds true.

\end{proof}

%\green{Proposition \ref{pro:uniquelabel} gives the uniqueness of the good labelling, but in fact the uniqueness of the extension of $\widehat{\ell}_M$ to $\ell_M$ can be seen from the proof of Proposition \ref{pro:glp} as well.}

Not every nested fractal has the GLP. An example is given below.

\begin{example}
\label{ex:snow}
One can label vertices of complexes of the Sierpi\'nski hexagon (Figure \ref{fig:legalhex}), but adding the central complex to form the Lindstr\o m snowflake (Figure \ref{fig:illegalsnow}) makes the labeling impossible.

Indeed, having labelled vertices of the bottom left complex clockwise as $a$, $b$, $c$, $d$, $e$, $f$ we se that the bottom right complex must have its left vertex labelled as $c$. Labelling other vertices of this complex clockwise determines that the label of the top left vertex is $d$. On the other hand, the middle complex has the bottom left vertex labelled as $b$, and so its bottom right vertex should be labelled as $a$. As a vertex cannot have two different labels, this fractal does not have the GLP.

\begin{figure}[ht]
\centering
\includegraphics[scale=0.25]{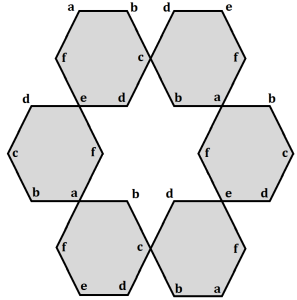}
\caption{The labeling of vertices of complexes of the Sierpi\'nski hexagon.}
\label{fig:legalhex}
\end{figure}

\begin{figure}[ht]
\centering
\includegraphics[scale=0.28]{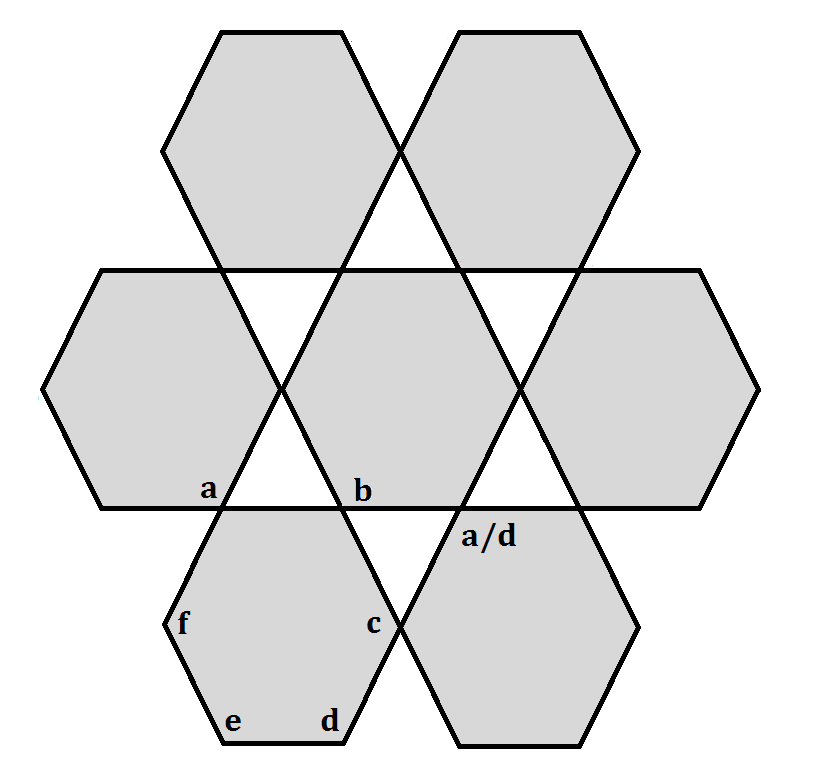}
\caption{Illegal labeling of vertices of complexes of the Lindstr\o m snowflake.}
\label{fig:illegalsnow}
\end{figure}

\end{example}

\subsection{Projections of planar USNFs and their properties} \label{subsec:proj}

For an unbounded fractal $\mathcal{K}^{\left\langle \infty \right\rangle}$ satisfying  the GLP, we
define a projection map $\pi_{M}$ from $\mathcal{K}^{\left\langle \infty \right\rangle}$  onto the primary $M$-complex $\mathcal{K}^{\left\langle M \right\rangle}$ by the formula
\begin{equation}
\pi_M(x) := R_{\Delta_M}\left(x -\nu_{\Delta_M}\right),\quad x\in \mathcal K^{\langle \infty\rangle},
\end{equation}
where $\Delta_M = \mathcal{K}^{\left\langle M \right\rangle} + \nu_{\Delta_M} = \mathcal{K}^{\left\langle M \right\rangle} + \sum_{j=M+1}^{J} L^{j} \nu_{i_j}$ is an $M$-complex containing $x$ and $R_{\Delta_M}\in\mathcal{R}_M$ is the unique rotation %\red{of $\mathcal {K}^{\langle\infty\rangle}$}
determined by \eqref{eq:rot}. More precisely,
\begin{itemize}
\item[(1)] if $x \notin V_M^{\langle \infty\rangle}$, then we take $\Delta_M = \Delta_M(x)$ (i.e. $\Delta_M$ is the unique $M$-complex containing $x$),
\item[(2)] if $x \in V_M^{\langle \infty\rangle}$, then $\Delta_M$ can be chosen as any of the $M$-complexes meeting at $x$.
\end{itemize}
\noindent
If $x$ is a vertex from $V_M^{\langle \infty\rangle},$  possibly belonging to more than one $M$-complex, then we can choose any of those complexes in the above definition -- thanks to the GLP the image does not depend on a particular choice of an $M$-complex containing $x$.

This projection restricted to any $M$-complex $\Delta_M$ is a bijection, therefore the inverse of this restriction,
$(\pi_M|_{\Delta_M})^{-1}=:\widetilde{\pi}_{\Delta_M},$  is well defined and given by the formula
\begin{equation*}
\widetilde{\pi}_{\Delta_M}(x) = R_{\Delta_M}^{-1}(x) + \nu_{\Delta_M},\quad x\in \mathcal K^{\langle M\rangle}
\end{equation*}
where $\Delta_M = \mathcal{K}^{\left\langle M \right\rangle} + \nu_{\Delta_M} = \mathcal{K}^{\left\langle M \right\rangle} + \sum_{j=M+1}^{J} L^{j} \nu_{i_j}$.

We can also project onto any other arbitrarily chosen $M-$complex $\Delta_M.$
\begin{definition}[\textbf{Projection onto an $M$-complex}]
\label{def:projection} Let $\Delta_M\in\mathcal T_M$ be fixed. Define \linebreak$\pi_{\Delta_M}:\mathcal{K}^{\left\langle \infty \right\rangle}\to\Delta_M$ by setting
\begin{equation}
\pi_{\Delta_M}(x) = \widetilde{\pi}_{\Delta_M}\left(\pi_{M}(x)\right).
\end{equation}
\end{definition}
Clearly, $\pi_{\mathcal{K}^{\left\langle M \right\rangle}} = \pi_{M}$, because $\widetilde{\pi}_{\mathcal{K}^{\left\langle M \right\rangle}} = \Id$.

\begin{remark}
{\rm Our definition of $\pi_M$ generalizes that in \cite{bib:KPP-PTRF}, where the case of the planar unbounded Sierpi\'{n}ski gasket was studied. In that paper, it was used that any $x(\in \Delta_M(x))$ can be uniquely represented
as a convex combination of vertices from $V(\Delta_M(x)),$ i.e.
$$
x = \sum_{i=1}^{k} x_i \cdot v_i(x),
$$
where $v_i(x)$ are vertices of $\Delta_M(x)$ and $x_i \in [0,1]$ satisfy $\sum_{i=1}^{k}x_i =1$.
For general nested fractals, this approach may fail. First, if $k>3$, then the above representation of $x$ may not be unique.
Second, in general case, an $M$-complex needs not be included in the convex hull of its vertices. The example of such a situation is given below (Figure \ref{fig:notconvex}). }

\begin{figure}[ht]
\centering
\includegraphics[scale=0.7]{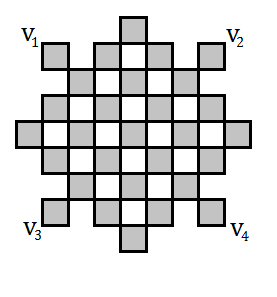}
\caption{The first step of the construction of a complex in a nested fractal which is not a subset of the convex hull of its four vertices. Here we have $k=4$ (complexes are squares and $V_0^{\langle 0\rangle} = \left\{ v_1, v_2,v_3,v_4\right\}$), $L=7$, $N=29$,  In the next iteration we replace each gray square with smaller copies of the whole figure.}
\label{fig:notconvex}
\end{figure}
\end{remark}

The next result states that the compositions of the two projection maps on different levels commute and are consistent with the projection on the finer level. It is important for our further applications. Its proof is a direct consequence of the GLP and it is omitted.

\begin{proposition}
\label{thm:composition}
If an USNF $\mathcal K^{\langle\infty\rangle}$ has the GLP, $M <\widetilde{M},$ and $\Delta_{M} \subset \Delta_{\widetilde{M}}$ are two complexes, then
$$
\pi_{\Delta_M}\circ \pi_{\Delta_{\widetilde {M}}} =
\pi_{\Delta_{\widetilde {M}}}\circ \pi_{\Delta_M}=\pi_{\Delta_M}.
$$
\end{proposition}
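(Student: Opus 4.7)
\emph{The second equality.} I would first verify that $\pi_{\Delta_{\widetilde M}}$ restricted to $\Delta_{\widetilde M}$ is the identity. For $y\in\Delta_{\widetilde M}$, the definition gives $\pi_{\widetilde M}(y)=R_{\Delta_{\widetilde M}}(y-\nu_{\Delta_{\widetilde M}})\in\mathcal{K}^{\langle\widetilde M\rangle}$, and then $\widetilde\pi_{\Delta_{\widetilde M}}$ applies $R_{\Delta_{\widetilde M}}^{-1}$ and adds $\nu_{\Delta_{\widetilde M}}$ back, recovering $y$. Since $\pi_{\Delta_M}$ maps $\mathcal{K}^{\langle\infty\rangle}$ into $\Delta_M\subset\Delta_{\widetilde M}$, the identity $\pi_{\Delta_{\widetilde M}}\circ\pi_{\Delta_M}=\pi_{\Delta_M}$ follows at once.

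\emph{Reduction for the first equality.} Because $\pi_{\Delta_M}=\widetilde\pi_{\Delta_M}\circ\pi_M$ and $\widetilde\pi_{\Delta_M}$ is injective, it suffices to prove the pointwise identity $\pi_M(\pi_{\Delta_{\widetilde M}}(x))=\pi_M(x)$ for every $x\in\mathcal{K}^{\langle\infty\rangle}$. Fix such $x$ and set $\widetilde x:=\pi_{\Delta_{\widetilde M}}(x)$. Unwinding the definition, $\widetilde x=T(x)$ with
$$
T(z)\;:=\;R_{\Delta_{\widetilde M}}^{-1}R_{\Delta_{\widetilde M}(x)}\bigl(z-\nu_{\Delta_{\widetilde M}(x)}\bigr)+\nu_{\Delta_{\widetilde M}},
$$
which is a rigid motion sending $\Delta_{\widetilde M}(x)$ bijectively onto $\Delta_{\widetilde M}$, and which preserves the $\widetilde M$-scale labels $\ell_{\widetilde M}$ by the very definition of $R_{\Delta_{\widetilde M}},R_{\Delta_{\widetilde M}(x)}\in\mathcal{R}_{\widetilde M}$. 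Granting for a moment that $T$ also respects the finer labeling $\ell_M$ on the $M$-scale vertex set $V_M^{\langle\widetilde M\rangle}+\nu_{\Delta_{\widetilde M}(x)}$, the map $T$ carries $\Delta_M(x)$ onto an $M$-complex $\Delta_M^{\star}\subset\Delta_{\widetilde M}$ containing $\widetilde x$ in a label-matching way. Consequently both $R_{\Delta_M^{\star}}\circ T|_{\Delta_M(x)}$ and $R_{\Delta_M(x)}$ are rigid maps from $\Delta_M(x)$ onto $\mathcal{K}^{\langle M\rangle}$ matching $\ell_M$ on $V(\Delta_M(x))$ with $\ell_M$ on $V_M^{\langle M\rangle}$; uniqueness of the label-matching rotation in $\mathcal{R}_M$ forces them to agree, i.e.\ $\pi_M(\widetilde x)=\pi_M(x)$.

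\emph{The crux and main obstacle.} The real work is the label-preservation of $T$ at the finer $M$-scale, which is what upgrades a $\widetilde M$-scale symmetry to an $M$-scale symmetry. To prove it, I would compare two labelings of $V_M^{\langle\widetilde M\rangle}+\nu_{\Delta_{\widetilde M}(x)}$: the restriction of $\ell_M$, and the pullback $\ell_M\circ T$. Both are good labelings of order $M$ on the sub-fractal $\Delta_{\widetilde M}(x)$ in the sense of Definition \ref{def:glp} — the second one because $T$ permutes the $M$-complexes contained in $\Delta_{\widetilde M}(x)$ and acts on each of them as a rotation from $\mathcal{R}_M$ composed with a translation. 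Moreover they agree on $V(\Delta_{\widetilde M}(x))$ by the choice of $T$. The inductive extension procedure used in the proof of Proposition \ref{pro:glp}, transported inside $\Delta_{\widetilde M}(x)$ and combined with the uniqueness statement of Proposition \ref{pro:uniquelabel}, then propagates this agreement to every $M$-scale vertex of $\Delta_{\widetilde M}(x)$. This yields the required label preservation and closes the argument. The delicate point — and the main obstacle — is exactly this compatibility between the labelings at the two scales: intuitively obvious from self-similarity, but needing the uniqueness machinery of Section \ref{subsec:glp} to be made precise.
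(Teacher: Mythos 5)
The paper itself omits the proof of Proposition \ref{thm:composition} (it is declared a ``direct consequence of the GLP''), so there is no official argument to compare against. Your second equality, the reduction of the first equality to the identity $\pi_M\circ\pi_{\Delta_{\widetilde M}}=\pi_M$, and the final uniqueness-of-the-rotation step are all correct, and you have located the crux in the right place. But there is a genuine gap exactly at the seed of your propagation: the assertion that $\ell_M$ and $\ell_M\circ T$ ``agree on $V(\Delta_{\widetilde M}(x))$ by the choice of $T$'' is unjustified. The map $T$ is built from the rotations $R_{\Delta_{\widetilde M}}$, $R_{\Delta_{\widetilde M}(x)}$ attached to the \emph{order-$\widetilde M$} labeling $\ell_{\widetilde M}$, which is chosen independently of $\ell_M$; by Remark \ref{rem:rem_glp} the restriction of $\ell_M$ to $V_{\widetilde M}^{\langle\infty\rangle}$ is in general not a good labeling of order $\widetilde M$ (for the hexagon the corners of an $(M+1)$-complex carry $\ell_M$-labels $a,c,e,a,c,e$), so matching $\ell_{\widetilde M}$ on the corners gives no direct control of their $\ell_M$-labels. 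Indeed the implication ``$\ell_{\widetilde M}(w_1)=\ell_{\widetilde M}(w_2)\Rightarrow \ell_M(w_1)=\ell_M(w_2)$'' for $\widetilde M$-vertices is essentially the proposition itself, so assuming it at this point is circular. Without a seed, the uniqueness/propagation machinery has nothing to propagate.

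The repair is short and uses connectivity. First suppose $\Delta_{\widetilde M}(x)$ and $\Delta_{\widetilde M}$ share a vertex $w$. Then $T(w)=w$, since both sides are the unique corner of $\Delta_{\widetilde M}$ carrying the label $\ell_{\widetilde M}(w)$; hence $\ell_M$ and $\ell_M\circ T$ agree at the single $M$-vertex $w$. Since both restrict, on every $M$-complex inside $\Delta_{\widetilde M}(x)$, to one of the $k$ cyclic rotations of the reference labeling (your computation that $T$ acts cellwise as an element of $\mathcal{R}_M$ plus a translation is correct), agreement at one vertex forces agreement on the whole $M$-complex containing $w$, and then the propagation argument of Proposition \ref{pro:uniquelabel} spreads it over all of $V_M^{\langle\infty\rangle}\cap\Delta_{\widetilde M}(x)$. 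For a general pair of $\widetilde M$-complexes, note that the canonical maps telescope along a chain of complexes, $\widetilde\pi_{\Delta''}\circ\bigl(\pi_{\widetilde M}|_{\Delta'}\bigr)$ composed with $\widetilde\pi_{\Delta'}\circ\bigl(\pi_{\widetilde M}|_{\Delta}\bigr)$ being the canonical map from $\Delta$ to $\Delta''$, so ``the canonical map preserves $\ell_M$'' is an equivalence relation on $\widetilde M$-complexes which, by the adjacent case, identifies neighbours; connectivity of the fractal then yields it for arbitrary pairs. With this replacement your plan closes.
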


\bigskip

\subsection{Sufficient conditions for GLP of planar USNFs}\label{sec:sufficient} \label{subsec:suf_cond_glp}

In this section we will give the general geometric sufficient conditions for the good labelling property (cf. Definition \ref{pro:glp}) under which the projections $\pi_M$ can be properly defined. In other words, we will find and describe  general subclasses of simple nested fractals for which %the key assumption of Theorem \ref{thm:transition} holds, and, consequently,
the projected processes can be well-defined.

We will analyze, on which unbounded nested fractals, given labeling of the vertices from $V^{\left\langle M\right\rangle}_{M}$, $M \in \mathbb{Z},$ we can label all other vertices from $V^{\left\langle \infty \right\rangle}_{M}$ in a unique way, such that the orientation of labels on each $M$-complex is preserved. In order to simplify the reasoning we will write proofs for $M=0$.

%The assertion of the first theorem is just a special case of the second one, nevertheless we include it because of different technique used in its proof (adapted from the labelling the Sierpi\'nski Gasket in \cite{bib:KPP-PTRF, bib:KaPP2}).

Recall that by $N$ we have denoted the number of similitudes generating $\mathcal{K}^{\left\langle 0 \right\rangle}$ and by $k$ the number of their essential fixed points, i.e., $k=\# V^{\left\langle 0\right\rangle}_{0}$. Throughout we always assume that $k \geq 3$. Our first result states that if the complexes are composed of triangles (i.e. $k=3$), then the GLP always holds. In its proof we use a labelling technique adapted from the papers \cite{bib:KPP-PTRF, bib:KaPP2}, where the Sierpi\'nski Gasket was studied. Note that if $k=3$, then $V_{0}^{\left\langle 0\right\rangle}$ is a subset of a lattice on the plane.

\begin{theorem} \label{th:triangles}
If $k=3$,
%$\# V_{0}^{\left\langle 0\right\rangle} = 3$,
%(i.e. there are exactly $3$ essential fixed points),
then $\mathcal{K}^{\left\langle \infty \right\rangle}$ has the GLP.
\end{theorem}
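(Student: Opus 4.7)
My plan is to reduce the theorem to Proposition \ref{pro:glp} and then produce the extension explicitly via a $3$-coloring of the ambient triangular lattice. Working at level $M=0$ (any other level is analogous by scaling), it suffices to start from an arbitrary bijection $\ell_{0,0}\colon V_0^{\langle 0\rangle}\to\mathcal A$, produce an extension $\widetilde\ell_{0,0}\colon V_0^{\langle 1\rangle}\to\mathcal A$, and verify the rotational compatibility condition \eqref{eq:aux_agr} for every $0$-complex $\Delta_0\subset\mathcal K^{\langle 1\rangle}$.

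The geometric setup is straightforward because $k=3$. By Proposition \ref{pro:plane}, $V_0^{\langle 0\rangle}$ consists of the three vertices of an equilateral triangle, and by the preceding proposition ($U=\Id$) every $0$-complex is a pure translate of $\mathcal K^{\langle 0\rangle}$, hence has the same orientation. Consequently $V_0^{\langle\infty\rangle}\subset \Lambda$, where $\Lambda=\mathbb Z e_1+\mathbb Z e_2$ is the triangular lattice generated by two edges of $\mathcal K^{\langle 0\rangle}$ meeting at angle $\pi/3$, and each $0$-complex has vertices of the form $\nu_{\Delta_0}+\{0,e_1,e_2\}$ for some $\nu_{\Delta_0}\in\Lambda$.

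The key observation I plan to exploit is that the map $c\colon\Lambda\to\mathbb Z/3\mathbb Z$ defined by $c(me_1+ne_2):=(m-n)\bmod 3$ is a group homomorphism, and that the three rotations $R_k\in\mathcal R_0$ (by $2\pi k/3$ around the barycenter of $\mathcal K^{\langle 0\rangle}$, $k=0,1,2$) act on $V_0^{\langle 0\rangle}$ by shifting colors: a short coordinate computation gives $c(R_k(w))\equiv c(w)+k\pmod 3$ for every $w\in V_0^{\langle 0\rangle}$. Given this, I would define $\widetilde\ell_{0,0}(v):=\phi(c(v))$ for $v\in V_0^{\langle 1\rangle}$, where $\phi\colon\mathbb Z/3\mathbb Z\to\mathcal A$ is the bijection determined by $\ell_{0,0}$ and $c|_{V_0^{\langle 0\rangle}}$. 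Additivity of $c$ reduces \eqref{eq:aux_agr} for $\Delta_0$ to the identity $c(\nu_{\Delta_0})\equiv k\pmod 3$, which uniquely selects the appropriate $R_{\Delta_0}=R_k\in\mathcal R_0$. Proposition \ref{pro:glp} then delivers the GLP.

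The routine step is verifying the color-shift formula $c(R_k(w))\equiv c(w)+k\pmod 3$, which is a short direct calculation. The deeper structural point, which I want to flag as the main conceptual obstacle, is the orientation claim -- that each $0$-complex is a pure translate (not a reflection) of $\mathcal K^{\langle 0\rangle}$, which relies crucially on the previously proven fact that $U=\Id$. Without it, some $0$-complexes could appear as reflected triangles and the induced label permutation on $V(\Delta_0)$ could be a transposition, which is not realized by any element of $\mathcal R_0$ (which consists of rotations only).
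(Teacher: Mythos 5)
Your proposal is correct and is essentially the paper's own argument: the paper labels $n_1e_1+n_2e_2$ by $(p_1^{n_1}\circ p_2^{n_2})(a)=p_1^{\,n_1-n_2}(a)$, which is exactly your coloring $c(me_1+ne_2)=(m-n)\bmod 3$ composed with a bijection onto $\cA$, and it likewise rests on $U=\Id$ and the embedding $V_0^{\langle\infty\rangle}\subset\mathbb{Z}e_1+\mathbb{Z}e_2$. The only cosmetic difference is that you verify the rotation condition on $V_0^{\langle 1\rangle}$ and invoke Proposition \ref{pro:glp}, while the paper writes the good labelling down globally on $V_0^{\langle\infty\rangle}$ at once.
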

\begin{proof}
If there are three essential fixed points, then the vertices of a complex form an equilateral triangle. Without losing generality, we can assume that
\begin{equation}
V^{\left\langle 0\right\rangle}_{0} = \left\{ \left(0,0\right), \left(1,0\right), \left(\frac{1}{2},\frac{\sqrt{3}}{2}\right) \right\}
\end{equation}
and set
\begin{equation*}
\ell_0\left(\left(0,0\right)\right) = a, \quad \ell_0\left(\left(1,0\right)\right) = b, \quad \ell_0\left(\left(\frac{1}{2},\frac{\sqrt{3}}{2}\right)\right) = c.
\end{equation*}

We observe that $V^{\left\langle \infty \right\rangle}_{0} \subset \mathbb{Z}e_1 + \mathbb{Z}e_2$, where $e_1 = \left(1,0\right)$, $e_2 = \left(\frac{1}{2},\frac{\sqrt{3}}{2}\right)$.
Similarly to the labelling of vertices of the Sierpi\'nski Gasket in \cite{bib:KPP-PTRF}, we can represent every vertex $v \in V^{\left\langle \infty \right\rangle}_{0}$ as $v = n_1 e_1 + n_2 e_2, n_1,n_2 \in \mathbb{N}$ and this representation is unique.

We consider the commutative group of rotations (subgroup of all permutations) of labels: $\mathbb{A}_3 = \left\{ \Id, \left(a,b,c\right), \left(a,c,b\right)\right\}$ and denote $p_1= \left(a,b,c\right)$, $p_2 = \left(a,c,b\right)$. Clearly $p_1^3=\Id$, $p_2^3=\Id$.

We define $\ell_0$ on $V^{\left\langle \infty \right\rangle}_{0}$ as follows:
\begin{equation}
\ell_0\left( n_1 e_1 + n_2 e_2 \right) = \left(p_1^{n_1} \circ p_2^{n_2} \right)\left(a\right).
\end{equation}

By such labelling each $0$-complex of a form $\Delta_0  = \mathcal{K}^{\left\langle 0 \right\rangle} + \sum_{j=1}^{J} L^{j} \nu_{i_j} =  \mathcal{K}^{\left\langle 0 \right\rangle} + n_1 e_1 + n_2 e_2$ has the complete set of three labels on its vertices and the corresponding rotation $R_{\Delta_0}\in\mathcal {R}_0$ is such that
\begin{equation*}
\ell_0 \left(R_{\Delta_0} \left(x \right) \right) = \left(p_2^{n_1} \circ p_1^{n_2} \right)\left(\ell_0 (x)\right), x \in V^{\left\langle 0 \right\rangle}_{0}
\end{equation*}

In other words, the rotation $R_{\Delta_0}$ rotates the labelled points by such angle that the labels are permuted according to $p_2^{n_1} \circ p_1^{n_2}$.

\end{proof}

\begin{example}
Figure \ref{fig:triangleex} shows well-labelled vertices from $V^{\left\langle 1 \right\rangle}_{0}$ for the fractal with $k=3$, $N=15$, $L=6$. With $\ell_0$ on $V^{\left\langle 0 \right\rangle}_{0}$ given (labelling of the vertices of bottom leftmost triangle), we can label all vertices from $V^{\left\langle 1 \right\rangle}_{0}$ in such a way that the orientation of labels on each $0$-complex is preserved.

Observe that in this example, all three vertices from  $V^{\left\langle 1 \right\rangle}_{1}$ are labelled as $a$, so the labelling function $\ell_1$ on  $V^{\left\langle \infty \right\rangle}_{1}$ has to be defined independently of $\ell_0$.

\begin{figure}[ht]
\centering
\includegraphics[scale=0.2]{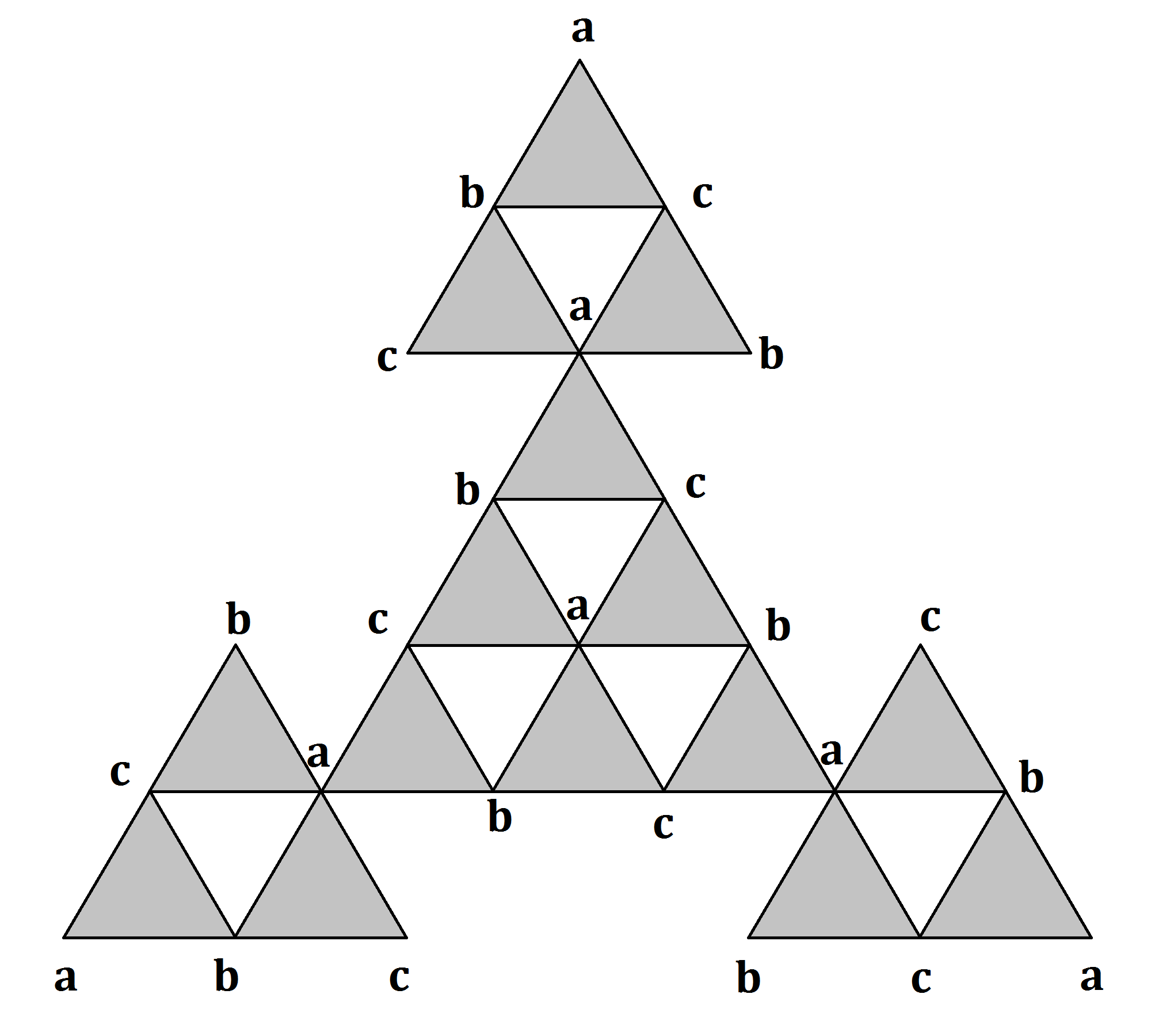}
\caption{Values of $\ell_0$ on $V^{\left\langle 1 \right\rangle}_{0}$ for the fractal with triangular complexes.}
\label{fig:triangleex}
\end{figure}
\end{example}

\begin{theorem} \label{th:all_ess}
If {$k \geq 3$} and there are no inessential fixed points of the similitudes generating $\mathcal{K}^{\langle 0\rangle},$ i.e. $k = N$,
%(i.e. the number of essential fixed points is equal to the number $N$ of all fixed points and $N \geq 5$),
then $\mathcal{K}^{\left\langle \infty \right\rangle}$ has the GLP.
\end{theorem}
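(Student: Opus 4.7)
The plan is to apply Proposition~\ref{pro:glp}, which reduces the task to extending an arbitrary bijection $\ell_{0,0}\colon V_0^{\langle 0\rangle}\to\mathcal{A}$ to a good labeling $\widetilde\ell_{0,0}\colon V_0^{\langle 1\rangle}\to\mathcal{A}$. Concretely, for each $0$-subcomplex $\Delta_0^{(i)}=\mathcal K^{\langle 0\rangle}+L\nu_i$ of $\mathcal K^{\langle 1\rangle}$ ($i=1,\ldots,N$) I must produce a rotation $R_i\in\mathcal R_0$ so that the prescription $\widetilde\ell_{0,0}(v+L\nu_i):=\ell_{0,0}(R_i v)$ for $v\in V_0^{\langle 0\rangle}$ is single-valued at every vertex shared by two subcomplexes.

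The hypothesis $k=N$ provides a decisive structural simplification. Since each similitude $\Psi_i$ has its fixed point $x_i\in V_0^{\langle 0\rangle}$, Lemma~\ref{lem:koch} combined with $\Psi_l(x_l)=x_l$ shows that $\Psi_i(\mathcal K^{\langle 0\rangle})$ is the \emph{unique} $(-1)$-subcomplex containing $x_i$; rescaling by $L$ yields a canonical bijection between the $N$ subcomplexes $\Delta_0^{(i)}$ of $\mathcal K^{\langle 1\rangle}$ and the essential fixed points $Lx_1,\ldots,Lx_N\in V_1^{\langle 1\rangle}$. I would normalize by setting $R_1=\Id$ (compatible with $\Delta_0^{(1)}=\mathcal K^{\langle 0\rangle}$) and propagate $R_i$ along the adjacency graph on the $\Delta_0^{(i)}$'s. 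Every shared vertex $\Psi_i(x_s)=\Psi_j(x_t)$ forces $R_i(x_s)=R_j(x_t)$, and since a rotation of a regular $k$-gon is determined by its image at a single vertex, this uniquely specifies $R_j^{-1}R_i$. Moreover, the symmetry axiom of Definition~\ref{def:snf} combined with the uniqueness just established shows that the reflection $S_{x_i,x_j}$ swaps $\Psi_i$ and $\Psi_j$ as point sets: the image $S_{x_i,x_j}(\Psi_i(V_0^{\langle 0\rangle}))$ must equal some $\Psi_{j'}(V_0^{\langle 0\rangle})$, and since $S_{x_i,x_j}(x_i)=x_j$ lies in the image while $\Psi_j$ is the only subcomplex containing $x_j$, we have $j'=j$. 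This involution pairs the shared vertices of the pair, so all of them impose the same constraint on $R_j^{-1}R_i$.

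The main obstacle, and the technical heart of the proof, is to verify the global consistency of this propagation, i.e., that the rotations obtained by following any closed cycle in the adjacency graph multiply to the identity. Here the polygonal structure from Proposition~\ref{pro:plane} and the reflections $S_{x_i,x_j}$ carry the load: each edge of a cycle contributes an elementary reflection of the vertex $k$-gon, and the product around any closed loop lands in the rotation subgroup; the connectivity axiom~(5) of Definition~\ref{def:snf} and the planar embedding of the complex then ensure that the net angular shift around the loop vanishes modulo $2\pi$. Once this cocycle vanishing is established, the $R_i$'s are well defined, $\widetilde\ell_{0,0}$ is a good labeling of $V_0^{\langle 1\rangle}$ extending $\ell_{0,0}$, and Proposition~\ref{pro:glp} yields the GLP for $\mathcal K^{\langle\infty\rangle}$.
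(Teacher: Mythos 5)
Your overall strategy matches the paper's: reduce to Proposition~\ref{pro:glp}, use $k=N$ together with Lemma~\ref{lem:koch} to pair the subcomplexes of $\mathcal K^{\langle 1\rangle}$ with the essential fixed points, and propagate rotations along adjacencies. The gap is in the step you yourself flag as ``the technical heart'': the cocycle vanishing around closed loops. You assert that ``the planar embedding of the complex ensures that the net angular shift around the loop vanishes modulo $2\pi$,'' but this is precisely the claim that fails for planar fractals without the GLP --- the Lindstr{\o}m snowflake and the nonagonal example in the paper are planar, their adjacency graphs contain cycles, and propagating labels around those cycles produces a nonzero net shift. Planarity and connectivity alone cannot deliver the conclusion; the hypothesis $k=N$ has to enter the consistency computation itself, and your argument only uses $k=N$ to set up the bijection between subcomplexes and vertices, not to close the loop.

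The paper closes this gap concretely. Because $k=N$, the $0$-complexes inside $\mathcal K^{\langle 1\rangle}$ form a single ring of length $k$: each $\Delta_0^{(i)}$ touches $V\left(\mathcal K^{\langle 1\rangle}\right)$ at exactly one vertex $x_i+L\nu_i$, and by the symmetry axiom every complex meets its counter-clockwise neighbour at a vertex in the same relative position $r$. Hence there is exactly one cycle to check, each step shifts the label index by the constant $2(r-1)$ modulo $k$, and after traversing the ring the label assigned to the last shared vertex is $a_{(r+(k-1)\cdot 2(r-1)) \bmod k}$, which equals the original label $a_{k-r+2}$ by direct modular arithmetic since $r+(k-1)\cdot 2(r-1)\equiv k-r+2 \pmod k$. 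Your proof needs either this explicit computation or some equivalent argument in which $k=N$ genuinely enters the loop-consistency step; as written, the decisive step is asserted rather than proved.
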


\begin{proof}
{
Let $V^{\left\langle 0 \right\rangle}_{0} = \left\{x_1, ..., x_k\right\}$ and let $x_1, ..., x_k$ be ordered counter-clockwise (i.e. $x_j$ and $x_{j+1}$ are the endpoints of an edge of the polygon spanned by $V^{\left\langle 0 \right\rangle}_{0}$). Without losing generality, we can and will assume that $(0,0) = x_1 \in V^{\left\langle 0 \right\rangle}_{0}$. For simplicity let us also assume that $x_i$ is the fixed point of a similitude $\Psi_i$, i.e., $\Psi_i(x) = (1/L) x + \nu_i$.
}

The assumption $k = N$ implies that the complexes of a given generation form a 'ring' structure (Figures \ref{fig:ring1}, \ref{fig:ring2}). More precisely, the $1$-complex $\mathcal{K}^{\left\langle 1 \right\rangle}$ is composed of $k$ $0$-complexes $\Delta^{(1)}_0, \Delta^{(2)}_0,...,\Delta^{(k)}_0$ (arranged circularly) such that $\Delta^{(1)}_0 = \mathcal{K}^{\left\langle 0 \right\rangle}$, $\Delta^{(i)}_0 = \mathcal{K}^{\left\langle 0 \right\rangle} + L\nu_i$, $i=2,...,k$,
  and $\mathcal{K}^{\left\langle 1 \right\rangle} \cap \Delta^{(i)}_0 = \left\{x_i+L\nu_i\right\}$. In particular, any of the two edges of the complex $\Delta^{(i)}_0$ that meet at $x_i+L\nu_i$, are parallel to the corresponding edge of
$\mathcal{K}^{\left\langle 1 \right\rangle}$. This also shows that $N(=k)$ is not divisible by 4. Indeed, otherwise, the polygons spanned e.g. by the sets of the vertices $V\left(\Delta^{(1)}_0\right)$ and $V\left(\Delta^{(2)}_0\right)$, respectively, would necessary have a common edge, perpendicular to one of the edges of the polygon spanned by $V^{\left\langle 0 \right\rangle}_{0}$ (see Figure \ref{fig:oct1}). This would clearly contradict the nesting property.
%If the number of essential fixed points is divisible by 4 and all fixed points are essential, then the neighbor complexes necessarily have a common edge, thus deny the nesting condition (see Figure \ref{fig:oct1}).

\begin{figure}[ht]
\centering
\includegraphics[scale=0.4]{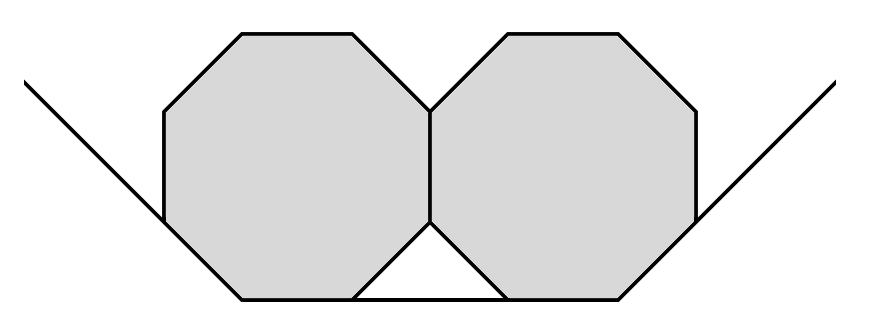}
\caption{If $N$ is divisible by 4 (e.g. $N=8$), then neighbor complexes necessarily share s vertical edge.}
\label{fig:oct1}
\end{figure}

\begin{figure}[ht]
\centering
\includegraphics[scale=0.25]{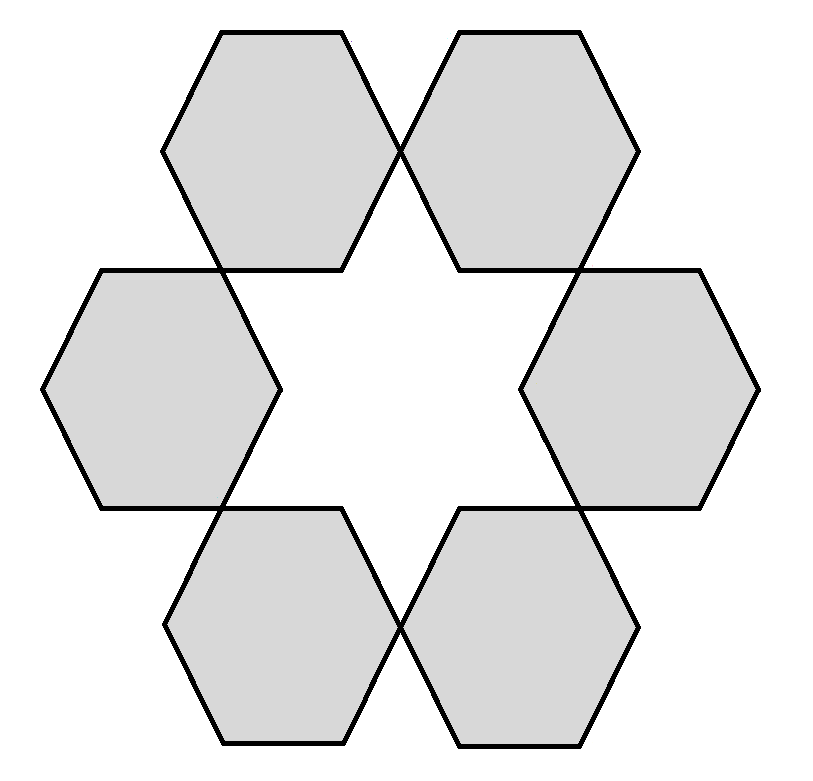}
\caption{First iteration of construction in case of six essential fixed points.}
%\caption{\blue{Connected}??? vertices from $V^{\left\langle 1 \right\rangle}_{0}$ in case of six essential fixed points.}
\label{fig:ring1}
\end{figure}

\begin{figure}[ht]
\centering
\includegraphics[scale=0.15]{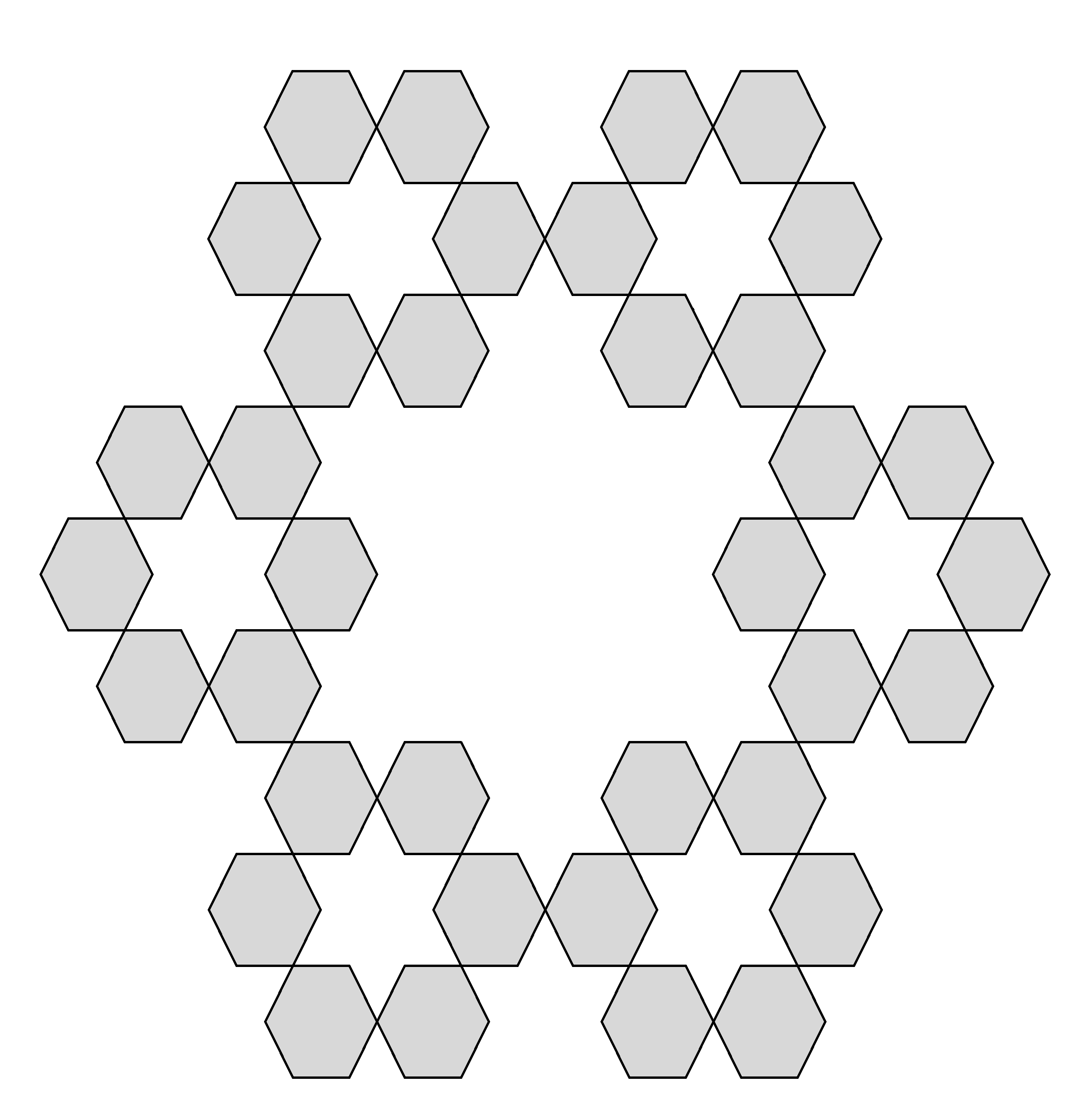}
\caption{Second iteration of construction in case of six essential fixed points.}
%\caption{\blue{Connected}??? vertices from $V^{\left\langle 2 \right\rangle}_{0}$ in case of six essential fixed points.}
\label{fig:ring2}
\end{figure}

%Any $1$-complex can contain \red{at most} one fixed point, therefore \red{under current assumptions} every $1$-complex contains \red{exactly} one essential fixed point and the \red{edges of 1-complexes} are \red{parallel} to the edges of the polygon spanned by $V^{\left\langle 0 \right\rangle}_{0}$.

%Let $V^{\left\langle 0 \right\rangle}_{0} = \left\{x_0, ..., x_{k-1}\right\}$ and let $x_0, ..., x_{k-1}$ be ordered counter-clockwise (i.e. $x_j$ and $x_{j+1}$ are the \red{endpoints of an edge} of the polygon spanned by $V^{\left\langle 0 \right\rangle}_{0}$). Without losing generality, we can \red{and will} assume that $(0,0) = x_0 \in V^{\left\langle 0 \right\rangle}_{0}$.

%For simplicity let us also assume that $x_i$ is the fixed point of a similitude $\Psi_i$, $\Psi_i(x) = (1/L) x + \nu_i$.

We are now in a position to establish the GLP.
First, we label the set  $V^{\left\langle 0 \right\rangle}_{0}$ as follows:
\begin{equation*}
\ell_0 \left( x_i \right) = a_i, \quad x_i \in V^{\left\langle 0 \right\rangle}_{0}.
\end{equation*}
Due to Proposition \ref{pro:glp} it suffices to show that the function $\ell_0$ extends to a function $\widetilde \ell_0: V^{\left\langle 1 \right\rangle}_{0} \to \cA$ such that the condition \eqref{eq:aux_agr} holds (we assume that $M=0$).

The $0$-complex $\Delta^{(1)}_0=\mathcal{K}^{\left\langle 0 \right\rangle}$ meets its counter-clockwise neighbor $0$-complex $\Delta^{(2)}_0$ at $x_r$, and its clockwise neigbour
$\Delta^{(k)}_0$ at $x_{k-r+2}$ (this is a consequence of the symmetry axiom of  nested fractals).The complex $\Delta^{(1)}_0$ is already labeled.
Having labeled the complex $\Delta^{(l)}_0,$ for some $l=1,...,k-1$
we can extend this labeling to $\Delta^{(l+1)}_0,$ starting with the vertex common with  $\Delta^{(l)}_0,$ and going cyclically counter-clockwise along its vertices: $a_1\to a_2\to\cdots\to a_{k}\to a_{1}.$ Proceeding this way $l-1$ times, we will label all the $0-$vertices inside $\mathcal K^{\langle 1\rangle}.$  Observe that the vertex common to $\Delta^{(l)}_0$ and $\Delta^{(l+1)}_0$ will be assigned the label $a_{(r+ (l-1)\cdot 2(r-1))({{\rm mod} \, k})}.$ All the vertices will be labeled once, except for the vertex $x_{k-r+2}$ common to $\Delta^{(1)}_0$ and $\Delta^{(k)}_0.$
The new label on this vertex will be $a_{(r+ (k-1)\cdot 2(r-1))({{\rm mod} \, k})}.$
The old label was $a_{k-r+2}.$ As $r+(k-1) \cdot 2(r-1)\equiv (k-r+2) ({\rm mod} \, k)$, the two labels agree and so we have constructed a proper extension $\widetilde \ell_0$ of $\ell_0$ from $V^{\left\langle 0 \right\rangle}_{0}$ to $V^{\left\langle 1 \right\rangle}_{0}$.

\end{proof}

Our theorems above give sufficient, but not necessary conditions for the GLP. Below we present a theorem which characterizes the fractals with the GLP among those with an even number of essential fixed points.

\begin{theorem} \label{th:even_ess}
If $\# V_{0}^{\left\langle 0\right\rangle}=k$, $k \geq 3$, is an even number, then $\mathcal{K}^{\left\langle \infty \right\rangle}$ has GLP if and only if the $0$-complexes inside the $1$-complex $\mathcal{K}^{\left\langle 1 \right\rangle}$ can be split into two
disjoint classes such that each complex from one of the classes intersects only complexes from the other class.
\end{theorem}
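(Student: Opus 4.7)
My plan is to reduce the whole statement to a single geometric constraint on how two $0$-complexes can share a vertex, and then recognize bipartiteness as precisely the obstruction for a consistent labeling on $V_0^{\langle 1\rangle}$ to exist.

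First I would establish the following \emph{antipodal lemma}: for even $k$, whenever two $0$-complexes $\Delta,\Delta'$ in $\mathcal K^{\langle 1\rangle}$ share a vertex $v$, the position of $v$ in the counter-clockwise vertex numbering of $\Delta'$ differs from its position in $\Delta$ by exactly $k/2$ modulo $k$. The input is that $U=\Id$ (established in the preceding section), so $\Delta'$ is a pure translate of $\Delta$; at $v$ each complex occupies an interior angular cone of measure $\frac{(k-2)\pi}{k}$; and by the nesting axiom the two cones can share only the apex $v$, since a shared edge would give uncountably many non-vertex common points. An angular count then shows that for even $k$ the unique admissible placement is the antipodal one, in which the two centers lie on a single line through $v$ on opposite sides.

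Next I would parametrize any tentative good labeling by a ``state function'' $s:\{\Delta_0\subset\mathcal K^{\langle 1\rangle}\}\to\mathbb Z/k\mathbb Z$, where $s(\Delta_0)$ records the index of the rotation $R_{\Delta_0}\in\mathcal R_0$ from Definition \ref{def:glp}. The antipodal lemma translates the label-matching requirement at each shared vertex into $s(\Delta)-s(\Delta')\equiv k/2\pmod k$. Going around any cycle of length $n$ in the adjacency graph of $0$-complexes inside $\mathcal K^{\langle 1\rangle}$ therefore shifts the state by $nk/2\pmod k$, which vanishes if and only if $n$ is even. Consequently a consistent state function exists if and only if the adjacency graph is bipartite, which is exactly the two-class partition in the statement. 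The forward direction is then immediate from GLP; for the converse, I would place $\mathcal K^{\langle 0\rangle}$ in one class with state $0$ and the other class with state $k/2$, define $\widetilde\ell_{0,0}$ on $V_0^{\langle 1\rangle}$ by the corresponding rotations in $\mathcal R_0$, and invoke Proposition \ref{pro:glp} to propagate this labeling to all of $V_0^{\langle \infty\rangle}$.

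The hard part will be the antipodal lemma. One must give a uniform argument, valid for all even $k\geq 4$, ruling out every non-antipodal placement of two translated regular $k$-gons that share exactly one vertex: any rotational offset either forces the interior cones to overlap, or makes the polygons share a whole edge, forbidden by Definition \ref{def:snf}(3). Once this lemma is secured, everything else is bookkeeping: the state-function argument, the verification of \eqref{eq:aux_agr}, and the appeal to Proposition \ref{pro:glp}.
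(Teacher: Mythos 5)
Your architecture coincides with the paper's: the paper proves the ``if'' direction by assigning the rotation $\Id$ to one class and $R_{k/2}$ to the other, checking consistency at each shared vertex via the point reflection through the barycenter, and invoking Proposition \ref{pro:glp}; it proves the ``only if'' direction by observing that for even $k$ every rotation $R_{\Delta_0}$ is forced to be $\Id$ or $R_{k/2}$ and that two intersecting complexes cannot carry the same rotation. Your state-function $s:\mathcal T_0\to\mathbb Z/k\mathbb Z$ with edge constraint $s(\Delta)-s(\Delta')\equiv k/2$ and the cycle-parity criterion is a clean repackaging of exactly this, and the reduction of both implications to bipartiteness of the adjacency graph is correct (note that connectivity of that graph, Definition \ref{def:snf}(5), plus the normalization $R_{\mathcal K^{\langle 0\rangle}}=\Id$ pins the states down to $\{0,k/2\}$). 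The one place where you go beyond the paper is the \emph{antipodal lemma}: the paper simply asserts that for even $k$ each $0$-complex is the image of a neighboring one under the rotation by $\pi$ about their common vertex, whereas you propose to prove it. Your angular count is the right idea: the two interior cones at $v$ have total measure $2\pi-4\pi/k$, so the offset $d=\min(|i-i'|,k-|i-i'|)$ must satisfy $2d\geq k-2$, leaving only $d=k/2$ (antipodal) and $d=k/2-1$ (tangent cones, i.e.\ coinciding edges emanating from $v$).

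The step that would fail as written is the exclusion of the tangent case by ``a shared edge would give uncountably many non-vertex common points.'' A $0$-complex need not contain the edges of the polygon spanned by its vertices, and need not even be contained in that polygon's convex hull --- the paper's own example in Figure \ref{fig:notconvex} makes this point --- so a coinciding polygon edge produces no non-vertex points of $\mathcal K^{\langle 0\rangle}$ at all, and likewise genuinely overlapping cones do not by themselves contradict the Nesting axiom, which only constrains the fractal sets, not their hulls. What the tangent case \emph{does} force is that the two complexes share the second endpoint of the coinciding edges, i.e.\ two distinct vertices; the overlapping case must be ruled out at the level of the convex hulls via the Open Set Condition and Symmetry rather than via points of the fractal. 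This is a repairable soft spot, and to be fair it is one the paper shares, since the corresponding geometric fact is stated there without proof; but if you intend the antipodal lemma to be the load-bearing new ingredient, this is the step that needs an honest argument.
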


\begin{proof}
Let $k>2$ be an even number and let us assume that the $0$-complexes inside the $1$-complex $\mathcal{K}^{\left\langle 1 \right\rangle}$ can be split in two classes $\mathcal{T}_{0}'$ and $\mathcal{T}_{0}''$ such that each complex from one of those classes intersects only  complexes from the other class.

Without losing generality we can assume that $\mathcal{K}^{\left\langle 0 \right\rangle} \in \mathcal{T}_{0}^{'}$ and that its vertices are labelled counter-clockwise $a_1, ..., a_k$. Denote this labelling by $\ell_0.$ We reproduce these labels on $0$-complexes $\Delta_0' = \mathcal{K}^{\left\langle 0 \right\rangle} + L \nu_i \in \mathcal{T}_{0}'$ by
\begin{equation*}
\ell_0 \left(x \right) = \ell_0 \left(x-L \nu_i \right), \quad x \in V\left( \mathcal{K}^{\left\langle 0 \right\rangle} + L \nu_i \right),
\end{equation*}
i.e. the corresponding rotation $R_{\Delta_0'}$ is just the identity.

Take a $0$-complex $\Delta_0'' = \mathcal{K}^{\left\langle 0 \right\rangle} + L \nu_j \in \mathcal{T}_{0}''.$  It can be obtained by a  rotation of some $0$-complex $\Delta_0'  \in \mathcal{T}_{0}'$ \emph{by the angle $\pi$ around their intersection point}, i.e. the corresponding rotation $R_{\Delta_0''}$ is $R_{\frac{k}{2}}$, the rotation by the angle $\frac{2 \pi \frac{k}{2}}{k}= \pi$ around the barycenter of $\mathcal{K}^{\left\langle 0 \right\rangle}.$

For $ x \in V\left( \mathcal{K}^{\left\langle 0 \right\rangle} + L \nu_j \right)$ we put
\begin{equation*}
\ell_0 \left(x \right) = \ell_0 \left( R_{\frac{k}{2}} \left(x-L \nu_j\right) \right).
\end{equation*}

 This definition assigns a unique label to each  vertex. Indeed,  if $x \in \Delta_0' \cap \Delta_0''$, where $\Delta_0' = \mathcal{K}^{\left\langle 0 \right\rangle} + L \nu_i$, $\Delta_0'' = \mathcal{K}^{\left\langle 0 \right\rangle} + L \nu_j$, then $x-L\nu_i$ and $x-L \nu_j$ are symmetric images of each other  in the point reflection with respect to the barycenter of $\mathcal{K}^{\left\langle 0 \right\rangle}$ so that
\begin{equation*}
\ell_0 \left(x-L \nu_i\right) =  \ell_0 \left( R_{\frac{k}{2}} \left(x-L \nu_j\right) \right)
\end{equation*}

We thus have extended $\ell_0$ from $V^{\left\langle 0 \right\rangle}_{0} = V(\mathcal{K}^{\left\langle 0 \right\rangle})$ to $V^{\left\langle 1 \right\rangle}_{0}$ in a proper way so that the assumptions of Proposition \ref{pro:glp} are satisfied. This gives the GLP  and completes the proof of the first part.

To get the opposite implication, assume that $\mathcal{K}^{\left\langle \infty \right\rangle}$ has the GLP provided by the labelling function $\ell_0$.
 %Proposition \ref{pro:glp} gives that its restriction to  $V^{\left\langle 0 \right\rangle}_{0}$  determines its values on $V_0^{\langle \infty \rangle}.$}
As $k$ is even, each $0$-complex is an image of a neighboring $0$-complex in the rotation around their intersection point  by the angle $\pi$ . This means that for each $0$-complex $\Delta_0$ the corresponding rotation $R_{\Delta_0}$ is the identity or it is equal to $R_{\frac{k}{2}}$, the rotation by the angle $\pi$ around the barycenter of $\mathcal{K}^{\left\langle 0 \right\rangle}$.

Set
\begin{equation}
\mathcal{T}_{0}{'} = \left\{ \Delta_0 \in \mathcal{T}_0 : \Delta_0 \subset \mathcal{K}^{\left\langle 1 \right\rangle}, R_{\Delta_0} = \Id \right\},
\end{equation}
\begin{equation}
\mathcal{T}_{0}{''} = \left\{ \Delta_0 \in \mathcal{T}_0 : \Delta_0 \subset \mathcal{K}^{\left\langle 1 \right\rangle}, R_{\Delta_0} = R_{\frac{k}{2}} \right\}.
\end{equation}

No two intersecting $0$-complexes can be included in the same class $\mathcal{T}_{0}{'}$ or $\mathcal{T}_{0}{''}$, because in such a situation their common vertex would have two different labels, what is not possible. Therefore the classes $\mathcal{T}_{0}'$ and $\mathcal{T}_{0}{''}$ have the desired property.
\end{proof}

\begin{example}
On the Figure \ref{fig:hexagonex} we can see %\red{??connected} vertices from $V^{\left\langle 1 \right\rangle}_{0}$ for the fractal with $k=6$, $N=42$, $L=9$. The $0$-complexes are labelled according to the examples on the right side of the figure.
 $0$-complexes forming the $1$-complex $\mathcal{K}^{\left\langle 1 \right\rangle}$ for the fractal with $k=6$, $N=42$, $L=9$. The $0$-complexes are labelled according to the examples on the right-hand side of the figure.

\begin{figure}[ht]
\centering
\includegraphics[scale=0.2]{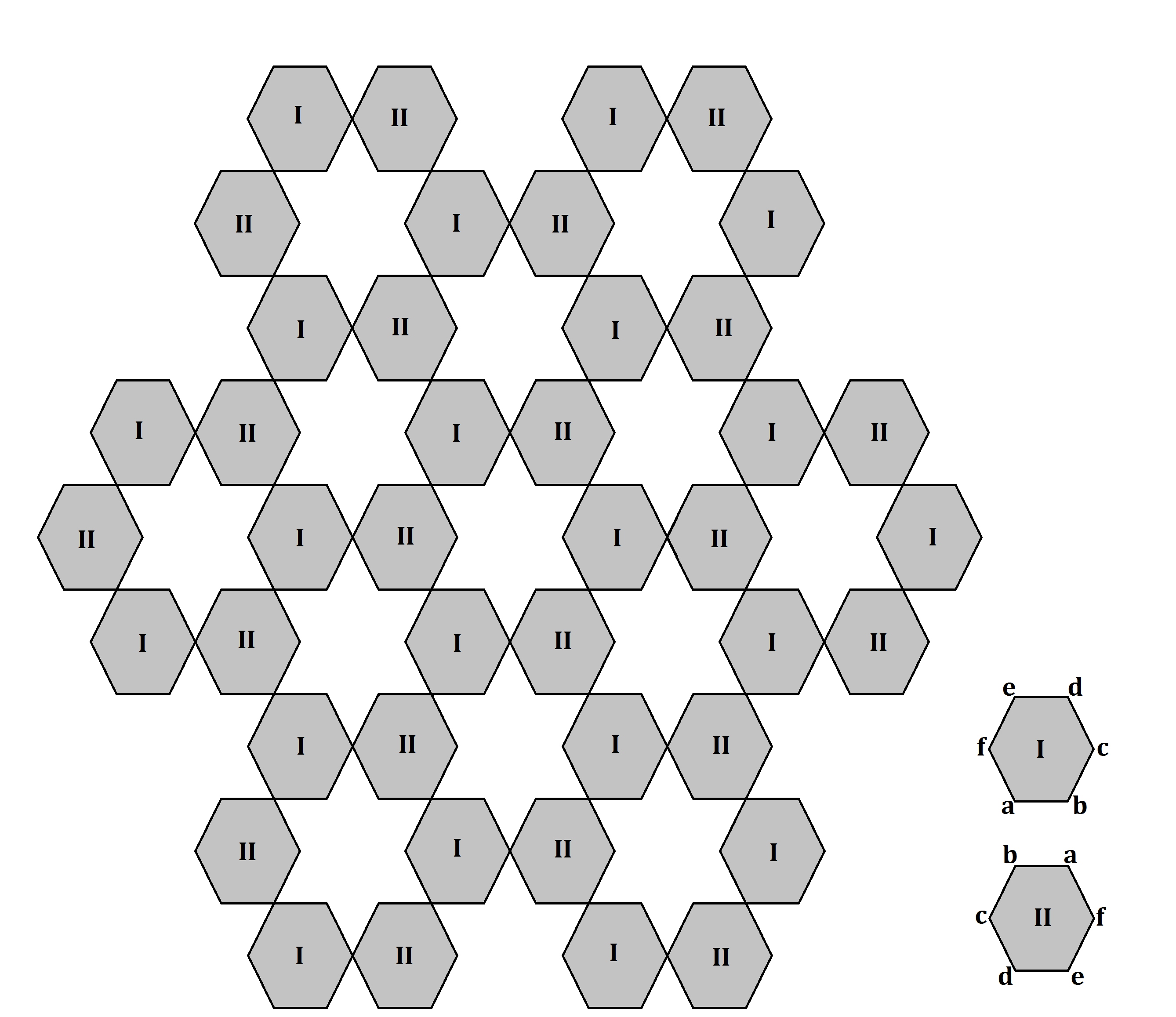}
\caption{Two classes of $0$-complexes of the fractal with hexagonal complexes and their labelling.}
\label{fig:hexagonex}
\end{figure}
\end{example}

\begin{corollary}\label{coro:squares}
If $k = 4$, then $\mathcal{K}^{\left\langle \infty \right\rangle}$ has the GLP.
\end{corollary}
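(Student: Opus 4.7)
The plan is to invoke Theorem \ref{th:even_ess}: since $k=4$ is even, it suffices to exhibit a partition of the $0$-complexes inside $\mathcal{K}^{\langle 1\rangle}$ into two classes $\mathcal{T}_0'$ and $\mathcal{T}_0''$ so that any two intersecting complexes lie in different classes. In graph-theoretic terms, I need to show that the intersection graph of $0$-complexes in $\mathcal{K}^{\langle 1\rangle}$ is bipartite.

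First I would record a purely geometric observation. By Proposition \ref{pro:plane}(1) each $0$-complex is a square with vertex set congruent to $V_0^{\langle 0\rangle}$, and by the Proposition stating $U=\Id$ all $0$-complexes in $\mathcal{K}^{\langle 1\rangle}$ are axis-aligned translates of $\mathcal{K}^{\langle 0\rangle}$ of common side length $s$. The nesting axiom (3) in Definition \ref{def:snf} forces two distinct $0$-complexes to intersect only in $V_0^{\langle 0\rangle}$-vertices. For two axis-aligned squares of side $s$ sharing exactly one corner $v$, a brief case analysis shows that they must sit in opposite quadrants around $v$ (if they sat in two adjacent quadrants they would additionally share a whole edge, violating nesting). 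Consequently, whenever $\Delta_0^{(i)}=\mathcal{K}^{\langle 0\rangle}+L\nu_i$ and $\Delta_0^{(j)}=\mathcal{K}^{\langle 0\rangle}+L\nu_j$ meet, the difference of their barycenters lies in $\{(\pm s,\pm s)\}$.

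The main step is to convert this into bipartiteness of the intersection graph. Consider any closed cycle $\Delta_0^{(0)},\Delta_0^{(1)},\ldots,\Delta_0^{(n)}=\Delta_0^{(0)}$ of pairwise intersecting complexes. Summing the center-difference vectors around the cycle telescopes to $(0,0)$, so
\begin{equation*}
\sum_{r=1}^n (\varepsilon_r^{\,x} s,\varepsilon_r^{\,y} s)=(0,0),\qquad \varepsilon_r^{\,x},\varepsilon_r^{\,y}\in\{-1,+1\}.
\end{equation*}
Looking only at the $x$-component forces $\sum_{r=1}^n \varepsilon_r^{\,x}=0$, and a signed sum of $n$ ones equals zero only if $n$ is even. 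Hence every cycle has even length, so the intersection graph is bipartite; taking $\mathcal{T}_0'$ and $\mathcal{T}_0''$ to be the two color classes (equivalently, classifying each complex by the parity of $L\nu_{i,x}/s$, which is an integer thanks to the connectivity axiom together with the step-by-step increments in $\{(\pm s,\pm s)\}$), the hypothesis of Theorem \ref{th:even_ess} is met and GLP follows.

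The only point that needs more than a sentence is the diagonal-placement lemma for two axis-aligned congruent squares meeting in a single vertex; everything else is formal. Once that lemma is noted, the parity argument is immediate and the corollary is a one-line application of Theorem \ref{th:even_ess}.
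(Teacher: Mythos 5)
Your proposal is correct and follows the same overall strategy as the paper: both reduce Corollary \ref{coro:squares} to Theorem \ref{th:even_ess} by showing that the intersection graph of the $0$-complexes inside $\mathcal{K}^{\left\langle 1 \right\rangle}$ is bipartite, and both rest on the same key geometric fact that two intersecting $0$-complexes are congruent, equally oriented squares meeting at a single corner, hence placed diagonally, so that their barycenters differ by a vector in $\{(\pm s,\pm s)\}$. Where you differ is in how bipartiteness is extracted from this fact. The paper writes each cell as $\mathcal{K}^{\left\langle 0 \right\rangle}+n_1e_1+n_2e_2$ with $n_1\equiv n_2 \pmod 2$ and defines the two classes by the parity of $\frac{n_1+n_2}{2}$; as literally written this does not separate all intersecting cells, because two diagonal neighbours may differ by $\pm(e_1-e_2)$, which leaves $\frac{n_1+n_2}{2}$ unchanged (for the Vicsek cross the central cell and two of the corner cells land in the same class even though they intersect). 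The invariant you propose in your parenthesis --- the parity of a single coordinate of the translation vector in units of $s$ --- is the correct one, since it flips under every step $\pm e_1\pm e_2$; and your cycle-parity argument (the first coordinate of the telescoping sum around a closed walk is a signed sum of $n$ copies of $s$, which vanishes only for even $n$) yields bipartiteness without naming any invariant at all. Either version repairs the paper's displayed class definitions. One caveat, which you share with the paper: the step ruling out two cells meeting in two vertices (an entire side of the squares) is justified only by an appeal to nesting, and the nesting axiom as stated constrains the intersection of the fractal cells, not of their convex hulls; this single-point-intersection property for distinct cells is standard for nested fractals and is used implicitly throughout the paper (e.g.\ in the notion of rank and in the proof of Theorem \ref{th:even_ess}), so your argument is at the same level of rigour as the paper's on this point.
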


\begin{proof}
If there are four essential fixed points, then the vertices of any complex form a square. Without loss of generality we can assume that
\begin{equation}
V^{\left\langle 0\right\rangle}_{0} = \left\{ \left(0,0\right), \left(1,0\right), \left(1,1\right),\left(0,1\right)\right\}
\end{equation}
and set
\begin{equation*}
\ell_0\left(\left(0,0\right)\right) = a, \quad \ell_0\left(\left(1,0\right)\right) = b, \quad \ell_0\left(\left(1,1\right)\right) = c, \quad \ell_0\left(\left(0,1\right)\right) = d.
\end{equation*}

Observe that $V^{\left\langle \infty \right\rangle}_{0} \subset \mathbb{Z}e_1 + \mathbb{Z}e_2$, where $e_1=\left(0,1\right)$ and $e_2 = \left(1,0\right)$.

Let $\Delta_0= \mathcal{K}^{\left\langle 0 \right\rangle} + \sum_{j=1}^{J} L^{j} \nu_{i_j} $ be a $0$-complex. Then it can be also uniquely represented as
\begin{equation*}
\Delta_0  =  \mathcal{K}^{\left\langle 0 \right\rangle} + n_1 e_1 + n_2 e_2
\end{equation*}
for some $n_1, n_2 \in \mathbb{Z}$.

Due to the nesting property, $n_1$ and $n_2$ are either both odd or both even, as otherwise the neighboring complexes would share a common side, not only the vertices. It allows us to use the representation
\begin{equation}
\label{eq:squareaddress}
\Delta_0 =  \mathcal{K}^{\left\langle 0 \right\rangle} + \frac{n_1+n_2}{2} \left(e_1 + e_2\right) + \frac{n_1-n_2}{2} \left(e_1 - e_2\right),
\end{equation}
and then set the two classes of $0$-complexes as follows:
\begin{equation}
\mathcal{T}_{0}{'} = \left\{  \Delta_0 \in \mathcal{T}_0 : \Delta_0 = \mathcal{K}^{\left\langle 0 \right\rangle} + \frac{n_1+n_2}{2} \left(e_1 + e_2\right) + \frac{n_1-n_2}{2} \left(e_1 - e_2\right) \in \mathcal{K}^{\left\langle 1 \right\rangle} ,  \frac{n_1+n_2}{2} \in 2\mathbb{Z} \right\},
\end{equation}
\begin{equation}
\mathcal{T}_{0}{''} = \left\{  \Delta_0 \in \mathcal{T}_0 : \Delta_0 = \mathcal{K}^{\left\langle 0 \right\rangle} + \frac{n_1+n_2}{2} \left(e_1 + e_2\right) + \frac{n_1-n_2}{2} \left(e_1 - e_2\right) \in \mathcal{K}^{\left\langle 1 \right\rangle} ,  \frac{n_1+n_2}{2} \in 2\mathbb{Z}+1 \right\}.
\end{equation}

Clearly a complex with odd coefficients in the  representation \ref{eq:squareaddress} can intersect only these with even coefficients and vice versa.
\end{proof}

\begin{example}
On Figure \ref{fig:squareex} we see the $0$-complexes forming the $1$-complex $\mathcal{K}^{\left\langle 1 \right\rangle}$ for the Vicsek cross ($k=4$, $N=5$, $L=3$). With $\ell_0$ given on $V^{\left\langle 0 \right\rangle}_{0}$ (labelling of the vertices of bottom left square), we can label all vertices from $V^{\left\langle 1 \right\rangle}_{0}$ in the way preserving the orientation of labels on each $0$-complex.

%\red{??connected} vertices from $V^{\left\langle 1 \right\rangle}_{0}$ for the Vicsek cross ($k=4$, $N=5$, $L=3$). With  $\ell_0$ given on $V^{\left\langle 0 \right\rangle}_{0}$ (labelling of the vertices of bottom left square), we can label all vertices from $V^{\left\langle 1 \right\rangle}_{0}$ in the way preserving the orientation of labels on each $0$-complex.

\begin{figure}[ht]
\centering
\includegraphics[scale=0.4]{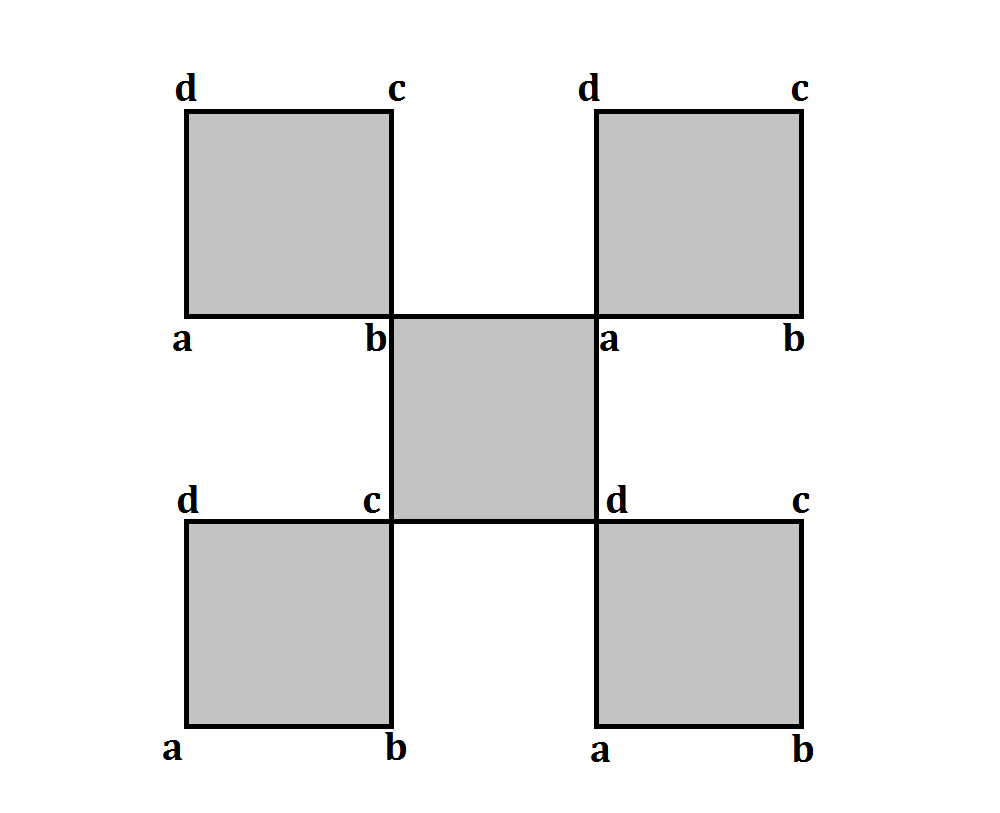}
\caption{Values of $\ell_0$ on $V^{\left\langle 1 \right\rangle}_{0}$ for the Vicsek cross.}
\label{fig:squareex}
\end{figure}
\end{example}

Summarizing, the fractals not considered in any of the theorems above are the fractals for which $k$ is odd 
and $N>k$. Below we present an example of such a fractal that cannot be well-labelled.

\begin{example}
 Figure \ref{fig:nonagon2} presents the shape of $\mathcal{K}^{\left\langle 1 \right\rangle}$ of a fractal with nonagonal complexes ($k=9$) and $N=54$.

\begin{figure}[ht]
\centering
\includegraphics[scale=0.1]{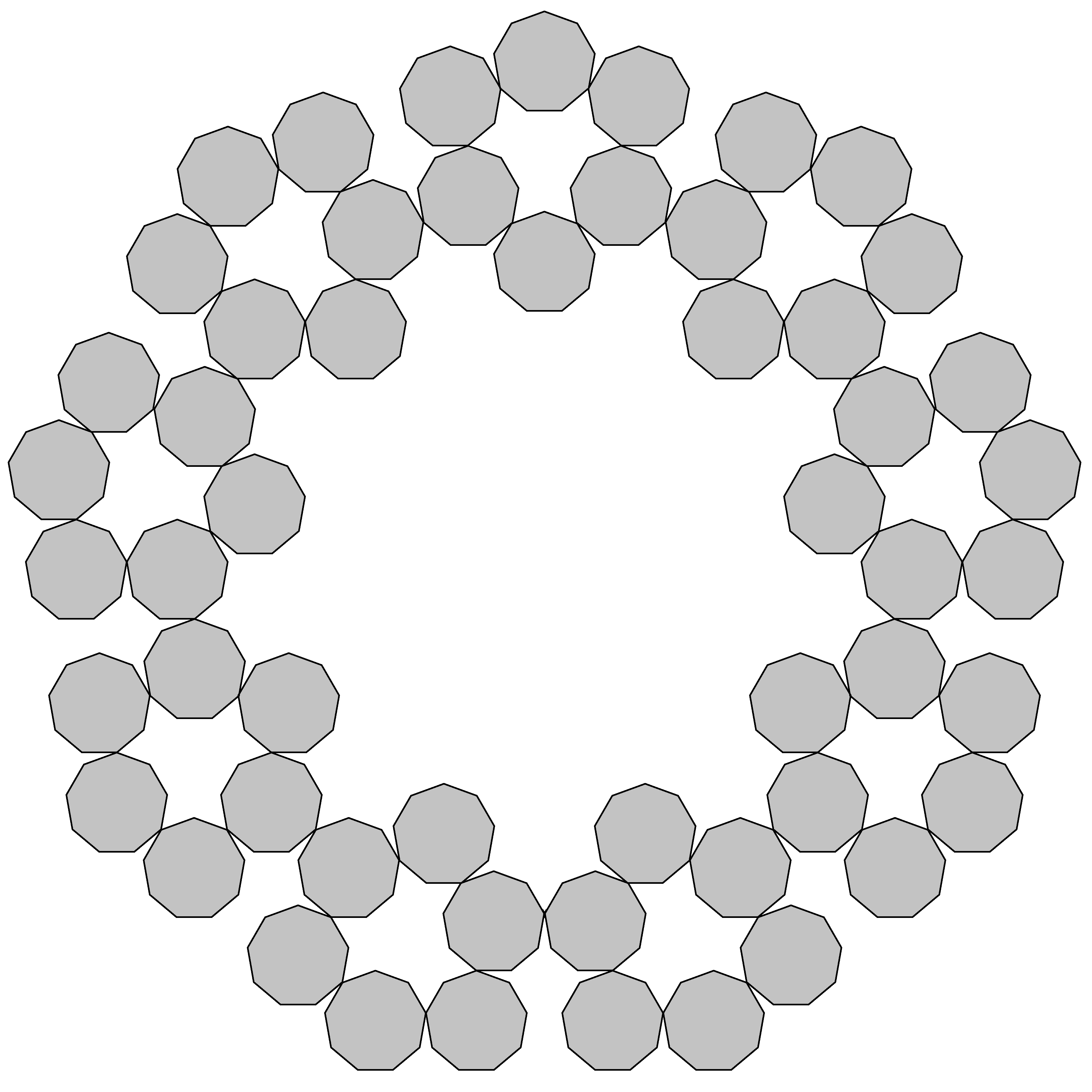}
\caption{Connected vertices from $V^{\left\langle 1 \right\rangle}_{0}$ for the fractal with nonagonal complexes.}
\label{fig:nonagon2}
\end{figure}

 Figure \ref{fig:nonagon1} is a close up of the part near one vertex, which itself cannot be well labelled.

\begin{figure}[ht]
\centering
\includegraphics[scale=0.2]{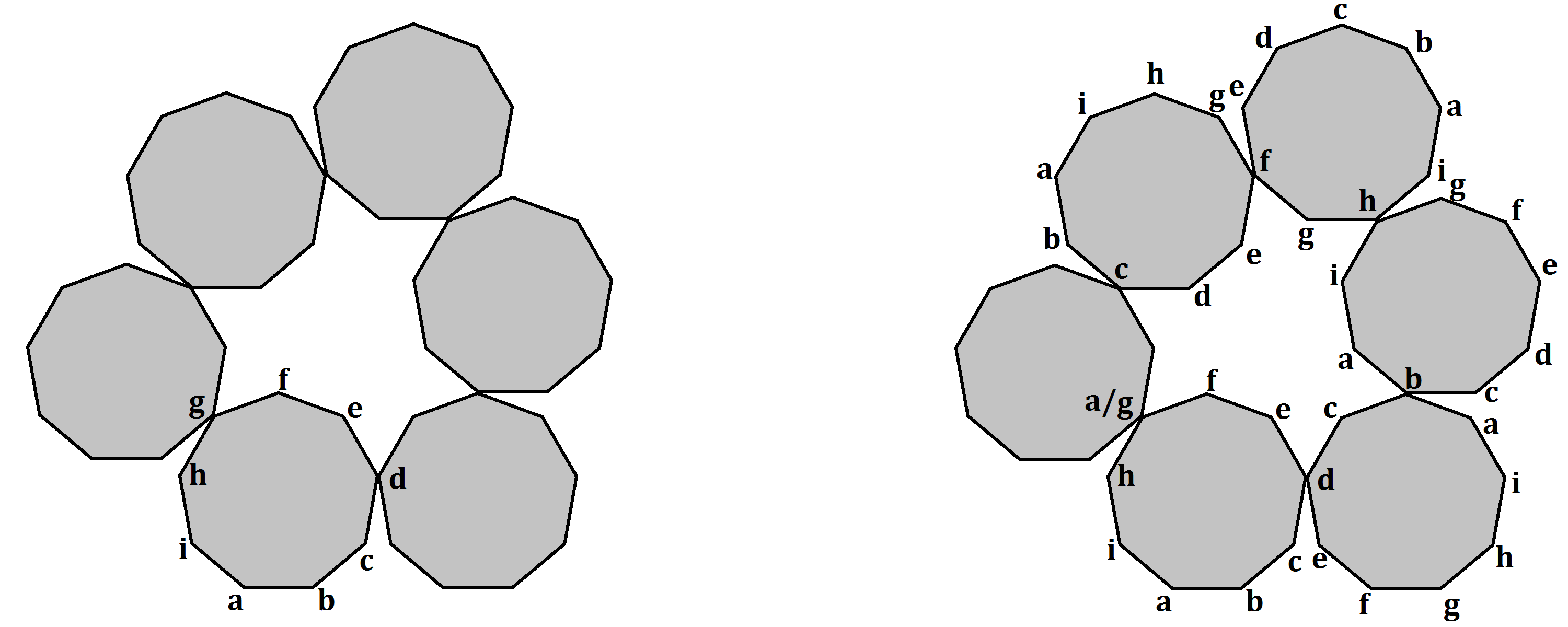}
\caption{The attempt to label vertices of six $0$-complexes of the fractal above.}
\label{fig:nonagon1}
\end{figure}

%We label the vertices of the bottom leftmost complex counter-clockwise using labels $a, b, c, ..., i$. The label put on the common vertex of this complex and its right-side neighbor is $d$, so we label all other vertices of that complex counter-clockwise beginning with the vertex labelled as $d$. We expand the labelling on three other complexes taking into account the \red{nie rozumiem o co chodzi} fact that by each time we have set the label of the vertex in the intersection with previuously labelled complex (in our case these are labeles $b$, $h$ and $f$).

We label the vertices of the bottom leftmost complex counter-clockwise using labels $a, b, c, ..., i$. Then we put labels on adjacent complexes according to a proper rotation.

If there existed a good labelling of this fractal, then (because of its uniqueness) we would have obtained it by such labelling. But let us see that the last unlabelled complex has a vertex with label $c$ in the intersection with its top neighbor. This means that the vertex in its intersection with another complex should be labelled as $a$, while it is already labelled as $g$. The vertex cannot have two labels, therefore the good labelling of such fractal is impossible.

\end{example}

\section{Reflected Brownian motion on USNFs}\label{sec:reflected}

\subsection{The process on $\mathcal K^{\langle\infty\rangle}$.}

Let $Z=(Z_t, \mathbf{P}^{x})_{t \geq 0, \, x \in \mathcal{K}^{\left\langle \infty \right\rangle}}$ be \emph{the Brownian motion} on the USNF $\mathcal{K}^{\left\langle \infty \right\rangle}$ \cite{bib:Lin, bib:Kus2}. Such a process has been constructed by means of Dirichlet forms \cite{bib:Fuk1, bib:Kum}. It is a strong Markov, Feller process with continuous trajectories, whose distributions are invariant under local isometries of $\mathcal{K}^{\left\langle \infty \right\rangle}$. It has transition probability densities $g(t,x,y)$ with respect to the $d_f$-dimensional Hausdorff measure $\mu$ on $\mathcal{K}^{\left\langle \infty \right\rangle}$ (recall that $\mu(\mathcal{K}^{\left\langle 0 \right\rangle}) = 1$). More precisely, one has
$$
\mathbf{P}^{x}(Z_t \in A) = \int_A g(t,x,y) \mu(dy), \quad t > 0, \ \ x \in \mathcal{K}^{\left\langle \infty \right\rangle}, \ \ A \subset \cB(\mathcal{K}^{\left\langle \infty \right\rangle}).
$$
Densities $g(t,x,y)$ are jointly continuous on $(0,\infty) \times \mathcal{K}^{\left\langle \infty \right\rangle} \times \mathcal{K}^{\left\langle \infty \right\rangle}$ and satisfy the scaling property
$$
g(t,x,y) = L^{d_f} g(L^{d_w} t, L x, L y), \quad t>0, \ \ x, y \in \mathcal{K}^{\left\langle \infty \right\rangle}.
$$
Moreover, they enjoy the following \emph{sub-Gaussian estimates}: there are absolute constants $C_1,...,C_4 >0$ such that \cite[Theorems 5.2, 5.5]{bib:Kum}
\begin{multline}
\label{eq:kum}
C_{1} t^{-d_s/2} \exp \left(-C_{2} \left(\frac{\left|x-y \right|^{d_w}}{t} \right)^{\frac{1}{d_J -1}} \right) \leq g(t,x,y) \\
\leq C_{3} t^{-d_s/2} \exp \left(-C_{4} \left(\frac{\left|x-y \right|^{d_w}}{t} \right)^{\frac{1}{d_J -1}} \right), \quad t>0, \ \ x,y \in \mathcal{K}^{\left\langle \infty \right\rangle}.
\end{multline}
Recall that $d_w$ and $d_s=2d_f/d_w$ are the walk and the spectral dimensions of $\mathcal{K}^{\left\langle \infty \right\rangle}$, respectively, and $d_J > 1$ is a so-called \emph{chemical exponent} of $\mathcal{K}^{\left\langle \infty \right\rangle}$. The regularity properties of the densities $g$ and the bounds \eqref{eq:kum} has been established by T. Kumagai for general nested fractals under the following assumption. Let $d_M$ denote the graph metric of order $M$ on $\mathcal{K}^{\left\langle \infty \right\rangle}$.
Then there exists $n \in \mathbb{N}$ such that for any $M\in\mathbb Z,$  if $x,y \in \mathcal{K}^{\left\langle \infty \right\rangle}$ satisfy $\left|x-y\right| \leq L ^{M}$, then $d_M(x,y) \leq n$ (\cite[Sec. 5]{bib:Kum}).
In our setting this assumption is always satisfied. See comments following Lemma \ref{lem:metrics} in the Appendix.
The constant $d_J$ has been introduced in the cited paper as a parameter describing the \emph{shortest path scaling} on a given nested fractal %\cite[Sec. 5]{bib:Bar}.
Typically $d_w \neq d_J$, but it is known that in the case of Sierpi\'nski gasket one has $d_w = d_J$. Note that very often, due to very rich geometric structure of $\mathcal{K}^{\left\langle \infty \right\rangle}$, the shortest path (or geodesic) metric cannot be well defined.

Diffusion processes on fractals with transition densities having sub-Gaussian estimates are often called \emph{fractional diffusions} \cite{bib:Bar}. %For properties of such processes we refer to the papers \cite{bib:,bib:}.

\subsection{Construction of the reflected Brownian motion}

Suppose now that the unbounded fractal $\mathcal K^{\langle\infty\rangle}$ has the GLP. For an arbitrary $M \in \Z$ we will define the \emph{reflected Brownian motion} on an $M$-complex $\mathcal{K}^{\left\langle M\right\rangle}.$  Indeed, as it will be seen below the existence of such a process is a consequence of the well-definiteness of the projection operation $\pi_M: \mathcal{K}^{\left\langle \infty  \right\rangle} \to \mathcal{K}^{\left\langle M\right\rangle}$ introduced in Section 3.  %In some sense, all the necessary and crucial work has been done in Section 3, where the GLP and $\pi_M$ have been investigated.

We will first construct a regular enough version of the transition probability densities for the process in question.
Our construction is a generalization of that in \cite{bib:KPP-PTRF}, which was performed for the unit complex of the planar Sierpi\'nski triangle (see also \cite{bib:KaPP2}). We would like to emphasize that our present case of general USNFs with GLP is much more delicate and it requires substantial modifications of the previous argument.

Throughout this section we assume that $M \in \Z$ is arbitrary, but fixed. The reflected Brownian motion on $\mathcal{K}^{\left\langle M\right\rangle}$ is defined canonically by
\begin{equation}\label{eq:process-def}
Z_t^M = \pi_M(Z_t),
\end{equation}
where $\pi_M: \mathcal{K}^{\left\langle \infty  \right\rangle} \to \mathcal{K}^{\left\langle M\right\rangle}$ is the projection from in Section 3. Formally, we will investigate the stochastic process $(Z_t^M, \mathbf{P}^{x}_{M})_{t \geq 0, \, x \in \mathcal{K}^{\left\langle M\right\rangle}}$, where the measures $\mathbf{P}^{x}_{M}$, $x \in \mathcal{K}^{\left\langle M\right\rangle}$, are defined as the projections of the measures $\mathbf{P}^{x}$, $x \in \mathcal{K}^{\left\langle M\right\rangle}$, determining the distribution of the free Brownian motion. The finite dimensional distributions
of $Z^M$ are given by
\begin{align} \label{eq:fdd_reflected}
\mathbf{P}^{x}_{M}(Z^M_{t_1} \in A_1, ... , Z^M_{t_n} \in A_n)= \mathbf{P}^{x}(Z_{t_1} \in \pi_M^{-1}(A_1), ... , Z_{t_n} \in \pi_M^{-1}(A_n)),
\end{align}
for every $0 \leq t_1 < t_2 <...< t_n$, $x \in \mathcal{K}^{\left\langle M\right\rangle}$ and $A_1,...,A_n \in \cB(\mathcal{K}^{\left\langle M\right\rangle})$. Note that in fact the projections of the measures $\mathbf{P}^{x}$ (denoted by $\pi_M(\mathbf{P}^{x})$) are well defined for every $x \in \mathcal{K}^{\left\langle \infty \right\rangle}$ and the right hand side of \eqref{eq:fdd_reflected} defines the finite dimensional distributions for such measures in general case.

From the definition of the measures $\mathbf{P}^{x}_{M}$ it is obvious that the one-dimensional distributions of the process $Z^M$ are absolutely continuous with respect to the Hausdorff measure $\mu$ restricted to the complex $\mathcal{K}^{\left\langle M\right\rangle}$.

Define the function $g_M(t,x,y): (0,\infty) \times \mathcal{K}^{\left\langle \infty \right\rangle} \times \mathcal{K}^{\left\langle M\right\rangle} \to (0,\infty)$ by
\begin{equation}
\label{eq:refldens}
g_{M}\left(t,x,y\right)= \left\{ \begin{array}{ll}
\displaystyle\sum_{y'\in \pi_{M}^{-1} (y)}{ g(t,x,y')} & \textrm{if } y \in \mathcal{K}^{\left\langle M\right\rangle} \backslash V_{M}^{\left\langle M\right\rangle}, \\
\displaystyle\sum_{y'\in \pi_{M}^{-1} (y)}{g(t,x,y')} \cdot \textrm{rank}(y') & \textrm{if } y \in V_{M}^{\left\langle M\right\rangle}, \\
\end{array}\right.
\end{equation}
where $\textrm{rank}(y')$ is the number of $M$-complexes meeting at the point $y'$. We see from \eqref{eq:fdd_reflected} that the functions $g_M(t,x,\cdot)$, $x \in \mathcal{K}^{\left\langle M\right\rangle}$, are indeed versions of the densities of the measures
$$
P_M(t,x,A) = \mathbf{P}^{x}(Z_t \in \pi_M^{-1}(A)), \quad t >0, \ \ x \in \mathcal{K}^{\left\langle M\right\rangle}, \ \ A \in \cB(\mathcal{K}^{\left\langle M\right\rangle}),
$$
which are natural candidates for the transition probabilities of the process $Z^M$ (observe that  $g_M(t,x,\cdot)$ are versions of densities for the projected measures $\mathbf{P}^{x}(Z_t \in \pi_M^{-1}(\cdot))$ for every $x \in \mathcal{K}^{\left\langle \infty\right\rangle}$). We will prove below that this choice of $g_M$ will provide us with further regularity properties of $Z^M$ like Markov, Feller and strong Feller properties. We also would like to note that the definition and the regularity of $g_M$ strongly depend on the geometric properties of a given USNF. If $k=3$, then for any vertex $y',$  $\textrm{rank}(y') \in \left\{1,2,3\right\}$, and if $k\geq 4$, then $\textrm{rank}(y') \in \left\{1,2\right\}$, and they can vary from point to point. For the unbounded one-sided Sierpi\'nski triangle, every vertex outside the origin has rank $2$, and so the situation is much simpler than the general one.

We are now in a  position to state our main result in this section.  For $t>0$ and $f \in L^{\infty}(\mathcal{K}^{\left\langle M\right\rangle})$ let $$T_t^M f(x) = \int_{\mathcal{K}^{\left\langle M\right\rangle}} g_M(t,x,y) f(y)  \mu(dy),  \quad x \in \mathcal{K}^{\left\langle M\right\rangle}.$$

\begin{theorem} \label{thm:main1}
Let $\mathcal K^{\langle\infty\rangle}$ be an USNF with the GLP.
Let $M\in\mathbb Z.$ For the functions $g_M$ defined in \eqref{eq:refldens}
the following hold.
\begin{itemize}
\item[(1)] The function $g_M(t,x,y)$ is continuous on $(0,\infty) \times \mathcal{K}^{\left\langle M\right\rangle} \times \mathcal{K}^{\left\langle M\right\rangle}$ and bounded on {$[u,v] \times \mathcal{K}^{\left\langle M\right\rangle} \times \mathcal{K}^{\left\langle M\right\rangle}$, for every $0<u < v <\infty$}. In particular, $T_t^M \big(L^{\infty}(\mathcal{K}^{\left\langle M\right\rangle}) \subset C_b(\mathcal{K}^{\left\langle M\right\rangle})$, for every $t>0$.
\item[(2)] For every $f \in C_b(\mathcal{K}^{\left\langle M\right\rangle})$ we have $\left\|T^M_t f - f \right\|_{\infty} \to 0$ as $t \to 0^{+}$.
\item[(3)] For every $t, s >0$  and $x, y \in \mathcal{K}^{\left\langle M\right\rangle}$ we have
$$
g_M(t+s,x,y) = \int_{\mathcal{K}^{\left\langle M\right\rangle}} g_M(t,x,z) g_M(s,z,y) \mu(dz).
$$
\item[(4)] For every $t >0$  and $x, y \in \mathcal{K}^{\left\langle M\right\rangle}$ we have
$$
g_M(t,x,y) = g_M(t,y,x).
$$
%****************THIS IS NOT TRUE!!!************************************************
%\item[(5)] For every $t >0$ and $x, y \in \mathcal{K}^{\left\langle M\right\rangle}$ we have
%$$
%g_M(t,x,y) = L^{d_f} g_{M+1}(L^{d_w} t, L x, L y).
%$$
\end{itemize}
\end{theorem}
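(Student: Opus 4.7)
The plan is to establish the four parts by combining the sub-Gaussian estimates \eqref{eq:kum}, the Markov and symmetry properties of the free process $Z$, and the geometric compatibility of $\pi_M$ with local isometries of $\mathcal K^{\langle\infty\rangle}$ afforded by the GLP. Throughout I will exploit the equivalent formulation
\[ g_M(t,x,y)=\sum_{\Delta\in\mathcal T_M} g\bigl(t,x,\widetilde\pi_\Delta(y)\bigr), \]
the rank weights in \eqref{eq:refldens} being exactly what is needed for the sum over $M$-complexes to count each preimage $y'\in\pi_M^{-1}(y)$ with the correct multiplicity (once for each complex containing it). For part (1), uniform convergence of this series on compact subsets of $(0,\infty)\times\mathcal K^{\langle M\rangle}\times\mathcal K^{\langle M\rangle}$ follows by coupling the sub-Gaussian upper bound in \eqref{eq:kum} with the Appendix's estimate on $|\mathcal L_{M,n,x}|$ and the lower bound $|x-y'|\gtrsim nL^M$ when $y'$ lies in a complex at graph distance $n$ from $\Delta_M(x)$ (Lemma \ref{lem:metrics}): the stretched-exponential factor beats the polynomial growth of $|\mathcal L_{M,n,x}|$, and $\textrm{rank}(\cdot)\leq 3$ is a harmless bounded weight. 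Joint continuity and boundedness on $[u,v]\times\mathcal K^{\langle M\rangle}\times\mathcal K^{\langle M\rangle}$ follow at once from continuity of each summand, and $T_t^M(L^\infty)\subset C_b$ is then dominated convergence, using $\mu(\mathcal K^{\langle M\rangle})<\infty$.

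For part (2), first verify mass conservation $\int_{\mathcal K^{\langle M\rangle}} g_M(t,x,y)\mu(dy)=1$ by unfolding each complex through $\widetilde\pi_\Delta$ and recalling that $V_M^{\langle\infty\rangle}$ has $\mu$-measure zero (so the rank factor is invisible at the level of integrals). For $f\in C_b(\mathcal K^{\langle M\rangle})$, which is uniformly continuous by compactness of $\mathcal K^{\langle M\rangle}$, split
\[ T_t^M f(x)-f(x)=\int_{\mathcal K^{\langle M\rangle}} g_M(t,x,y)\bigl(f(y)-f(x)\bigr)\mu(dy) \]
into the integrals over $\{|y-x|<\delta\}$ and its complement; the first is at most the modulus of continuity of $f$ at scale $\delta$, uniformly in $x$, while the second is bounded by $2\|f\|_\infty\,\mathbf P^x(|\pi_M(Z_t)-x|\geq\delta)$. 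Since $\pi_M$ is isometric on each complex, the latter is dominated by $\mathbf P^x(|Z_t-x|\geq c\delta)$, and \eqref{eq:kum} makes this tend to $0$ uniformly in $x$ as $t\to 0^+$.

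Part (3) is the principal obstacle. Starting from the Markov property of $Z$, one has
\[ g_M(t+s,x,y)=\int_{\mathcal K^{\langle\infty\rangle}} g(t,x,w)\,g_M(s,w,y)\,\mu(dw), \]
and I would reduce the outer integral to $\mathcal K^{\langle M\rangle}$ via the \emph{projection invariance}
\[ g_M(s,w,y)=g_M(s,\pi_M(w),y),\qquad w\in\mathcal K^{\langle\infty\rangle}. \]
This identity is the technical heart of the theorem, and it rests on tracking the joint law of the consecutive hitting times of the grid $V_M^{\langle\infty\rangle}$ by $Z$ and of the labels of the vertices visited at those times (Lemma \ref{lem:lawseq}). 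Proceeding by induction on the number of hits, and using the invariance of $Z$ under reflections through axes of $M$-complexes, one shows that both the law of the hitting-time increments and the conditional distribution of the next label depend on the current position only through its $\pi_M$-image, with the GLP ensuring that the labels glue consistently across all copies of $\mathcal K^{\langle M\rangle}$. Given the invariance, partitioning $\mathcal K^{\langle\infty\rangle}=\bigcup_\Delta\Delta$, changing variables $w=\widetilde\pi_\Delta(z)$ on each $\Delta$, and using the reformulation of $g_M$ from the first paragraph collapses the sum over $\Delta$ into $g_M(t,x,z)$ and yields the Chapman--Kolmogorov identity.

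Finally, for part (4), apply the symmetry $g(t,x,\widetilde\pi_\Delta(y))=g(t,\widetilde\pi_\Delta(y),x)$ of the free heat kernel in the expansion from the first paragraph to get $g_M(t,x,y)=\sum_\Delta g(t,\widetilde\pi_\Delta(y),x)$, and then invoke the projection invariance of part (3) with the roles of the two spatial arguments exchanged — now folding the first argument — to reindex the sum as $\sum_\Delta g(t,y,\widetilde\pi_\Delta(x))=g_M(t,y,x)$. The rank weights in \eqref{eq:refldens} are indispensable at this step: without them the two sides would be off by the differing local multiplicities of vertices in the fibers of $x$ and of $y$. A rigorous implementation uses the graph-metric estimates of the Appendix to justify the absolute convergence needed to swap sums and regroup, which is exactly the content of Lemma \ref{lem:symmetry}.
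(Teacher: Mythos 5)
Your overall architecture matches the paper's: part (1) is the paper's Lemma \ref{lem:properties} (your rewriting of \eqref{eq:refldens} as the sum over complexes $\sum_{\Delta}g(t,x,\widetilde{\pi}_{\Delta}(y))$ is correct and even streamlines the continuity argument at vertices), part (2) is a near/far split using uniform continuity as in the paper, and part (3) correctly isolates the projection invariance $g_M(s,w,y)=g_M(s,\pi_M(w),y)$ (Theorem \ref{thm:transition}) as the technical heart, to be combined with the unfolding $\int_{\mathcal K^{\langle\infty\rangle}}=\sum_\Delta\int_\Delta$. But your sketch passes over the two genuinely hard points. First, in Lemma \ref{lem:lawseq} the difficult case is $x,y\in V_M^{\langle\infty\rangle}$ with $\pi_M(x)=\pi_M(y)$ but $\mathrm{rank}(x)\neq\mathrm{rank}(y)$: then no local isometry (reflection or otherwise) carries a neighbourhood of $x$ onto one of $y$, so ``invariance of $Z$ under reflections'' cannot close the induction. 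The paper resolves this by passing to the decimation-invariant random walks $Y^m$ and conditioning on the sequence of $M$-complexes entered after each return to the starting vertex; the combinatorial factor $r^{i+1}$ from summing over the $r=\mathrm{rank}$ choices exactly cancels the probability $r^{-(i+1)}$ of a fixed choice, which is what makes the law of $\bigl(T_M^{(1)},\ell_M(Z_{T_M^{(1)}})\bigr)$ rank-independent. Your proposal needs this (or an equivalent) argument.

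Second, part (4) as you describe it does not go through. Theorem \ref{thm:transition} is an identity between fiber sums, not between individual kernel values, so it lets you change the first argument of $g_M$ but not of $g$; ``folding the first argument'' does not reindex the single sum $\sum_\Delta g(t,\widetilde{\pi}_{\Delta}(y),x)$ into $\sum_\Delta g(t,y,\widetilde{\pi}_{\Delta}(x))$ --- there is no term-by-term correspondence, and the full double sum $\sum_{x'\in\pi_M^{-1}(x)}\sum_{y'\in\pi_M^{-1}(y)}g(t,x',y')$ that would make the symmetry manifest diverges (each inner sum equals the same positive constant for infinitely many $x'$). The paper's Lemma \ref{lem:symmetry} instead represents $g_M(t,x,y)$ as the limit \eqref{eq:sym2} of the \emph{normalized truncated} double sum $\frac{\mu(\mathcal K^{\langle M\rangle})}{\mu(\mathcal K^{\langle n\rangle})}\sum_{A(n,x,y)}g(t,x',y')$ over pairs of preimages both lying in $\mathcal K^{\langle n\rangle}$, which is symmetric in $x$ and $y$ by inspection; the work is then to show that the boundary correction $\beta_n$ (pairs with $x'\in\mathcal K^{\langle n\rangle}$ but $y'\notin\mathcal K^{\langle n\rangle}$) vanishes, split according to whether $x'$ is near or far from $V(\mathcal K^{\langle n\rangle})$ and controlled by the graph-metric estimates. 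This truncation-and-renormalization is a genuine missing idea, not a routine ``swap of sums.'' (A minor slip in part (1): Lemma \ref{lem:metrics} yields $|x-y'|\gtrsim n^{1/d_f}L^M$ at graph distance $n$, not $nL^M$; the stretched exponential still dominates, so your conclusion stands.)
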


The next theorem is a direct consequence of the above result.

\begin{theorem} \label{thm:main2}
The process $(Z_t^M, \mathbf{P}^{x}_{M})_{t \geq 0, \, x \in \mathcal{K}^{\left\langle M\right\rangle}}$ defined by \eqref{eq:process-def} is a continuous Markov process with transition probability densities $g_M(t,x,y)$, which is Feller and strong Feller.
\end{theorem}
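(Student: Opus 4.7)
The plan is to upgrade the four analytic properties of $g_M$ in Theorem \ref{thm:main1} to the sample-path and semigroup-level statements about $Z^M$, using the definitions \eqref{eq:process-def} and \eqref{eq:fdd_reflected}. Since $\mathcal{K}^{\langle M\rangle}$ is compact, $C_b(\mathcal{K}^{\langle M\rangle})=C(\mathcal{K}^{\langle M\rangle})$ throughout, and $L^\infty(\mathcal{K}^{\langle M\rangle})$ contains $C(\mathcal{K}^{\langle M\rangle})$.

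I would begin with continuity of trajectories. As $Z$ has continuous paths under every $\mathbf{P}^x$, it suffices to verify that $\pi_M$ is continuous on $\mathcal{K}^{\langle \infty\rangle}$. On the interior of any $M$-complex $\pi_M$ coincides with the rigid motion $x \mapsto R_{\Delta_M}(x-\nu_{\Delta_M})$. At a vertex $v\in V_M^{\langle \infty\rangle}$ where several $M$-complexes meet, the good labelling condition \eqref{eq:rot} forces each of the associated rigid motions to carry $v$ to the unique vertex of $V_M^{\langle M\rangle}$ bearing the label $\ell_M(v)$, so the piecewise definition glues continuously across $v$. Composition with $Z$ then yields continuous paths for $Z^M$ almost surely.

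Next I would identify $g_M$ as the transition density and derive the Markov property. For a Borel set $A \subset \mathcal{K}^{\langle M\rangle}$ the preimage $\pi_M^{-1}(A)$ decomposes, up to the $\mu$-null fibre over $V_M^{\langle M\rangle}$, as the disjoint union $\bigsqcup_{\Delta_M \in \mathcal{T}_M} \widetilde{\pi}_{\Delta_M}(A)$; the maps $\widetilde{\pi}_{\Delta_M}$ are isometries that preserve $\mu$, so a change of variables gives
\begin{equation*}
\mathbf{P}^x(Z_t \in \pi_M^{-1}(A)) = \int_A \sum_{y'\in\pi_M^{-1}(y)} g(t,x,y')\,\mu(dy) = \int_A g_M(t,x,y)\,\mu(dy),
\end{equation*}
the $\mathrm{rank}(y')$ factor in \eqref{eq:refldens} being invisible in the integrated law because vertices are $\mu$-negligible, yet essential for the pointwise continuity of $g_M$ from Theorem \ref{thm:main1}(1). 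Combining this with \eqref{eq:fdd_reflected}, the Markov property of $Z$, and the Chapman--Kolmogorov identity of Theorem \ref{thm:main1}(3), an induction on $n$ rewrites the general finite-dimensional distribution as an iterated integral against the kernels $g_M(t_{i+1}-t_i,\cdot,\cdot)$, which is exactly the Markov property with transition kernel $P_t^M(x,dy) = g_M(t,x,y)\,\mu(dy)$. The one auxiliary fact needed for the induction step is $g_M(t,y',y) = g_M(t,\pi_M(y'),y)$ for $y' \in \mathcal{K}^{\langle \infty\rangle}$ and $y \in \mathcal{K}^{\langle M\rangle}$; this follows from the invariance of the distributions of $Z$ under local isometries of $\mathcal{K}^{\langle \infty\rangle}$ applied fibre-by-fibre to $\pi_M^{-1}(y)$, and is the only slightly delicate point of the whole argument.

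Finally, the regularity statements are read off directly. Theorem \ref{thm:main1}(1) says $T_t^M$ maps $L^\infty(\mathcal{K}^{\langle M\rangle})$ into $C(\mathcal{K}^{\langle M\rangle})$, which is precisely the strong Feller property. In particular $T_t^M$ preserves $C(\mathcal{K}^{\langle M\rangle})$, and combined with the uniform convergence $\|T_t^M f - f\|_\infty \to 0$ of Theorem \ref{thm:main1}(2) this yields the Feller property. The main obstacle is the auxiliary identity used to transport the Markov property across the projection; once it is in place, everything else reduces to a direct appeal to the preceding theorem.
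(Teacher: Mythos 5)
Your overall route is the same as the paper's: Theorem \ref{thm:main2} is read off from Theorem \ref{thm:main1} — Chapman--Kolmogorov for the Markov property, parts (1)--(2) for the Feller and strong Feller properties — and your observations about the continuity of $\pi_M$ (glued at vertices by the GLP) and the $\mu$-negligibility of the rank factor in the integrated law are correct and consistent with the paper's discussion following \eqref{eq:refldens}.

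There is, however, a genuine gap at the point you yourself flag as ``only slightly delicate''. The auxiliary identity $g_M(t,y',y)=g_M(t,\pi_M(y'),y)$ for $y'\in\mathcal{K}^{\langle\infty\rangle}$ is exactly \eqref{eq:transition} of Theorem \ref{thm:transition}, and it does \emph{not} follow from the invariance of the distributions of $Z$ under local isometries ``applied fibre-by-fibre''. The identity compares $\sum_{z'\in\pi_M^{-1}(z)}g(t,y',z')$ with $\sum_{z'\in\pi_M^{-1}(z)}g(t,y'',z')$ for two points $y',y''$ in the same fibre; since the free process wanders over all of $\mathcal{K}^{\langle\infty\rangle}$, this is a statement about the full heat kernel summed over an infinite fibre, and there is in general no isometry of the unbounded fractal carrying $y'$ to $y''$ while permuting $\pi_M^{-1}(z)$ — the one-sided blow-up is not homogeneous, and in particular the rank of a vertex (the number of $M$-complexes meeting there) varies between $1$, $2$ and $3$, so even the local pictures around $y'$ and $y''$ need not match. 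This is precisely why the paper proves Theorem \ref{thm:transition} by decomposing over the consecutive hitting times of the $M$-grid and the labels attained there (Lemma \ref{lem:lawseq}), with a reduction to the decimation-invariant random walks to handle starting points of differing rank. Your proof becomes complete if you replace the local-isometry justification by a citation of Theorem \ref{thm:transition} (whose proof also contains the induction on the finite-dimensional distributions \eqref{eq:finitedist} that you sketch); as written, the justification offered for the key step would not survive scrutiny.
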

We postpone the proof of Theorem \ref{thm:main1} till the end of this section.

\begin{proof}[Proof of Theorem \ref{thm:main2}]
First note that by Theorem \ref{thm:main1} (3), we immediately derive from the general theory of Markov processes that the process in question is a continuous Markov process on $\mathcal{K}^{\left\langle M\right\rangle}$ with transition probabilities given by
$$
P_M(t,x,A) = \int_{\mathcal{K}^{\left\langle M\right\rangle}} g_M(t,x,y) \mu(dy), \quad t >0, \ \ x \in \mathcal{K}^{\left\langle M\right\rangle}, \ \ A \in \cB(\mathcal{K}^{\left\langle M\right\rangle}),
$$
where $g_M$ are given by \eqref{eq:refldens}. Theorem \ref{thm:main1} (1)-(2) also gives that $Z_M$ is Feller and strong Feller process.
\end{proof}

The proof of Theorem \ref{thm:main1} will be given at the end of this section after a sequence of auxiliary results, which we prove below.

\begin{lemma}
\label{lem:properties} We have the following.
\begin{enumerate}
\item For every $0<u<v<\infty$, the series $\sum_{y'\in \pi_{M}^{-1} (y)}{g(t,x,y')}$ is uniformly convergent in $(t,x,y)$ on $[u,v] \times \mathcal{K}^{\left\langle \infty\right\rangle} \times \mathcal{K}^{\left\langle M\right\rangle}$.
\item The function $g_{M}(t,x,y)$ defined by \eqref{eq:refldens} is continuous on {$(0,\infty) \times \mathcal{K}^{\left\langle \infty\right\rangle} \times \mathcal{K}^{\left\langle M\right\rangle}$}.
\end{enumerate}
\end{lemma}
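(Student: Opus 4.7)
The plan is to collapse both cases of \eqref{eq:refldens} into a single series indexed by $M$-complexes and then to dominate it by a convergent numerical majorant obtained from the sub-Gaussian bound \eqref{eq:kum} together with the counting and metric estimates announced for Appendix \ref{sec:app}. Since $\pi_M|_{\Delta_M}$ is a bijection onto $\mathcal{K}^{\left\langle M\right\rangle}$ with inverse $\widetilde{\pi}_{\Delta_M}$, each preimage of $y\in\mathcal{K}^{\left\langle M\right\rangle}$ is of the form $\widetilde{\pi}_{\Delta_M}(y)$ for some $\Delta_M\in\mathcal{T}_M$. For $y\notin V_M^{\left\langle M\right\rangle}$ different complexes produce different preimages, whereas for $y\in V_M^{\left\langle M\right\rangle}$ the $\textrm{rank}(y')$ complexes sharing a given vertex preimage $y'$ all return this same vertex (in each of them $y'$ is the unique point carrying the label $\ell_M(y)$). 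In both cases one obtains the uniform identity
\begin{equation*}
g_M(t,x,y)=\sum_{\Delta_M\in\mathcal{T}_M}g\bigl(t,x,\widetilde{\pi}_{\Delta_M}(y)\bigr),
\end{equation*}
the rank factor in \eqref{eq:refldens} being generated automatically by the multiplicity of the sum over complexes. In particular, this new sum is an upper bound for the unweighted series in (1).

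For part (1) I will partition $\mathcal{T}_M$ via the distance families $\mathcal{L}_{M,n,x}$. The Appendix results I plan to invoke give planar constants $C,c>0$, independent of $M,x,y$, such that $\#\mathcal{L}_{M,n,x}\le Cn$ and $|x-z|\ge cL^M(n-1)$ for every $z\in\Delta_M\in\mathcal{L}_{M,n,x}$ with $n\ge 2$ (the first from the counting estimate for planar nested fractals, the second from the comparability of $d_M$ with the Euclidean distance in Lemma \ref{lem:metrics}). Plugging these into the reformulated sum and applying the upper bound in \eqref{eq:kum} yields
\begin{equation*}
\sum_{\Delta_M\in\mathcal{T}_M}g\bigl(t,x,\widetilde{\pi}_{\Delta_M}(y)\bigr)\le C_3\,t^{-d_s/2}\sum_{n\ge 1}Cn\,\exp\!\left(-C_4\bigl(cL^M(n-1)\bigr)^{d_w/(d_J-1)}t^{-1/(d_J-1)}\right),
\end{equation*}
whose right-hand side, for $t\in[u,v]$, is a convergent numerical series independent of $(x,y)$. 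This gives both the uniform convergence claimed in (1) and a uniform bound $\|g_M\|_\infty\le K(u,v,M)<\infty$ on $[u,v]\times\mathcal{K}^{\left\langle\infty\right\rangle}\times\mathcal{K}^{\left\langle M\right\rangle}$.

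For part (2) I will combine this uniform convergence with the joint continuity of $g$ on $(0,\infty)\times\mathcal{K}^{\left\langle\infty\right\rangle}\times\mathcal{K}^{\left\langle\infty\right\rangle}$ and the fact that every $\widetilde{\pi}_{\Delta_M}$ is an affine isometry, hence continuous. Because $\mathcal{L}_{M,n,x}$ depends on $x$, I will argue locally rather than invoking the Weierstrass test globally: for any fixed $x_0\in\mathcal{K}^{\left\langle\infty\right\rangle}$ and a compact neighbourhood $K$ of $x_0$ only finitely many complexes meet $\bigcup_{x\in K}\bigcup_{m\le n_0}\mathcal{L}_{M,m,x}$, so the truncated partial sums are finite sums of continuous functions on $[u,v]\times K\times\mathcal{K}^{\left\langle M\right\rangle}$, while the tail $\sum_{n>n_0}$ is controlled uniformly in $x\in K$ by the same majorant as in (1). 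Sending $n_0\to\infty$ presents $g_M$ as a uniform limit of continuous functions on each such compact set, whence continuity.

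The analytic heart of the argument, and its main obstacle, lies in establishing that the polynomial growth of $\#\mathcal{L}_{M,n,x}$ really does tame the stretched-exponential decay in \eqref{eq:kum} when $d_J>1$. Since a geodesic metric is unavailable on general SNFs (as stressed in Section 1), the only substitute that makes the above estimates uniform in $x$ is the graph distance $d_M$; the required counting and comparability results between $d_M$ and $|\cdot|$ are precisely what Lemma \ref{lem:metrics} and its companion statements in Appendix \ref{sec:app} are designed to supply.
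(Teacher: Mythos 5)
Your overall strategy for part (1) is the same as the paper's: group the preimages of $y$ into the graph-distance shells $\mathcal{L}_{M,n,x}$, bound the cardinality of each shell polynomially, and beat that with the stretched-exponential decay from \eqref{eq:kum}. However, you have misquoted both auxiliary estimates from the Appendix, and one of the two misquotes is actually \emph{false} for general nested fractals. Lemma \ref{lem:complab} gives $\#\mathcal{L}_{M,n,x}\le C n^{d_f}$, not $Cn$ (harmless, since $d_f<2$). More seriously, Lemma \ref{lem:metrics} gives $d_M(x,z)\le\max\{2,C_7(M)|x-z|^{d_f}\}$, hence only the \emph{sublinear} lower bound $|x-z|\ge c(M)\,n^{1/d_f}$ for $z$ in a complex of $\mathcal{L}_{M,n,x}$ with $n>2$; your claimed linear bound $|x-z|\ge cL^M(n-1)$ would amount to Lipschitz equivalence of $d_M$ with the Euclidean metric, which is exactly what the paper stresses fails on general SNFs (it is the reason the graph metric is introduced at all). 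Fortunately the conclusion survives: with the correct bound the exponent becomes $\exp\bigl(-c\,(n^{d_w/d_f}/t)^{1/(d_J-1)}\bigr)$, which is precisely the majorant the paper uses, and since $d_w/d_f>0$ this still dominates $n^{d_f}$ and yields a convergent series uniformly for $t\in[u,v]$. So the step must be repaired, but the repair is mechanical.

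For part (2) your route is genuinely different from, and cleaner than, the paper's. You rewrite $g_M(t,x,y)=\sum_{\Delta_M\in\mathcal{T}_M}g\bigl(t,x,\widetilde{\pi}_{\Delta_M}(y)\bigr)$, observing that for a vertex $y$ each preimage $y'$ is hit by exactly $\textrm{rank}(y')$ complexes, so the rank weight in \eqref{eq:refldens} is produced automatically; each summand is then jointly continuous in $(t,x,y)$ (including across $V_M^{\langle M\rangle}$, since $\widetilde{\pi}_{\Delta_M}$ is an isometry of all of $\mathcal{K}^{\langle M\rangle}$), and locally uniform convergence does the rest. The paper instead keeps the two-case definition and proves continuity at vertices by a separate limiting argument, tracking, for $y_n\to y$ with $y_n\notin V_M^{\langle M\rangle}$, the $\textrm{rank}(y')$ distinct preimages $y'_{i,n}$ converging to each $y'$ and recovering the rank factor in the limit. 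Your unified indexing over $\mathcal{T}_M$ absorbs that bookkeeping into the definition of the series and is a legitimate simplification; your localization in $x$ (compact $K$, finitely many complexes within bounded graph distance of $K$, uniform tail from part (1)) correctly handles the $x$-dependence of the shells.
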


\begin{proof} To prove the assertion (1), we use the $M$-graph distance introduced at the end Section \ref{sec:usnf}. We may write
\begin{equation} \label{eq:fin_us}
\sum_{y'\in \pi_{M}^{-1} (y)}{g(t,x,y')} = \sum_{n=1}^{\infty} \sum_{\Delta_M\left(y'\right) \in \mathcal{L}_{M,n,x}} {g(t,x,y')} =: \sum_{n=1}^{\infty} a_{n,t,x}.
\end{equation}
Moreover, by using the upper bound in \eqref{eq:kum} (together with the identity $d_w / d_f = 2/d_s$), the distance comparison principle in Lemma \ref{lem:metrics} and the estimate in Lemma \ref{lem:complab}, we have {for $n \geq 3$}
\begin{align*}
a_{n,t,x} & \leq \# \mathcal{L}_{M,n,x} \cdot \sup_{\Delta_M\left(y'\right) \in \mathcal{L}_{M,n,x}} g\left(t,x,y'\right) \\
& \leq c_1 \ \# \mathcal{L}_{M,n,x} \ \sup_{\Delta_M\left(y'\right) \in \mathcal{L}_{M,n,x}} \left(t^{-\frac{d_f}{d_w}} \ \exp \left( -c_2 \left(\frac{d_M(x,y^{\prime})^{d_w / d_f}}{t} \right)^{\frac{1}{d_J-1}}\right)\right) \\
& \leq c_3 \ n^{d_f} \ t^{-\frac{d_f}{d_w}} \ \exp \left( -c_4 \left(\frac{n^{d_w / d_f}}{t} \right)^{\frac{1}{d_J-1}}\right).
\end{align*}
%As $d_w / d_f = 2/d_s > 0$, $d_J > 1$, this bound gives a term of a convergent series,
Clearly, for every $\beta, \gamma>0$ and $r_0 >0$ there exists a constant $c_5 >0$ such that $e^{-r^{\beta}} \leq c_5 r^{-\gamma}$ for $r \geq r_0$.
Since for $n \in \Z$ and {$t \in (0,v]$} the ratio $n^{d_w / d_f}/t$ is bounded away from zero, we get the estimate
\begin{align} \label{eq:for_later_use}
a_{n,t,x} \leq c_6 \ n^{d_f}t^{-\frac{d_f}{d_w}} \left( \frac{t}{n^{d_w / d_f}}\right)^{\gamma} = c_6 \, t^{\gamma - \frac{d_f}{d_w}} \, n^{-\gamma(d_w/d_f)+d_f} , \quad x\in \mathcal K^{\langle\infty\rangle},\quad  t \in (0,v], \ \ n  \geq 3.
\end{align}
Choose $\gamma$ large enough to have $\gamma(d_w/d_f) - d_f >2$ (in particular, $\gamma > d_f/d_w$). {Then
 $a_{n,t,x}\leq c_7 n^{-2}$ for every $n \geq 3$. On the other hand, we easily get from \eqref{eq:kum} that for $n=1, 2$
$$
a_{n,t,x}\leq c_8 t^{-\frac{d_f}{d_w}} \leq c_9, \quad t \in [u,v].
$$
The assertion (1) follows.}

We now prove (2). First note that if $y\notin V_{M}^{\left\langle M \right\rangle}$, then the kernel $g_M(t,x,y)$ inherits the continuity in $(0,\infty) \times \mathcal{K}^{\left\langle \infty\right\rangle} \times \mathcal{K}^{\left\langle M\right\rangle}$ from the continuity properties of the density $g$. This is a direct consequence of the uniform convergence of the series in (1).

Suppose now that $x \in \mathcal{K}^{\left\langle \infty\right\rangle}$, $y \in V_{M}^{\left\langle M \right\rangle}$, $t>0$ and that $x_n \in \mathcal{K}^{\left\langle \infty\right\rangle}$, $y_n \in \mathcal{K}^{\left\langle M \right\rangle} \backslash  V_{M}^{\left\langle M \right\rangle}$ and $t_n >0$ are such that $(x_n,y_n,t_n) \to (x,y,t)$ as $n \to \infty$. Observe that for sufficiently large $n$ and every $y_n^{\prime} \in \pi_{M}^{-1}(y_n)$ there are exactly $\text{rank}(y^{\prime})$ different points $y_{i,n}^{\prime} \in \mathcal{K}^{\left\langle \infty\right\rangle}$ (different for different $y_n^{\prime}$'s) such that $y_{i,n}^{\prime}  \to y^{\prime}$ as $n \to \infty$, for every $i=1,...,\text{rank}(y^{\prime})$. Moreover, it holds that
$$
g_{M} \left(t_n, x_n, y_n\right) = \sum_{y'\in \pi_{M}^{-1} (y)} \sum_{i=1}^{\text{rank}(y^{\prime})} g(t_n, x_n, y_{i,n}^{\prime}).
$$
Then, thanks to the uniform convergence we can pass to the limit under the sums as follows:
\begin{align*}
\lim_{\left(t_n, x_n, y_n\right) \to \left(t,x,y\right)} g_{M} \left(t_n, x_n, y_n\right) & = \sum_{y'\in \pi_{M}^{-1} (y)}  \sum_{i=1}^{\text{rank}(y^{\prime})} \lim_{\left(t_n, x_n, y_{i,n}^{\prime}\right) \to \left(t,x,y'\right)} g\left(t_n, x_n, y_{i,n}^{\prime}\right) \\
& = \sum_{y'\in \pi_{M}^{-1} (y)} \text{rank}(y^{\prime}) g\left(t, x, y'\right) = g_{M} \left(t,x,y\right).
\end{align*}
This completes the proof of the lemma.
\end{proof}

We introduce the consecutive hitting times of the $m-$th grid:
\begin{eqnarray*}
T_{m}^{(1)} &=& \inf \left\{ t>0: Z_t \in  V_{m}^{\left\langle \infty \right\rangle} \backslash \left\{ Z_0 \right\}\right\} \\
T_{m}^{(n+1)} &=& \inf \left\{ t>T^{(n)}: Z_t \in  V_{m}^{\left\langle \infty \right\rangle} \backslash \left\{Z_{T_m^{(n)}} \right\}\right\} \textrm{, for } n>1.
\end{eqnarray*}
$\left\{T_{m}^{(n)}\right\}_{n \in \mathbb{N}}$ is an increasing sequence of stopping times and $\lim_{n \to \infty} T_{m}^{(n)} = \infty$ almost surely. This is so because the number
\begin{equation}\label{eq:pointdistance}
\alpha :=\inf\{|x-y|: x,y\in V_m^{\langle\infty\rangle}\}
\end{equation}
is strictly positive.
It is also convenient to define $$T_{m}^{(0)} = \inf \left\{ t\geq0: Z_t \in  V_{m}^{\left\langle \infty \right\rangle}\right\}.$$ Clearly, for all paths starting from $x \in V_{m}^{\left\langle \infty \right\rangle}$ one has $0 = T_{m}^{(0)} < T_{m}^{(1)}$.

The following lemma is essential in our further considerations.
\begin{lemma}
\label{lem:lawseq}
Let $x,y \in \mathcal{K}^{\left\langle \infty \right\rangle}$ be such that $\pi_{M}(x)=\pi_{M}(y)$. Then the following hold.
\begin{itemize}
\item[(1)] For every $n \in \N$, $a \in \cA$ and $t>0$
\begin{equation*}
\mathbf{P}^{x}\left(T_{M}^{(n)} < t, \ell_M\left( Z_{T_{M}^{(n)}}\right)=a \right) =\mathbf{P}^{y}\left(T_{M}^{(n)} < t, \ell_M\left( Z_{T_{M}^{(n)}}\right)=a \right).
\end{equation*}
\item[(2)] For every Borel $\Gamma \in \mathcal{K}^{\left\langle M \right\rangle}$ and $t>0$
\begin{equation*}%\label{eq:th3.1-1}
\mathbf{P}^{x} \left(Z_t \in \pi_{M}^{-1}\left(\Gamma\right), t < T^{(1)}_M \right)=\mathbf{P}^{y} \left(Z_t \in \pi_{M}^{-1}\left(\Gamma\right), t < T^{(1)}_M \right).
\end{equation*}
\end{itemize}
\end{lemma}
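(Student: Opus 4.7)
The plan is to establish the $n=1$ case of (1) together with (2) as a combined base step, then derive (1) for general $n$ by induction using the strong Markov property. Since $\pi_M$ preserves the vertex/non-vertex dichotomy, the hypothesis $\pi_M(x)=\pi_M(y)$ forces $x, y$ to be simultaneously vertices or simultaneously non-vertices, and the base step splits along this division.

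In the non-vertex case $x, y \in \mathcal{K}^{\langle\infty\rangle}\setminus V_M^{\langle\infty\rangle}$, I would introduce $\varphi := \widetilde{\pi}_{\Delta_M(y)}\circ\pi_M$. By Proposition \ref{thm:composition}, $\pi_M\circ\varphi=\pi_M$, and by construction $\varphi(x)=y$ and $\varphi$ is an isometry of $\Delta_M(x)$ onto $\Delta_M(y)$ which, by the GLP, preserves vertex labels. Since distinct $M$-complexes intersect only at vertices, the trajectory $(Z_t)_{0\le t\le T_M^{(1)}}$ stays inside $\Delta_M(x)$. The invariance of $Z$ under local isometries of $\mathcal{K}^{\langle\infty\rangle}$ then yields that $(\varphi(Z_t))_{0\le t\le T_M^{(1)}}$ under $\mathbf{P}^x$ has the same law as $(Z_t)_{0\le t\le T_M^{(1)}}$ under $\mathbf{P}^y$. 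Applying $\pi_M$ at any time $t<T_M^{(1)}$ and using $\pi_M\circ\varphi=\pi_M$ yields (2), while reading labels at time $T_M^{(1)}$ yields the $n=1$ case of (1).

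The vertex case $x, y\in V_M^{\langle\infty\rangle}$ is the main obstacle: the ranks $r_x, r_y$ can differ, and no single global isometry transports the law of $Z$ from $x$ to $y$. To handle it, I would decompose the trajectory into excursions. Set $\sigma_0=0$ and $\sigma_k=\inf\{t>\sigma_{k-1}: Z_t\text{ is a vertex of some }M\text{-complex meeting at }x\}$, and let $K=\inf\{k\ge 1: Z_{\sigma_k}\ne x\}$, so $T_M^{(1)}=\sigma_K$. Each segment $(Z_t)_{\sigma_{k-1}<t<\sigma_k}$ stays in the interior of a single $M$-complex adjacent to $x$; by the rotational symmetry of $\mathcal{K}^{\langle\infty\rangle}$ around $x$ and invariance of $Z$, the choice of this complex is uniform among the $r_x$ options. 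By the GLP, each such complex is related to $\mathcal{K}^{\langle M\rangle}$ via a label-preserving isometry sending $x$ to $v:=\pi_M(x)$, so the push-forward through $\pi_M$ of any single excursion has a common law determined by $v$: that of a Brownian excursion in $\mathcal{K}^{\langle M\rangle}$ started at $v$ and stopped at the first visit to $V_M^{\langle M\rangle}$. The excursions are i.i.d.\ by strong Markov at each $\sigma_k$, $K$ is geometric with parameter equal to one minus the $v$-return probability of that common law, and consequently both the joint law of $(T_M^{(1)},\ell_M(Z_{T_M^{(1)}}))$ and the distribution of $\pi_M(Z_t)$ on $\{t<T_M^{(1)}\}$ are completely determined by $v$, hence by $\ell_M(x)=\ell_M(y)$, and the same description applies verbatim to $y$.

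For the inductive step $n\to n+1$, by the strong Markov property at $T_M^{(n)}$ and the identity $T_M^{(n+1)}=T_M^{(n)}+T_M^{(1)}\circ\theta_{T_M^{(n)}}$,
\begin{equation*}
\mathbf{P}^x\!\left(T_M^{(n+1)}<t,\,\ell_M(Z_{T_M^{(n+1)}})=a\right)=\mathbf{E}^x\!\left[F\!\left(\ell_M(Z_{T_M^{(n)}}),\,t-T_M^{(n)}\right);\,T_M^{(n)}<t\right],
\end{equation*}
where $F(b,s):=\mathbf{P}^w(T_M^{(1)}<s,\ell_M(Z_{T_M^{(1)}})=a)$ for any $w\in V_M^{\langle\infty\rangle}$ with $\ell_M(w)=b$. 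Two such vertices $w, w'$ satisfy $\pi_M(w)=\pi_M(w')$ because $\ell_M$ restricts to a bijection on $V_M^{\langle M\rangle}$, so the $n=1$ case guarantees $F$ is well-defined. The induction hypothesis then equates the laws of $(T_M^{(n)},\ell_M(Z_{T_M^{(n)}}))$ under $\mathbf{P}^x$ and $\mathbf{P}^y$, and substituting $y$ for $x$ on the right-hand side produces the same value, completing the induction.
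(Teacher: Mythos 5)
Your non-vertex case and your inductive step are sound and essentially coincide with the paper's argument (the paper disposes of the non-vertex case in one line by noting that the laws up to the exit time from $\Delta_M(x)$, $\Delta_M(y)$ are identical; your map $\varphi=\widetilde{\pi}_{\Delta_M(y)}\circ\pi_M$ just makes the local isometry explicit). The gap is in the vertex case, which is precisely the step the paper singles out as the main difficulty. Your stopping times $\sigma_k=\inf\{t>\sigma_{k-1}: Z_t \text{ is a vertex of an } M\text{-complex meeting at } x\}$ are degenerate: the target set contains $x$ itself, and for the Brownian motion on a nested fractal single points are non-polar (the spectral dimension satisfies $d_s<2$, so $\int_0^1 g(t,x,x)\,dt<\infty$ and points carry positive capacity), hence $x$ is regular for $\{x\}$ and $\mathbf{P}^x(\sigma_1=0)=1$. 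All the $\sigma_k$ collapse to $0$, the set of return times to $x$ before $T_M^{(1)}$ is an uncountable perfect set, there is no chronologically ordered sequence of excursions, and $K$ is not defined --- so the claims that the excursions are i.i.d.\ and that $K$ is geometric do not make sense as stated. The heuristic (each excursion chooses one of the $r_x$ adjacent complexes uniformly, and after projection by $\pi_M$ the law no longer sees $r_x$) is exactly the right one, but it cannot be implemented by a naive first-return decomposition of the continuous process.

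The paper's fix is to push the whole argument down to the decimation-invariant random walks $Y^{m}_k=Z_{T_m^{(k)}}$ on $V_m^{\langle\infty\rangle}$: in discrete time the successive returns $\tau_i$ to $x$ are honest stopping times, the conditioning on the sequence of adjacent complexes entered after each return produces the factor $r_1^{i+1}$ which cancels against $\mathbf{P}^x[Y^m_{\tau_0+1}\in\Delta_M^{(x,1)},\dots,Y^m_{\tau_i+1}\in\Delta_M^{(x,1)}]=r_1^{-(i+1)}$, and the resulting conditional probabilities depend only on $\pi_M(x)$; one then lets $m\to-\infty$ and uses $T_{M,m}\to T_M^{(1)}$ a.s.\ to transfer the identity to $Z$. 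To repair your proof you would need either this discrete approximation or a genuine application of It\^{o} excursion theory at the regular point $x$ (comparing the excursion measures at $x$ and $y$ after projection); as written, the vertex case is not established. The same remark applies to your treatment of part (2) in the vertex case, since it relies on the same excursion decomposition.
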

\begin{proof}
We first establish (1) by using induction in $n$.

\smallskip

\noindent For $n=1$ we consider two cases.

\smallskip

\noindent \textsc{Case 1. $x,y\notin V_M^{\langle\infty\rangle}.$}
In this case the laws of $\left(T_{M}^{(1)}, \ell_M\left( Z_{T_{M}^{(1)}}\right)\right)$ depend entirely on the laws of $\left(Z_t\right)$ up to exit times from $\Delta_{M}(x), \Delta_{M}(y)$ respectively, which are identical.

\smallskip

\noindent \textsc{Case 2. $x,y \in V_{M}^{\left\langle \infty \right\rangle}.$} Let ${r_1} = \textrm{rank}(x) \in \left\{1,2,3\right\}$, ${r_2} = \textrm{rank}(y) \in \left\{1,2,3\right\}$. Let us notice that even though $\pi(x)=\pi(y),$ it is possible to have $r_1 \neq r_2$.
This feature was not present in the setting of Sierpi\'{n}ski gasket, where the rank of all the vertices was equal to 2.
To overcome this difficulty we will  reduce the problem to the analysis of the  random walk induced by the Brownian motion on  $\mathcal{K}^{\left\langle \infty \right\rangle}$.

To this end, denote by $Z^x$ the process $Z$ on $\mathcal{K}^{\left\langle \infty \right\rangle}$ starting from $x$ and consider the sequence of random walks $(Y^{m,x})_{m \in \Z}$ on $V_{m}^{\left\langle \infty \right\rangle}$, starting from $x$, given by $Y^{m,x}_k:= Z^x_{T_{m}^{k}}$with $k=0,1,2,...$ (as $x$ is already fixed, below we drop it from the notation). Such a family of random walks has a specific consistency property which is called the \emph{decimation invariance} (for more details we refer to \cite{bib:Lin,bib:Bar,bib:Kum,bib:Kr1,bib:Kr2}). Following \cite{bib:BP} and \cite[p. 208]{bib:Kum}, we infer that if we take
\begin{align} \label{eq:def_approx}
Z^m_t:= Y^m_{\left[\gamma^{-m} t\right]}, \quad t>0,
\end{align}
with an appropriate time scale parameter $\gamma$ (resulting from the construction of the process $Z$ in \cite{bib:Lin}), then $\mathbf{P}^x$-a.s.
$Z^m_t \to Z_t$ as $m \to -\infty$, uniformly on compact subsets of $[0,\infty)$ (recall that in our settings the sign of $m$ is opposite to that in the quoted papers). In particular, if $T_{M,m} = \inf \left\{ t>0: Z^m_t \in V_{M}^{\left\langle \infty \right\rangle} \backslash \{x\}\right\}$, then $\mathbf{P}^x$-a.s. $T_{M,m} \to T^{(1)}_M$ as $ m \to -\infty$ (cf. \cite[p. 208]{bib:Kum}) and, in consequence,
$$
 \mathbf{P}^{x} \left [T_{M,m} \leq t, \ell_M \left( Z^{m}_{T_{M,m}} \right) = a \right] \to \mathbf{P}^{x} \left [T^{(1)}_M \leq t, \ell_M \left( Z_{T^{(1)}_M} \right) = a \right],
$$
for any given $a \in \cA$ and $t>0$. Exactly the same argument leads to the analogical convergence under the measure $\mathbf{P}^{y}$. Denote by $\tau_M^m$ the consecutive hitting times of the $M-$th grid by the random walk $Y^m$. By the definition \eqref{eq:def_approx}, one has $T_{M,m} = \tau^m_M \gamma^{m}$ and $Z^{m}_{T_{M,m}} = Y^m_{\tau^m_M}$. Therefore it is enough to prove that
\begin{align} \label{eq:claim}
 \mathbf{P}^{x} \left [\tau^m_M \leq t, \ell_M \left( Y^{m}_{\tau^m_M} \right) = a \right] =  \mathbf{P}^{y} \left [\tau^m_M \leq t, \ell_M \left( Y^{m}_{\tau^m_M} \right) = a \right], \quad m \in \Zwithneg.
\end{align}
To get this, we consider the paths of $Y^m_k$ starting from $x$ and use the decomposition based on the following collection of stopping times:
$$
\tau_{0} = 0 \qquad \text{and} \qquad
\tau_{i} = \inf \left\{k > \tau_{i-1}: Y^m_k = x \right\} \quad \text{ for }  \quad i>0.
$$
Let $\Delta_M^{(x,i)}, i \in \{1,...,r_1\}$ denote the $M$-complexes with their common vertex $x$ (there are $r_1$ of them as $r_1 = \textrm{rank}(x)$). Then, using the Markov property and symmetry of the process,
\begin{align*}
& \mathbf{P}^{x} \left[ \tau^m_M = k, \ell_M\left( Y^m_{\tau^m_M} \right) = a \right] \\
& =\sum_{i=0}^{\infty} \mathbf{P}^{x} \left[ \tau^m_M = k, \ell_M\left( Y^m_{\tau^m_M} \right)  = a, \tau_i \leq k < \tau_{i+1} \right] \\
& =\sum_{i=0}^{\infty} \sum_{b_0, ..., b_i \in \left\{1, ..., r_1\right\}}
\mathbf{P}^{x} \left[\tau^m_M = k, \ell_M\left( Y^m_{\tau^m_M} \right)  = a, \tau_i \leq k < \tau_{i+1}, Y^m_{\tau_0+1} \in \Delta_M^{(x,b_0)}, ..., Y^m_{\tau_i+1} \in \Delta_M^{(x,b_i)} \right] \\
& =\sum_{i=0}^{\infty} \sum_{b_0, ..., b_i \in \left\{1, ..., r_1\right\}}
\mathbf{P}^{x} \left[\tau^m_M = k, \ell_M\left( Y^m_{\tau^m_M} \right)  = a, \tau_i \leq k < \tau_{i+1}, Y^m_{\tau_0+1} \in \Delta_M^{(x,1)}, ..., Y^m_{\tau_i+1} \in \Delta_M^{(x,1)} \right] \\
& = \sum_{i=0}^{\infty} r_1^{i+1}
\mathbf{P}^{x} \left[\tau^m_M = k, \ell_M\left( Y^m_{\tau^m_M} \right)  = a, \tau_i \leq k < \tau_{i+1}, Y^m_{\tau_0+1} \in \Delta_M^{(x,1)}, ..., Y^m_{\tau_i+1} \in \Delta_M^{(x,1)} \right]  \\
& =\sum_{i=0}^{\infty} r_1^{i+1}
\mathbf{P}^{x} \left[\tau^m_M = k, \ell_M\left( Y^m_{\tau^m_M} \right)  = a, \tau_i \leq k < \tau_{i+1} | Y^m_{\tau_0+1} \in \Delta_M^{(x,1)}, ..., Y^m_{\tau_i+1} \in \Delta_M^{(x,1)} \right] \\
& \ \ \ \ \ \ \ \ \ \ \ \ \ \ \ \ \ \ \ \ \ \ \ \ \ \ \ \
\times \mathbf{P}^{x} \left[Y^m_{\tau_0+1} \in \Delta_M^{(x,1)}, ..., Y^m_{\tau_i+1} \in \Delta_M^{(x,1)} \right]
\end{align*}
Since,
$$
\mathbf{P}^{x} \left[Y^m_{\tau_0+1} \in \Delta_M^{(x,1)}, ..., Y^m_{\tau_i+1} \in \Delta_M^{(x,1)} \right] = \left(\mathbf{P}^{x} \left[Y^m_{1} \in \Delta_M^{(x,1)}\right]\right)^{i+1} = \frac{1}{r_1^{i+1}},
$$
all members under the above sum simplify to
$$
\mathbf{P}^{x} \left[\tau^m_M = k, \ell_M\left( Y^m_{\tau^m_M} \right)  = a, \tau_i \leq k < \tau_{i+1} | Y^m_{\tau_0+1} \in \Delta_M^{(x,1)}, ..., Y^m_{\tau_i+1} \in \Delta_M^{(x,1)} \right], \quad i=0,1,... .
$$
Analogously,
\begin{align*}
\mathbf{P}^{y} & \left[ \tau^m_M = k, \ell_M\left( Y^m_{\tau^m_M} \right)  = a \right] \\
& =\sum_{i=0}^{\infty} \mathbf{P}^{y} \left[\tau^m_M = k, \ell_M\left( Y^m_{\tau^m_M} \right)  = a, \tau_i \leq k < \tau_{i+1} | Y^m_{\tau_0+1} \in \Delta_M^{(y,1)}, ..., Y^m_{\tau_i+1} \in \Delta_M^{(y,1)} \right].
\end{align*}
For every $i=0,1,...$ we have
\begin{gather*}
\mathbf{P}^{x} \left[\tau^m_M = k, \ell_M\left( Y^m_{\tau^m_M} \right)  = a, \tau_i \leq k < \tau_{i+1} | Y^m_{\tau_0+1} \in \Delta_M^{(x,1)}, ..., Y^m_{\tau_i+1} \in \Delta_M^{(x,1)} \right]\\
= \mathbf{P}^{y} \left[\tau^m_M = k, \ell_M\left( Y^m_{\tau^m_M} \right)  = a, \tau_i \leq k < \tau_{i+1} | Y^m_{\tau_0+1} \in \Delta_M^{(y,1)}, ..., Y^m_{\tau_i+1} \in \Delta_M^{(y,1)} \right],
\end{gather*}
so we finally get \eqref{eq:claim} which completes the proof of (1) for $n=1$.

Assume now that for some $n \geq 1$ the assertion holds.  Since no two vertices of the same $M-$complex can  share their labels,
\begin{eqnarray*}
&&\mathbf{P}^{x} \left[T_{M}^{(n+1)}\leq t, \ell_M\left( Z_{T_{M}^{(n+1)}} \right) = a\right]\\
&=& \mathbf{P}^{x} \left[T_{M}^{(n+1)}\leq t,\ell_M\left( Z_{T_{M}^{(n+1)}} \right) = a, T_{M}^{(n)} < t, \ell_M\left( Z_{T_{M}^{(n)}} \right) \neq a \right]\\
 &=& \mathbf{E}^{x}\left[\left.\mathbf{P}^{Z_{T_{M}^{(n)}}} \left[ T_{M}^{(1)}\leq t-u, \ell_M\left(Z_{T_{M}^{(1)}} \right) = a\right]\right|_{u=T_{M}^{(n)}};T_M^{(n)} < t,\ell_M\left( Z_{T_{M}^{(n)}} \right) \neq a\right]
\end{eqnarray*}
Laws of $\big(T_{M}^{(n)}, \ell_M(Z_{T_M^{(n)}})\big)$ are identical under $\mathbf{P}^{x}$ and $\mathbf{P}^{y}$ (inductive assumption). Also, the probability measure under the expectation depends only on  the label $\ell_M\left( Z_{T_{M}^{(n)}} \right),$ not on the actual position of $Z_{T_M^{(n)}}.$ Consequently, $\mathbf{E}^{x}$ can be replaced by $\mathbf{E}^{y}$ and the proof of (1) is concluded.

The proof of (2) is in fact similar to that of the step $n=1$ in part (1). Indeed, if $x,y\notin V_M^{\langle\infty\rangle}$, then we use exactly the same argument. If $x,y\in V_M^{\langle\infty\rangle}$, then we first prove the claimed equality for the random walk by using the same decomposition of paths and by reducing all probabilities under the sums to  proper conditional probabilities. The claimed equality for the Brownian motion $Z$ is then obtained by approximation.

%Observe that for every $m <M$, $k \in \N$ and a Borel set $\Gamma$ we can write
%\begin{align*}
%\mathbf{P}^{x} \left[Y^m_k \in \pi_{M}^{-1}\left(\Gamma\right), k < \widetilde{\tau}_{x}\right]
% =\sum_{i=0}^{\infty} \mathbf{P}^{x} \left[ Y^m_k \in \pi_{M}^{-1}\left(\Gamma\right), k < \widetilde{\tau}_{x}, \tau_{x,i} \leq k, \tau_{x,i+1}> k \right],
%\end{align*}
%\begin{align*}
%\mathbf{P}^{y} \left[Y^m_k \in \pi_{M}^{-1}\left(\Gamma\right), k < \widetilde{\tau}_{y}\right]
% =\sum_{i=0}^{\infty} \mathbf{P}^{y} \left[ Y^m_k \in \pi_{M}^{-1}\left(\Gamma\right), k < \widetilde{\tau}_{y}, \tau_{y,i} \leq k, \tau_{y,i+1}> k \right],
%\end{align*}
%and proceed exactly in the same manner as above, reducing all probabilities under the above sums to the proper conditional probabilities.
\end{proof}
\begin{theorem}
\label{thm:transition}
Let $x, y \in \mathcal{K}^{\left\langle \infty \right\rangle}$ be two points such that $\pi_{M}(x)=\pi_{M}(y)$. Then the measures {$\pi_M(\mathbf{P}^x)$ and $\pi_M(\mathbf{P}^y)$} on $\left(C\left(\mathbb{R}_{+}, \mathcal{K}^{\left\langle M\right\rangle} \right), \mathcal{B} \left(C\left(\mathbb{R}_{+}, \mathcal{K}^{\left\langle M\right\rangle} \right)\right) \right)$ coincide. Moreover, for every $z \in \mathcal{K}^{\left\langle M \right\rangle}$ we have
\begin{equation}
\label{eq:transition}
g_M(t,x,z) = g_M(t,y,z).
\end{equation}
\end{theorem}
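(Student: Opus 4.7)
The plan is to reduce the measure identity to equality of all finite-dimensional distributions of $\pi_M(Z)$ under $\mathbf{P}^x$ and $\mathbf{P}^y$, and to proceed by induction on the number $n$ of time points. The one-dimensional case is the crux: once it is established, the density identity \eqref{eq:transition} will follow --- $\mu$-almost everywhere first, then pointwise by joint continuity of $g_M$ (Lemma \ref{lem:properties}(2)) and the fact that $\mu$ has full support in $\mathcal K^{\left\langle M\right\rangle}$. The inductive step will then be a direct application of the Markov property of $Z$, combined with the continuity of $\pi_M$ which is what guarantees that the projected process lives on $C(\mathbb R_+,\mathcal K^{\left\langle M\right\rangle})$ in the first place.

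For the one-dimensional case I would fix $t>0$ and $\Gamma\in \cB(\mathcal K^{\left\langle M\right\rangle})$ and decompose according to the number of visits to the $M$-grid before time $t$:
\[
\mathbf{P}^{x}\bigl(Z_t\in\pi_M^{-1}(\Gamma)\bigr) =\mathbf{P}^{x}\bigl(Z_t\in\pi_M^{-1}(\Gamma),\,t<T_M^{(1)}\bigr)+\sum_{k=1}^{\infty}\mathbf{P}^{x}\bigl(T_M^{(k)}\le t<T_M^{(k+1)},\,Z_t\in\pi_M^{-1}(\Gamma)\bigr).
\]
The first term matches its $y$-counterpart directly by Lemma \ref{lem:lawseq}(2). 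For each $k\ge 1$, the strong Markov property at the stopping time $T_M^{(k)}$ rewrites the $k$-th term as
\[
\mathbf{E}^{x}\bigl[\mathbf{1}_{T_M^{(k)}\le t}\,\Phi\bigl(t-T_M^{(k)},\ell_M(Z_{T_M^{(k)}})\bigr)\bigr],\qquad \Phi(s,a):=\mathbf{P}^{v}\bigl(Z_{s}\in\pi_M^{-1}(\Gamma),\,s<T_M^{(1)}\bigr),
\]
where $v$ is any vertex with $\ell_M(v)=a$. The function $\Phi(s,a)$ is well defined: any two vertices sharing a label lie in a common $\pi_M$-fibre thanks to the GLP, so Lemma \ref{lem:lawseq}(2) makes the conditional probability independent of the choice of representative. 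Lemma \ref{lem:lawseq}(1) then gives equality of the joint law of $(T_M^{(k)},\ell_M(Z_{T_M^{(k)}}))$ under $\mathbf{P}^x$ and $\mathbf{P}^y$, so every summand agrees and the base case is finished.

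For the inductive step, assume equality of all $(n-1)$-dimensional projected distributions whenever the starting points share a $\pi_M$-fibre, and apply the Markov property at $t_1$:
\[
\mathbf{P}^{x}\bigl(Z_{t_j}\in\pi_M^{-1}(A_j),\,1\le j\le n\bigr) = \mathbf{E}^{x}\bigl[\mathbf{1}_{\pi_M(Z_{t_1})\in A_1}\,F(Z_{t_1})\bigr],
\]
with $F(v):=\mathbf{P}^{v}\bigl(Z_{t_j-t_1}\in\pi_M^{-1}(A_j),\,2\le j\le n\bigr)$. The inductive hypothesis makes $F$ constant on each $\pi_M$-fibre, so $F=\widetilde F\circ\pi_M$ for some Borel $\widetilde F$, and the right-hand side reduces to a functional of the one-dimensional projected distribution of $Z_{t_1}$ --- which the base case has already identified with its $y$-counterpart.

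The hard part is the base case, and inside it the situation $x\in V_M^{\left\langle\infty\right\rangle}$ where the ranks of the vertices in the fibre $\pi_M^{-1}(\pi_M(x))$ may differ; this is precisely what forces the strong Markov decomposition to depend on $Z_{T_M^{(k)}}$ only through its label $\ell_M(Z_{T_M^{(k)}})$, which in turn is exactly what the two parts of Lemma \ref{lem:lawseq} have been engineered to deliver. Once those are granted, the a.e.\ identity of $g_M(t,x,\cdot)$ and $g_M(t,y,\cdot)$ on $\mathcal K^{\left\langle M\right\rangle}$ follows from uniqueness of densities, and its upgrade to the pointwise \eqref{eq:transition} uses nothing beyond Lemma \ref{lem:properties}(2).
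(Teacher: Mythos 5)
Your proposal is correct and follows essentially the same route as the paper: the same decomposition of the one-dimensional event by the number of grid visits before time $t$, the same use of Lemma \ref{lem:lawseq}(2) for the pre-hitting term and for reducing the post-hitting conditional law to a function of the label only, the same appeal to Lemma \ref{lem:lawseq}(1) for the joint law of $(T_M^{(k)},\ell_M(Z_{T_M^{(k)}}))$, and the same induction via the Markov property at $t_1$ with fibre-constancy of the conditional expectation. The only difference is presentational (you phrase the inductive step abstractly through $F=\widetilde F\circ\pi_M$ where the paper writes out the density integrals explicitly), so nothing further is needed.
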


\begin{proof}
Let $x$ and $y$ be as in the assumptions of the theorem. It is enough to prove that the finite-dimensional distributions of underlying measures are identical, i.e. for  $j=1,2...$ and an arbitrary choice of $0 \leq t_1 \leq ... \leq t_{{j}}$ and $\Gamma_1, ..., \Gamma_{{j}} \in \mathcal{B}\left(\mathcal{K}^{\left\langle M\right\rangle}\right)$ we have:
\begin{equation}
\label{eq:finitedist}
\mathbf{P}^{x}\left[Z_{t_1} \in \pi_{M}^{-1}\left(\Gamma_1\right), ..., Z_{t_{{j}}} \in \pi_{M}^{-1}\left(\Gamma_{{j}}\right) \right] = \mathbf{P}^{y}\left[Z_{t_1} \in \pi_{M}^{-1}\left(\Gamma_1\right), ..., Z_{t_{{j}}} \in \pi_{M}^{-1}\left(\Gamma_{{j}}\right) \right].
\end{equation}
We proceed by induction in $j.$

\

\noindent First, let  ${j}=1$ (we drop the subscript '1').

For $t=0$ the equality is self-evident:
\begin{equation}
\mathbf{P}^{x}\left[Z_0 \in \pi_{M}^{-1}\left(\Gamma\right) \right] = \delta_x\left(\pi_{M}^{-1}\left(\Gamma\right) \right)= \delta_y\left(\pi_{M}^{-1}\left(\Gamma\right) \right) = \mathbf{P}^{y}\left[Z_0 \in \pi_{M}^{-1}\left(\Gamma\right) \right]
\end{equation}

\noindent
Let now $t>0$ and consider the following standard decomposition of $C\left(\left[ 0,\infty \right), \mathcal{K}^{\left\langle \infty \right\rangle}\right)$:
\begin{gather}
A_0=\left\{T_{M}^{(1)}>t \right\}, \\
A_n = \left\{T_{M}^{(n)}\leq t < T_{M}^{(n+1)} \right\}, \quad \textrm{for } n \geq 1,
\end{gather}
and for $n = 1, 2, ...$ further
\begin{equation*}
A_n = A_n^{1} \cup ... \cup A_n^{k},
\end{equation*}
where  $A_n^{i}$ indicates that $\ell_M(Z_{T_{M}^{(n)}})=a_i,$  $a_{i} \in \cA = \{a_1,...,a_k\}$, i.e. $A_n^{i} = A_n \cap \left\{\ell_M\left(Z_{T_{M}^{(n)}} \right)= a_i \right\}.$
Consequently,
\begin{eqnarray}\label{eq:th3.1-2}
\mathbf{P}^{x} \left[Z_t \in \pi_{M}^{-1}\left(\Gamma\right) \right] = \mathbf{P}^{x} \left[\left\{Z_t \in \pi_{M}^{-1}\left(\Gamma\right) \right\} \cap A_0\right] + \sum_{n=1}^{\infty} \sum_{i=1}^{k} \mathbf{P}^{x} \left[\left\{Z_t \in \pi_{M}^{-1}\left(\Gamma\right) \right\} \cap A_n^{i}\right].
\nonumber\\
\end{eqnarray}
Now our goal is to show that the terms of the series remain unchanged if we replace $x$ by $y$. By Lemma \ref{lem:lawseq} (2), we get
$$\mathbf{P}^{x} \left[\left\{Z_t \in \pi_{M}^{-1}\left(\Gamma\right) \right\} \cap A_0\right]=\mathbf{P}^{y} \left[\left\{Z_t \in \pi_{M}^{-1}\left(\Gamma\right) \right\} \cap A_0\right]. $$

To get the equality of latter terms in \eqref{eq:th3.1-2}, we use the strong Markov property of $Z_t$. We have
\begin{align*}
\mathbf{P}^{x} \left[\left\{Z_t \in \pi_{M}^{-1}\left(\Gamma\right) \right\}\right. & \left.\cap A_n^{i}\right]
= \mathbf{E}^x \left[ \mathbf{1}_{\left\{T_{M}^{\left(n\right)} \leq t, \ell_M\left(Z_{T_{M}^{\left(n\right)}}\right) =a_i\right\}} \cdot \mathbf{P}^{x}\left[T_{M}^{\left(n+1\right)}>t, Z_{t} \in \pi_{M}^{-1}\left(\Gamma\right) | \mathcal{F}_{T_{M}^{\left(n\right)}}\right] \right] \\
&= \mathbf{E}^x \left[ \mathbf{1}_{\left\{T_{M}^{\left(n\right)} \leq t, \ell_M\left(Z_{T_{M}^{\left(n\right)}}\right) =a_i\right\}} \cdot \mathbf{P}^{Z_{T_{M}^{\left(n\right)}}}\left.\left[T_{M}^{\left(1\right)}>t-s, Z_{t-s} \in \pi_{M}^{-1}\left(\Gamma\right) \right]\right|_{s=T_{M}^{(n)}} \right] \\
&=
\mathbf{E}^x \left[ \mathbf{1}_{\left\{T_{M}^{\left(n\right)} \leq t, \ell_M\left(Z_{T_{M}^{\left(n\right)}}\right) =a_i\right\}} \cdot \mathbf{P}^{v_i}\left.\left[T_{M}^{\left(1\right)}>t-s, Z_{t-s} \in \pi_{M}^{-1}\left(\Gamma\right) \right]\right|_{s=T_{M}^{(n)}}\right]
\\
&= \int_{0}^{t} \mathbf{P}^{v_i}\left[T_{M}^{\left(1\right)}>t-s, Z_{t-s} \in \pi_{M}^{-1}\left(\Gamma\right) \right] d\mu^{x}_{n,i}\left(s\right).
\end{align*}
In this formula, $v_i$ is the unique vertex of $\mathcal K^{\langle M \rangle}$ with label $a_i$ and $\mu^{x}_{n,i}$ is the distribution of $T_{M}^{\left(n\right)}\mathbf 1_{\{\ell_M(Z_{T_M^{(n)}})=a_i\}}$ under $\mathbf{P}^{x}$. The equality of the second and third line above follows from Lemma \ref{lem:lawseq} (2) (i.e. for fixed $s$ the probability under the expectation depends only on the label $\ell_M\left( Z_{T_{M}^{(n)}} \right),$ not on the actual position of $Z_{T_M^{(n)}}$). Finally, from Lemma \ref{lem:lawseq} (1) we get $\mu^{x}_{n,i} = \mu^{y}_{n,i}$ and, therefore,
\begin{equation*}
\mathbf{P}^{x} \left[\left\{Z_t \in \pi_{M}^{-1}\left(\Gamma\right) \right\} \cap A_n^{i}\right] = \mathbf{P}^{y} \left[\left\{Z_t \in \pi_{M}^{-1}\left(\Gamma\right) \right\} \cap A_n^{i}\right].
\end{equation*}
This completes the proof of \eqref{eq:finitedist} for ${j}=1$. In particular, for every $\Gamma \in \mathcal{B} \left(\mathcal{K}^{\left\langle M \right\rangle } \right)$, one has
\begin{equation*}
\int_{\Gamma} g_{M}(t,x,z) d\mu(z) = \mathbf{P}^{x} \left[ Z_t \in \pi_{M}^{-1} \left(\Gamma\right)\right] = \mathbf{P}^{y} \left[ Z_t \in \pi_{M}^{-1} \left(\Gamma\right)\right] = \int_{\Gamma} g_{M}(t,y,z) d\mu(z).
\end{equation*}
Since $g_M$ is continuous in $z$ (Lemma \ref{lem:properties}), we infer that $g_M\left(t,x,z\right) = g_M \left(t,y,z\right)$ for all $z \in \mathcal{K}^{\left\langle M \right\rangle }$. This gives \eqref{eq:transition}.

We can now complete the inductive proof of \eqref{eq:finitedist}.
Assume that the assertion (\ref{eq:finitedist}) holds for some ${j}\geq 1$, arbitrary choice of $t_1, ..., t_{{j}}\geq 0$ and $\Gamma_1, ..., \Gamma_{{j}}\in\mathcal B(\mathcal K^{(M)})$. Now, let $0 \leq t_1 \leq ... \leq t_{{j}+1}$ and $\Gamma_1, ..., \Gamma_{{j}+1} \in \mathcal{B} \left(\mathcal{K}^{\left\langle M \right\rangle } \right)$ be arbitrary. By the Markov property of $Z$, the properties of the map $\pi_M$ and Fubini-Tonelli theorem, we have
\begin{align*}
\mathbf{P}^{x} & \left[Z_{t_1} \in \pi_{M}^{-1} \left(\Gamma_1\right), ... ,Z_{t_{{j}+1}} \in \pi_{M}^{-1} \left(\Gamma_{{j}+1}\right) \right] \\
& = \int_{\pi_{M}^{-1} \left(\Gamma_1\right) } g\left(t_1, x, z\right) \mathbf{P}^{z} \left[ Z_{t_2-t_1} \in \pi_{M}^{-1} \left(\Gamma_{2}\right), ... ,Z_{t_{{j}+1}-t_1} \in \pi_{M}^{-1} \left(\Gamma_{{j}+1}\right)\right] d\mu\left(z\right) \\
& = \int_{\Gamma_1} \sum_{z^{\prime} \in \pi_{M}^{-1}(z)} g\left(t_1, x, z^{\prime}\right) \mathbf{P}^{z^{\prime}} \left[ Z_{t_2-t_1} \in \pi_{M}^{-1} \left(\Gamma_2\right), ... ,Z_{t_{{j}+1}-t_1} \in \pi_{M}^{-1} \left(\Gamma_{{j}+1}\right)\right] d\mu\left(z\right).
\end{align*}
From the inductive assumption we can now replace the measure $\mathbf{P}^{z^{\prime}}$ under the integral with $\mathbf{P}^z$ and then, from the already shown identity \eqref{eq:transition}, we get that
\begin{equation*}
\sum_{z^{\prime} \in \pi_{M}^{-1}(z)} g\left(t_1, x, z^{\prime}\right) = \sum_{z^{\prime} \in \pi_{M}^{-1}(z)} g\left(t_1, y, z^{\prime}\right), \quad \mu\text{-a.a.} \, z \in \Gamma_1,
\end{equation*}
After these rearrangements, we can now turn the formula back to the initial form, but with $x$ replaced with $y$. The theorem follows.
\end{proof}

Next we show the Chapman-Kolmogorov identity for the kernels $g_{M}(t,x,y)$.
\begin{lemma} \label{lem:ChK}
For $t,s >0$, $x,z \in \mathcal{K}^{\left\langle M \right\rangle}$
\begin{equation}
g_{M}(t+s, x,z) = \int_{\mathcal{K}^{\left\langle M \right\rangle}} g_{M}(t,x,y) g_{M} (s,y,z) d\mu(y).
\end{equation}
\end{lemma}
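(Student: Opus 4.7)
The strategy is to unfold the left-hand side via the definition \eqref{eq:refldens}, apply the Chapman--Kolmogorov equation that is already known for the free density $g$ on $\mathcal K^{\langle\infty\rangle}$, and then re-fold using a fiber decomposition of $\mu$ induced by the folding projection $\pi_M$. Theorem \ref{thm:transition} will be crucial to show that the integrand on $\mathcal K^{\langle\infty\rangle}$ descends to a well-defined integrand on $\mathcal K^{\langle M\rangle}$.

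\textbf{Step 1: unfold and use Chapman--Kolmogorov for $g$.} Writing $c(z')=\mathrm{rank}(z')$ if $z\in V_M^{\langle M\rangle}$ and $c(z')=1$ otherwise, the definition \eqref{eq:refldens} gives
\begin{equation*}
g_M(t+s,x,z)=\sum_{z'\in\pi_M^{-1}(z)} c(z')\, g(t+s,x,z').
\end{equation*}
Applying Chapman--Kolmogorov for $g$ on $\mathcal K^{\langle\infty\rangle}$ inside each summand and interchanging the (uniformly convergent, by Lemma \ref{lem:properties}(1)) sum and the integral via Fubini--Tonelli produces
\begin{equation*}
g_M(t+s,x,z)=\int_{\mathcal K^{\langle\infty\rangle}} g(t,x,w)\Bigl(\sum_{z'\in\pi_M^{-1}(z)} c(z')\, g(s,w,z')\Bigr)\,d\mu(w)=\int_{\mathcal K^{\langle\infty\rangle}} g(t,x,w)\, g_M(s,w,z)\,d\mu(w),
\end{equation*}
where the last identity uses \eqref{eq:refldens} again, now with respect to the second variable of $g$.

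\textbf{Step 2: fiber decomposition induced by $\pi_M$.} Writing $\mathcal K^{\langle\infty\rangle}=\bigcup_{\Delta_M\in\mathcal T_M}\Delta_M$ with intersections contained in the $\mu$-null set $V_M^{\langle\infty\rangle}$, and using that each $\widetilde\pi_{\Delta_M}:\mathcal K^{\langle M\rangle}\to\Delta_M$ is an isometry, I obtain for every nonnegative Borel $h$
\begin{equation*}
\int_{\mathcal K^{\langle\infty\rangle}} h(w)\,d\mu(w)=\int_{\mathcal K^{\langle M\rangle}} \Bigl(\sum_{w'\in\pi_M^{-1}(y)} h(w')\Bigr)\,d\mu(y),
\end{equation*}
the identity on the fiber being valid for every $y\in\mathcal K^{\langle M\rangle}\setminus V_M^{\langle M\rangle}$ (the null set of vertices requires no special care for the integral). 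Applying this to $h(w)=g(t,x,w)\,g_M(s,w,z)$ yields
\begin{equation*}
g_M(t+s,x,z)=\int_{\mathcal K^{\langle M\rangle}} \sum_{w'\in\pi_M^{-1}(y)} g(t,x,w')\, g_M(s,w',z)\,d\mu(y).
\end{equation*}

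\textbf{Step 3: invoke Theorem \ref{thm:transition} and conclude.} For any two points $w_1',w_2'\in\pi_M^{-1}(y)$ we have $\pi_M(w_1')=\pi_M(w_2')=y$, so by Theorem \ref{thm:transition} the function $w'\mapsto g_M(s,w',z)$ is constant on the fiber $\pi_M^{-1}(y)$; denote this common value by $g_M(s,y,z)$ (the notation is consistent with the value at the canonical representative $y\in\mathcal K^{\langle M\rangle}$). Pulling this constant out of the fiber sum gives
\begin{equation*}
g_M(t+s,x,z)=\int_{\mathcal K^{\langle M\rangle}} g_M(s,y,z) \Bigl(\sum_{w'\in\pi_M^{-1}(y)} g(t,x,w')\Bigr)\,d\mu(y),
\end{equation*}
and for $\mu$-almost every $y$ (namely all $y\notin V_M^{\langle M\rangle}$) the inner sum equals $g_M(t,x,y)$ by \eqref{eq:refldens}. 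This proves the claimed Chapman--Kolmogorov identity.

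\textbf{Where the weight is.} The only place the rank weights $\mathrm{rank}(y')$ could cause trouble is on $V_M^{\langle M\rangle}$, but this set is $\mu$-null, so it is invisible to the integral. The real conceptual content of the argument is that the folding integration identity, together with the fiber-constancy statement of Theorem \ref{thm:transition}, exactly matches the combinatorial weights built into the definition \eqref{eq:refldens}. This matching is what makes \eqref{eq:refldens} the correct formula in the first place, and it is the step that requires the most care to verify; technically the main prerequisite is Theorem \ref{thm:transition}, which has already been established.
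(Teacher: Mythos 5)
Your proposal is correct and follows essentially the same route as the paper's proof: unfold via \eqref{eq:refldens}, apply Chapman--Kolmogorov for the free density $g$, interchange sum and integral, regroup the integral over $\mathcal K^{\langle\infty\rangle}$ as a fiber sum over $\mathcal K^{\langle M\rangle}$, and use Theorem \ref{thm:transition} to replace $g_M(s,w',z)$ by $g_M(s,y,z)$ before recognizing the inner sum as $g_M(t,x,y)$ off the $\mu$-null vertex set. Your Step 2 merely makes explicit the folding integration identity that the paper invokes as ``the properties of $\pi_M$.''
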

\begin{proof}
Suppose that $z \in V_{M}^{\left\langle M \right\rangle}$. By the Chapman-Kolmogorov equation for the density $g(t,x,y)$,  Fubini-Tonelli theorem and the properties of $\pi_M$, we have
\begin{align*}
g_{M}(t+s,x,z) & =\sum_{z'\in \pi_{M}^{-1} (z)}{g(t+s,x,z')} \textrm{rank}(z') = \sum_{z'\in \pi_{M}^{-1} (z)} \int_{\mathcal{K}^{\left\langle \infty \right\rangle}} g(t,x,y) g(s,y,z') \textrm{rank}(z') d\mu(y)\\
& =\int_{\mathcal{K}^{\left\langle \infty \right\rangle}} g(t,x,y) \sum_{z'\in \pi_{M}^{-1} (z)}g(s,y,z')\textrm{rank}(z') d\mu(y)
= \int_{\mathcal{K}^{\left\langle \infty \right\rangle}} g(t,x,y) g_M(s,y,z) d\mu(y) \\
& = \int_{\mathcal{K}^{\left\langle M \right\rangle}} \sum_{y^{\prime} \in \pi_M^{-1}(y)} g(t,x,y^{\prime}) g_M(s,y^{\prime},z) d\mu(y).
\end{align*}
Now, by \eqref{eq:transition}, we can replace $g_M(s,y^{\prime},z)$ with $g_M(s,y,z)$
and observe that for nonvertex $y\in\mathcal K^{\langle M\rangle}$ $\sum_{y'}g(t,x,y')=g_M(t,x,y).$ This gives the conclusion. For $z \notin V_{M}^{\left\langle M \right\rangle}$ the proof is the same (we just drop `$\textrm{rank}(z')$').
\end{proof}

We are now in a position to show that $g_{M}$ is symmetric in $x,y \in \mathcal{K}^{\left\langle M \right\rangle}$.
\begin{lemma} \label{lem:symmetry}
For every $t>0$ and $x,y \in \mathcal{K}^{\left\langle M \right\rangle}$ one has $g_{M} (t,x,y)=g_{M}(t,y,x)$.
\end{lemma}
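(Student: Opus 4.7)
The plan is to reduce the pointwise symmetry of $g_{M}(t,\cdot,\cdot)$ on $\mathcal{K}^{\langle M\rangle}\times\mathcal{K}^{\langle M\rangle}$ to a weak $L^{2}$-symmetry of the semigroup $T_{t}^{M}$, and to derive the latter from the $L^{2}$-symmetry of the free Brownian motion semigroup $T_{t}$ applied to truncated $\pi_{M}$-periodic test functions. By the joint continuity established in Lemma~\ref{lem:properties}, it suffices to show that $\int_{\mathcal{K}^{\langle M\rangle}}f\,T_{t}^{M}g\,d\mu=\int_{\mathcal{K}^{\langle M\rangle}}g\,T_{t}^{M}f\,d\mu$ for every $f,g\in C_{b}(\mathcal{K}^{\langle M\rangle})$: $\mu\otimes\mu$-a.e.\ kernel symmetry combined with joint continuity forces the pointwise identity everywhere.

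Given such $f,g$, form the periodic lifts $\tilde f:=f\circ\pi_{M}$ and $\tilde g:=g\circ\pi_{M}$, which are bounded on $\mathcal{K}^{\langle\infty\rangle}$. Theorem~\ref{thm:transition} tells us that $g_{M}(t,x,y)$ depends on $x$ only through $\pi_{M}(x)$, and a direct computation (integrating $g(t,x,\cdot)$ against $\tilde g$ and folding) then gives $T_{t}\tilde g(x)=T_{t}^{M}g(\pi_{M}(x))$ for every $x\in\mathcal{K}^{\langle\infty\rangle}$. Since $\mathcal{K}^{\langle M+n\rangle}$ is the essentially disjoint union of $N^{n}$ isometric copies of $\mathcal{K}^{\langle M\rangle}$ and $\mu$ is isometry-invariant, this yields
\begin{equation}\label{eq:lift-Nn}
\int_{\mathcal{K}^{\langle M+n\rangle}}\tilde f\cdot T_{t}\tilde g\,d\mu=N^{n}\int_{\mathcal{K}^{\langle M\rangle}}f\,T_{t}^{M}g\,d\mu,\qquad n\in\mathbb{N}.
\end{equation}
Symmetry of $g$ and Fubini, applied to the compactly supported truncations $F_{n}:=\mathbf{1}_{\mathcal{K}^{\langle M+n\rangle}}\tilde f$ and $G_{n}:=\mathbf{1}_{\mathcal{K}^{\langle M+n\rangle}}\tilde g$, give the $L^{2}$-identity $\int F_{n}T_{t}G_{n}\,d\mu=\int G_{n}T_{t}F_{n}\,d\mu$. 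Splitting $T_{t}G_{n}=T_{t}\tilde g-T_{t}(\mathbf{1}_{\mathcal{K}^{\langle M+n\rangle}^{c}}\tilde g)$ and using \eqref{eq:lift-Nn} together with its $f\leftrightarrow g$ counterpart, this turns into
\begin{equation}\label{eq:err-Nn}
N^{n}\!\left(\int f\,T_{t}^{M}g\,d\mu-\int g\,T_{t}^{M}f\,d\mu\right)=\mathrm{err}_{n},\quad |\mathrm{err}_{n}|\le 2\|f\|_{\infty}\|g\|_{\infty}\!\int_{\mathcal{K}^{\langle M+n\rangle}}\!\mathbf{P}^{x}(Z_{t}\notin\mathcal{K}^{\langle M+n\rangle})\,d\mu(x).
\end{equation}

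The principal technical obstacle is to prove $\mathrm{err}_{n}/N^{n}\to0$. For this I split $\mathcal{K}^{\langle M+n\rangle}$ by Euclidean distance from its topological boundary in $\mathcal{K}^{\langle\infty\rangle}$, a finite set of essential vertices scaled from $V_{0}^{\langle 0\rangle}$. With threshold $r_{n}:=L^{M+n/2}$, the near-boundary piece $\{x\in\mathcal{K}^{\langle M+n\rangle}:\mathrm{dist}(x,\mathcal{K}^{\langle M+n\rangle,c})\le r_{n}\}$ has $\mu$-measure $\le Cr_{n}^{d_{f}}$ by the Ahlfors $d_{f}$-regularity of $\mu$, while for $x$ deeper than $r_{n}$ from the boundary the inclusion $\{Z_{t}\in\mathcal{K}^{\langle M+n\rangle,c}\}\subset\{|Z_{t}-x|\ge r_{n}\}$ together with \eqref{eq:kum} (integrated over dyadic annuli via Ahlfors regularity) yields $\mathbf{P}^{x}(Z_{t}\notin\mathcal{K}^{\langle M+n\rangle})\le c\exp(-c'(r_{n}^{d_{w}}/t)^{1/(d_{J}-1)})$. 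Together with $\mu(\mathcal{K}^{\langle M+n\rangle})=L^{(M+n)d_{f}}$ and $N^{n}=L^{nd_{f}}$,
\[
\frac{|\mathrm{err}_{n}|}{N^{n}}\le C\|f\|_{\infty}\|g\|_{\infty}\Bigl(L^{Md_{f}-nd_{f}/2}+L^{Md_{f}}\exp\bigl(-c'L^{(M+n/2)d_{w}/(d_{J}-1)}\,t^{-1/(d_{J}-1)}\bigr)\Bigr)\xrightarrow[n\to\infty]{}0.
\]
Passing to the limit $n\to\infty$ in \eqref{eq:err-Nn} yields the $L^{2}$-symmetry $\int f\,T_{t}^{M}g\,d\mu=\int g\,T_{t}^{M}f\,d\mu$, and the joint continuity from part~(1) of Theorem~\ref{thm:main1} promotes this to the claimed $g_{M}(t,x,y)=g_{M}(t,y,x)$ for all $x,y\in\mathcal{K}^{\langle M\rangle}$.
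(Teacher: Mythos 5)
Your argument is correct in outline but follows a genuinely different route from the paper's. The paper works directly with the kernel: for non-vertex $x,y$ it represents $g_M(t,x,y)$ as the limit of the manifestly symmetric Ces\`aro-type averages $\frac{\mu(\mathcal{K}^{\left\langle M \right\rangle})}{\mu(\mathcal{K}^{\left\langle n \right\rangle})}\sum g(t,x',y')$ over pairs of preimages $(x',y')$ both lying in $\mathcal{K}^{\left\langle n \right\rangle}$, and the whole effort goes into showing that the asymmetric correction (pairs with $y'\notin\mathcal{K}^{\left\langle n \right\rangle}$) vanishes; this is done by splitting the $x'$ into those near the $k$ vertices of $\mathcal{K}^{\left\langle n \right\rangle}$ and those far, using Lemmas \ref{lem:metrics} and \ref{lem:complab}. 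You instead dualize: you test $T_t^M$ against $C_b$ functions, lift them to $\pi_M$-periodic functions, invoke the $L^2$-symmetry of the free semigroup on truncations to $\mathcal{K}^{\left\langle M+n \right\rangle}$, and control the truncation error by exit probabilities; continuity of the kernel (Lemma \ref{lem:properties}) then upgrades weak symmetry to pointwise symmetry, including at vertices, exactly as the paper's final continuity step does. Both proofs ultimately rest on the same inputs — symmetry of $g$, Theorem \ref{thm:transition} to make the folded kernel well defined on fibers, the sub-Gaussian upper bound \eqref{eq:kum}, and a boundary-layer/interior dichotomy — but your weak formulation avoids the paper's delicate bookkeeping of the sets $A,B,C,D,E$ at the price of an extra (routine) duality step.

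One step is stated too quickly: the bound $\mu\left(\{x\in\mathcal{K}^{\left\langle M+n \right\rangle}:\mathrm{dist}(x,(\mathcal{K}^{\left\langle M+n \right\rangle})^c)\le r_n\}\right)\le C r_n^{d_f}$ does not follow from Ahlfors regularity alone; you must first show that this set is contained in $\bigcup_{v}B(v,Cr_n)$ over the $k$ vertices $v$ of $\mathcal{K}^{\left\langle M+n \right\rangle}$. That containment is true but needs the separation statement of Lemma \ref{lem:const} (scaled): if $x\in\mathcal{K}^{\left\langle M+n \right\rangle}$ and $w\notin\mathcal{K}^{\left\langle M+n \right\rangle}$ have $|x-w|< C_5 L^{M+\lfloor n/2\rfloor}$, then the $(M+\lfloor n/2\rfloor)$-complexes of $x$ and $w$ must intersect, and since they lie in $\mathcal{K}^{\left\langle M+n \right\rangle}$ and in a neighboring $(M+n)$-complex respectively, their common point can only be the shared vertex $v$, forcing $|x-v|\le L^{M+\lfloor n/2\rfloor}\diam(\mathcal{K}^{\left\langle 0 \right\rangle})$. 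With that inserted (and the threshold $r_n$ adjusted by the harmless constant $C_5$), your error estimate and hence the whole proof goes through.
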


\begin{proof}
We will prove that for $x,y \in \mathcal{K}^{\left\langle M \right\rangle} \backslash V_{M}^{\left\langle M \right\rangle}$ we have:
\begin{equation}
\label{eq:sym2}
g_{M} (t,x,y) = \lim_{n \to \infty} \frac{\mu\left(\mathcal{K}^{\left\langle M \right\rangle}\right)}{\mu\left(\mathcal{K}^{\left\langle n \right\rangle}\right)} \sum_{A \left(n,x,y\right)} g\left(t, x', y'\right),
\end{equation}
where $A\left(n,x,y\right) = \left\{\left(x',y'\right): x' \in \pi_{M}^{-1}(x), y' \in \pi_{M}^{-1}(y), x', y' \in \mathcal{K}^{\left\langle n \right\rangle} \right\}$. As \eqref{eq:sym2} is symmetric in $x$ and $y$, it proves the assertion of the lemma in the case when $x$ and $y$ are not vertices from $V_{M}^{\left\langle M \right\rangle}$.

To see \eqref{eq:sym2} we make use of \eqref{eq:transition}. The value of the sum
\begin{equation*}
\sum_{y' \in \pi_{M}^{-1}(y)} g(t,x,y')
\end{equation*}
does not depend on the particular choice of $x$ within the same fiber. For $n \geq M$ we have
\begin{eqnarray*}
g_{M}(t,x,y) &= &\frac{\mu\left(\mathcal{K}^{\left\langle M \right\rangle}\right)}{\mu\left(\mathcal{K}^{\left\langle n \right\rangle}\right)} \sum_{B \left(n,x,y\right)} g\left(t, x', y'\right)\\
&=& \frac{\mu\left(\mathcal{K}^{\left\langle M \right\rangle}\right)}{\mu\left(\mathcal{K}^{\left\langle n \right\rangle}\right)} \sum_{A \left(n,x,y\right)} g\left(t, x', y'\right) + \frac{\mu\left(\mathcal{K}^{\left\langle M \right\rangle}\right)}{\mu\left(\mathcal{K}^{\left\langle n \right\rangle}\right)} \sum_{C \left(n,x,y\right)} g\left(t, x', y'\right) =: \alpha_n + \beta_n,
\end{eqnarray*}
where
\begin{eqnarray*}
A(n,x,y) && \textrm{ is defined above,}\\
B(n,x,y) &=& \left\{\left(x',y'\right): x' \in \pi_{M}^{-1}(x), y' \in \pi_{M}^{-1}(y), x' \in \mathcal{K}^{\left\langle n \right\rangle} \right\},\\
C(n,x,y) &=& \left\{\left(x',y'\right): x' \in \pi_{M}^{-1}(x), y' \in \pi_{M}^{-1}(y), x' \in \mathcal{K}^{\left\langle n \right\rangle}, y' \notin  \mathcal{K}^{\left\langle n \right\rangle} \right\}.
\end{eqnarray*}
{To justify \eqref{eq:sym2}, it} suffices to show that $\beta_n$ goes to zero as $n \to \infty$. Let us assume that $n$ is large enough such that $\left\lfloor\log_N n \right\rfloor>M$. Then we have

\begin{eqnarray}
\label{eq:beta}
\nonumber
\beta_n &= &\frac{\mu\left(\mathcal{K}^{\left\langle M \right\rangle}\right)}{\mu\left(\mathcal{K}^{\left\langle n \right\rangle}\right)} \sum_{x'\in D(n,x) } \left(\sum_{\substack{y^{\prime} \in \pi_{M}^{-1}(y) \\ y' \notin \mathcal{K}^{\left\langle n \right\rangle}}}  g\left(t, x', y'\right)\right)+ \frac{\mu\left(\mathcal{K}^{\left\langle M \right\rangle}\right)}{\mu\left(\mathcal{K}^{\left\langle n \right\rangle}\right)} \sum_{{x^{\prime} \in E(n,x)}} \left(\sum_{\substack{y^{\prime} \in \pi_{M}^{-1}(y) \\ y' \notin \mathcal{K}^{\left\langle n \right\rangle}} }  g\left(t, x', y'\right)\right)\\
 &=&: \beta_{n,1} + \beta_{n,2}
\end{eqnarray}
where
\begin{equation*}
D(n,x) = \pi_{M}^{-1}(x) \cap \left\{x' \in \mathcal{K}^{\left\langle n \right\rangle}: V\left(  \mathcal{K}^{\left\langle n \right\rangle}\right) \cap \Delta_{\left\lfloor\log_N n \right\rfloor}(x') \neq \emptyset \right\}
\end{equation*}
is the set of those $x' \in \pi_{M}^{-1}(x)$ which are close to the vertices of $\mathcal{K}^{\left\langle n \right\rangle}$ and
$$ E(n,x) = \pi_{M}^{-1}(x) \cap \mathcal{K}^{\left\langle n \right\rangle} \cap D^c(n,x)$$
is the set of those $x'$ that are far from all the vertices.
First note that by \eqref{eq:transition} one has
$$
\beta_{n,1} \leq \frac{\mu\left(\mathcal{K}^{\left\langle M \right\rangle}\right)}{\mu\left(\mathcal{K}^{\left\langle n \right\rangle}\right)} \cdot \# D(n,x) \cdot \sup_{x, y \in \mathcal{K}^{\left\langle M \right\rangle}} q_{M}(t,x,y).
$$
Recall that, for any $n,$ the set $V^{\langle n \rangle}$ has exactly $k$ vertices.
Now, since the cardinality of $D(n,x)$ is the number of $M$-complexes within the $k$ $\left\lfloor\log_N n \right\rfloor$-complexes (each $\left\lfloor\log_N n \right\rfloor$-complex is adjacent to one of the $k$ vertices in $V\left(  \mathcal{K}^{\left\langle n \right\rangle}\right)$), $\beta_{n,1}$ can be estimated as follows
$$
\beta_{n,1}
\leq \frac{\mu\left(\mathcal{K}^{\left\langle M \right\rangle}\right)}{\mu\left(\mathcal{K}^{\left\langle n \right\rangle}\right)} \cdot \frac{k \mu\left(\mathcal{K}^{\left\langle \left\lfloor\log_N n \right\rfloor \right\rangle}\right)}{\mu\left(\mathcal{K}^{\left\langle M \right\rangle}\right)} \cdot c_1 = c_1 \frac{k N^{\left\lfloor\log_N n \right\rfloor}}{N^n} \leq c_1 \frac{k n}{N^n},
$$
where $c_1 = c_1(t,M):=\sup_{x, y \in \mathcal{K}^{\left\langle M \right\rangle}} g_{M}(t,x,y)$. This gives that $\beta_{n,1} \to 0$ as $n \to \infty$.

To estimate $\beta_{n,2}$ we notice that in this case $x'$ and $y'$ are far away. If $m > M$ and $d_m(x',y')>2$, then $d_{M}(x',y')>2^{m-M} +2$. By using this with
 $m=\lfloor \log_N{n}\rfloor$, together with the estimate $\# E(n,x)\leq
 \frac{\mu(\mathcal K^{\langle n\rangle})}{\mu(\mathcal K^{\langle N\rangle})},$  we get
\begin{align*}
%\label{eq:beta2}
\beta_{n,2} & \leq \frac{\mu\left(\mathcal{K}^{\left\langle M \right\rangle}\right)}{\mu\left(\mathcal{K}^{\left\langle n \right\rangle}\right)} \cdot \# E(n,x) \cdot \sup_{x'\in E(n,x)} \sum_{\substack{y' \in \pi_{M}^{-1}(y) \\ y' \notin \mathcal{K}^{\left\langle n \right\rangle}}}  g\left(t, x', y'\right)\\
& \leq \frac{\mu\left(\mathcal{K}^{\left\langle M \right\rangle}\right)}{\mu\left(\mathcal{K}^{\left\langle n \right\rangle}\right)} \cdot \frac{\mu\left(\mathcal{K}^{\left\langle n \right\rangle}\right)}{\mu\left(\mathcal{K}^{\left\langle M \right\rangle}\right)} \cdot \sup_{x'\in E(n,x)}  \sum_{\left\{y' \in \pi_{M}^{-1}(y) \, : d_{\left\lfloor\log_N n \right\rfloor}(x', y')>2\right\}}  g\left(t, x', y'\right)\\
& \leq \sup_{x'\in E(n,x)} \sum_{\left\{y' \in \pi_{M}^{-1}(y) \, : d_{M}(x', y')>2^{\left\lfloor\log_N n \right\rfloor-M}+2\right\}}  g\left(t, x', y'\right) \\
& =\sup_{x'\in E(n,x)}  \sum_{j=2^{\left\lfloor \log_N n\right\rfloor-M}+2}^{\infty} \sum_{\left\{y' \in \pi_{M}^{-1}(y) \, : d_{M}(x', y')=j\right\}}  g\left(t, x', y'\right)\\
& \leq \sum_{j=2^{\left\lfloor \log_N n\right\rfloor-M}+2}^{\infty}\sup_{x'\in E(n,x)}  \sum_{\left\{y' \in \pi_{M}^{-1}(y) \, : d_{M}\left(x',y'\right) =j \right\} }  g\left(t, x', y'\right).
\end{align*}
If $d_{M}(x', y')=j>2$, then $|x'-y'|\geq c_2 (M) j^{1/d_f}$ (Lemma \ref{lem:metrics}).
Then, by applying the upper bound in \eqref{eq:kum}, and finally Lemma \ref{lem:complab} (to estimate the number of  points $y'$ under the inner sum), the above estimate can be continued as follows
\begin{align*}
& \sum_{j=2^{\left\lfloor \log_N n\right\rfloor-M}+2}^{\infty}\sup_{x'\in E(n,x)}  \sum_{\left\{y' \in \pi_{M}^{-1}(y) \, : d_{M}\left(x',y'\right) =j \right\} } g\left(t, x', y'\right)\\
& \leq c_3 \sum_{j=2^{\left\lfloor \log_N n\right\rfloor-M}+2}^{\infty}\sup_{x'\in E(n,x)}  \sum_{\left\{y' \in \pi_{M}^{-1}(y) \, : d_{M}\left(x',y'\right) =j \right\} } t^{-d_s/2} \exp \left(-c_4 \left(\frac{\left|x'-y' \right|^{d_w}}{t} \right)^{\frac{1}{d_J -1}} \right)\\
& \leq c_3 \sum_{j=2^{\left\lfloor \log_N n\right\rfloor-M}+2}^{\infty} \sup_{x'\in E(n,x)}  \# \left\{y' \in \pi_{M}^{-1}(y) \, : d_{M}\left(x',y'\right) =j \right\}\cdot   t^{-d_s/2} \exp \left(-c_5 \left(\frac{j ^{d_w/d_f}}{t} \right)^{\frac{1}{d_J -1}} \right)\\
& \leq c_6 \sum_{j=2^{\left\lfloor \log_N n\right\rfloor-M}+2}^{\infty} j^{d_f} t^{-d_s/2} \exp \left(-c_5 \left(\frac{j^{d_w / d_f}}{t} \right)^{1 / \left(d_J -1 \right)} \right),
\end{align*}
where the constants $c_3,...,c_6$ do not depend on $n$. We then see that $\beta_{n,2}$ is dominated by the tail of a convergent series - hence $\beta_{n,2} \to 0$ as $n \to \infty$. This completes the proof for $x,y \in \mathcal{K}^{\left\langle M \right\rangle} \backslash V_{M}^{\left\langle M \right\rangle}$. For arbitrary $x$ and $y$ the assertion of the lemma follows by continuity.
\end{proof}

We are now ready to give a formal proof of Theorem \ref{thm:main1}.

\begin{proof}[Proof of Theorem \ref{thm:main1}]
First note that the first part of assertion (1) (continuity and boundedness of $g_M(t,x,y)$) and the assertions (3) and (4) have already been proven in Lemmas \ref{lem:properties}, \ref{lem:ChK}, \ref{lem:symmetry}, respectively. %Also, the assertion (5) is a direct consequence of the definition \eqref{eq:refldens} and the scaling property of the density $g(t,x,y)$.
Moreover, the inclusion $T_t^M \big(L^{\infty}(\mathcal{K}^{\left\langle M\right\rangle})\big) \subset C_b(\mathcal{K}^{\left\langle M\right\rangle})$, $t>0$, completing (1), follows directly from the continuity and boundedness of the kernel $g_M(t,x,y)$ by the Lebesgue dominated convergence theorem (recall that $\mu(\mathcal{K}^{\left\langle M\right\rangle})< \infty$). Therefore, it suffices to show the strong continuity in the assertion (2). By \eqref{eq:fin_us} we have for $f \in C_b(\mathcal{K}^{\left\langle M\right\rangle})$
\begin{align*}
& \int_{\mathcal{K}^{\left\langle M\right\rangle}} f(y) g_M(t,x,y) \mu(dy)- f(x)  \\ & = \int_{\mathcal{K}^{\left\langle M\right\rangle}} (f(y) - f(x)) g_M(t,x,y) \mu(dy) \\
& = \int_{\mathcal{K}^{\left\langle M\right\rangle}} (f(y) - f(x)) \left(\sum_{y'\in \pi_{M}^{-1} (y)}{g(t,x,y')}\right)\mu(dy) \\
& = \int_{\mathcal{K}^{\left\langle M\right\rangle}} (f(y) - f(x)) \left(\sum_{n=1}^2 \sum_{\Delta_M\left(y'\right) \in \mathcal{L}_{M,n,x}} {g(t,x,y')}\right)\mu(dy) + \int_{\mathcal{K}^{\left\langle M\right\rangle}} (f(y) - f(x)) \left(\sum_{n=3}^{\infty} a_{n,t,x}\right)\mu(dy) \\
& =: I_1(t,x) + I_2(t,x).
\end{align*}
By  the properties of the projection $\pi_M$ we get
\begin{align*}
I_1(t,x) & = \sum_{n=1}^2 \sum_{\Delta \in \mathcal{L}_{M,n,x}} \int_{\Delta} (f_M(y) - f_M(x)) g(t,x,y)\mu(dy) \\ & = \int_{\bigcup_{\Delta \in \mathcal{L}_{M,1,x} \cup \mathcal{L}_{M,2,x}}} (f_M(y) - f_M(x)) g(t,x,y)\mu(dy),
\end{align*}
where $f_M: \mathcal{K}^{\left\langle \infty \right\rangle} \to \R$ is defined by $f_M(y):=f(\pi_M(y))$, $y \in \mathcal{K}^{\left\langle \infty \right\rangle}$. Observe that $f_M \in C_b(\mathcal{K}^{\left\langle \infty\right\rangle})$
(uniformly continuous in fact), in particular for any given $\epsilon>0$ there is an $\eta>0$ such that $|f_M(x)-f_M(y)|\leq\epsilon$ once $d_M(x,y)\leq \eta.$
From this %and from the upper bound in \eqref{eq:kum}
we can derive that
\begin{eqnarray*}
\sup_{x \in \mathcal{K}^{\left\langle M\right\rangle}} |I_1(t,x)| &\leq& \sup_{x \in \mathcal{K}^{\left\langle M\right\rangle}} \int_{\bigcup_{\Delta \in \mathcal{L}_{M,1,x} \cup \mathcal{L}_{M,2,x}}} |f_M(y) - f_M(x)| g(t,x,y)\mu(dy)\\
 &\leq & \sup_{x\in \mathcal K^{\langle M\rangle}} \int_{B(x,\eta)}+\int_
 {\left(\bigcup_{\Delta \in \mathcal{L}_{M,1,x} \cup \mathcal{L}_{M,2,x}}\right)\cap B(x,\eta)^c}
 \\
&\leq& \epsilon + 2\|f_M\|_\infty \sup_{x\in\mathcal K^{\langle M\rangle}} \mathbf{P}(Z_t\in B(x,\delta)^c).
\end{eqnarray*}
Letting first $t\to 0$ and then $\epsilon\to 0$ we get the assertion.
Moreover, by \eqref{eq:for_later_use} there exists $\delta>0$ such that $|\sum_{n=3}^{\infty} a_{n,t,x}| \leq c_1 t^{-\delta}$, uniformly in $x$, and then
$$
\sup_{x \in \mathcal{K}^{\left\langle M\right\rangle}} |I_2(t,x)| \leq 2 c_1 \left\|f\right\|_{\infty} \mu(\mathcal{K}^{\left\langle M\right\rangle}) t^{-\delta} \to 0 \quad \text{as} \ t \to 0^{+}.
$$
The assertion (2) follows. This completes the proof of the theorem.
\end{proof}

\appendix
\section{}
\label{sec:app}

We now discuss in detail the relation of the $M$-graph distance (introduced in \ref{def:graphmetric}) to the Euclidean distance on simple nested fractal. The following facts were used in proofs in the previous section.

For $E,F\subset\mathcal K^{\langle\infty\rangle}$ closed and bounded,   $\mbox{dist}(E,F)=\inf\{|x-y|:x\in E, y\in F\},$ denotes their Euclidean distance.
\begin{lemma}\label{lem:const}
We have
\[\inf\left\{\mbox{\rm dist}(\Delta_0^{(1)},\Delta_0^{(2)}): \Delta_0^{(1)},\Delta_0^{(2)}\in\mathcal T_0,\Delta_0^{(1)}\cap \Delta_0^{(2)}=\emptyset\right\}=:C_5>0.\]
\end{lemma}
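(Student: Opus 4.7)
The plan is to exploit the self-similar structure of the USNF. For each integer $m\geq 1$, define
\[
\delta_m := \inf\{\dist(\Delta,\Delta'):\Delta,\Delta'\in\mathcal T_0,\ \Delta,\Delta'\subset\mathcal K^{\langle m\rangle},\ \Delta\cap\Delta'=\emptyset\},
\]
with $\delta_m=+\infty$ when no such pair exists. Since $\mathcal K^{\langle m\rangle}$ contains only $N^m$ $0$-subcomplexes, $\delta_m$ is, whenever finite, a minimum over a finite collection of strictly positive Euclidean distances and therefore positive. Each $0$-complex is adjacent (via a shared vertex) to only boundedly many others, because $\max_v\mathrm{rank}(v)\leq 3$ and each $0$-complex has exactly $k$ vertices; hence the count $N^m$ eventually forces disjoint pairs of $0$-subcomplexes to appear. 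Let $m_0$ be the smallest such $m$ and set $C_5:=\delta_{m_0}>0$. Because the defining isometry $U$ is the identity (by the proposition in Section \ref{sec:usnf}), every $m$-complex is a translate of $\mathcal K^{\langle m\rangle}$, so the quantity in the lemma equals $\inf_{m\geq m_0}\delta_m$, and everything reduces to showing $\delta_m\geq C_5$ for every $m\geq m_0$.

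I would prove this by induction on $m$. The base case holds by definition. For the inductive step, take a disjoint pair $\Delta_0^{(1)},\Delta_0^{(2)}\subset\mathcal K^{\langle m+1\rangle}$. If both sit in a single $m$-subcomplex of $\mathcal K^{\langle m+1\rangle}$, the inductive hypothesis applied after a homothety by $L^{-1}$ yields the bound immediately. Otherwise they lie in distinct $m$-subcomplexes $\Delta_m^{(1)},\Delta_m^{(2)}$ of $\mathcal K^{\langle m+1\rangle}$, and the nesting axiom of Definition \ref{def:snf} forces $\Delta_m^{(1)}\cap\Delta_m^{(2)}$ to be either empty or a single common vertex $p$. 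In the empty case, rescaling by $L^{-m}$ converts the pair into disjoint $0$-subcomplexes of $\mathcal K^{\langle 1\rangle}$, so the original distance is $\geq L^m\delta_1\geq L^m C_5\geq C_5$ by the monotonicity $\delta_1\geq\delta_{m_0}=C_5$ (vacuous if $\delta_1=+\infty$, as happens for the planar gasket).

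The remaining shared-vertex subcase is the main technical obstacle. The plan is to combine Proposition \ref{pro:plane} (the vertex sets of $\Delta_m^{(1)}$ and $\Delta_m^{(2)}$ are congruent regular $k$-gons) with the open set condition to conclude that $\Delta_m^{(1)}$ and $\Delta_m^{(2)}$ lie in complementary angular sectors about $p$ with some minimal opening angle $\alpha_k>0$ depending only on $k$, whence the law of cosines gives
\[
|x-y|\geq c_k\sqrt{|x-p|\cdot|y-p|}\qquad(x\in\Delta_m^{(1)},\ y\in\Delta_m^{(2)})
\]
for some $c_k>0$. Since $\Delta_0^{(1)}\cap\Delta_0^{(2)}=\emptyset$, at least one of them, say $\Delta_0^{(1)}$, does not contain $p$; the unique $0$-subcomplex $\tilde\Delta_0^{(1)}\subset\Delta_m^{(1)}$ containing $p$ (existence and uniqueness are obtained from Lemma \ref{lem:koch} applied at scale $L^m$) is then distinct from $\Delta_0^{(1)}$. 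If $\Delta_0^{(1)}\cap\tilde\Delta_0^{(1)}=\emptyset$, the inductive hypothesis inside $\Delta_m^{(1)}$ yields $\dist(\Delta_0^{(1)},p)\geq\dist(\Delta_0^{(1)},\tilde\Delta_0^{(1)})\geq C_5$; if they share a vertex $q\neq p$, I descend one level within $\Delta_m^{(1)}$ and repeat, the recursion terminating after at most $m$ steps at a base-case configuration. The same analysis applied to $\Delta_0^{(2)}$, together with the displayed inequality, closes the induction (after possibly replacing $C_5$ by the fixed positive constant $c_k C_5$). I expect the rigorous derivation of the angular-sector inequality to be the principal obstacle, since a nested fractal need not lie inside the convex hull of its vertices (Figure \ref{fig:notconvex}); the argument must therefore lean on the nesting axiom to prevent a $0$-complex from crossing $p$ without actually containing it.
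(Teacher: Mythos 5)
Your skeleton (induction on $m$, splitting according to whether the two $0$-complexes sit in the same, in disjoint, or in neighbouring higher-level complexes) matches the paper's, and your first two cases are handled correctly by the same scaling arguments. The gap is in the shared-vertex case, which you yourself flag as the principal obstacle but do not actually resolve, and the route you sketch for it would not close the induction even if completed. First, the angular-separation claim is not established: since an $M$-complex need not lie in the convex hull of its vertices, the angular extent of $\Delta_m^{(1)}$ as seen from $p$ can exceed the interior angle of the $k$-gon, and two complexes meeting only at $p$ can a priori interleave angularly, so a uniform opening $\alpha_k>0$ between the two sectors requires a genuine argument (presumably a blow-up/self-similarity analysis at $p$) that is missing. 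Second, and more fatally, the law-of-cosines bound yields $\dist(\Delta_0^{(1)},\Delta_0^{(2)})\geq c_k C$ when the inductive hypothesis is $\delta_m\geq C$, with $c_k=\sqrt{2(1-\cos\alpha_k)}$ not guaranteed to be $\geq 1$; a multiplicative loss at each level gives only $\delta_m\geq c_k^{\,m-m_0}C_5$, whose infimum over $m$ is $0$, so the induction does not produce a uniform positive constant. (Your subsidiary recursion for bounding $\dist(\Delta_0^{(1)},p)$ when $\Delta_0^{(1)}$ touches the $0$-complex at $p$ has the same defect: each descent must not lose a constant, and you do not show this.)

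The paper sidesteps all of this with a purely combinatorial-geometric trick that preserves distances exactly. In the shared-vertex case it first splits again according to whether the $(m-1)$-complexes $\widetilde{\Delta}_{m-1}^{(1)},\widetilde{\Delta}_{m-1}^{(2)}$ containing $\Delta_0^{(1)},\Delta_0^{(2)}$ are disjoint (then scale by $1/L$ as in your Case 2) or themselves meet at the vertex $z$; in the latter situation it applies the translation $w\mapsto w-z+\tfrac{1}{L}z$, which carries the pair $\widetilde{\Delta}_{m-1}^{(1)},\widetilde{\Delta}_{m-1}^{(2)}$ to two $(m-1)$-complexes meeting at $z'=\tfrac{1}{L}z$ \emph{inside} $\mathcal K^{\langle m\rangle}$, and carries $\Delta_0^{(1)},\Delta_0^{(2)}$ to $0$-complexes at exactly the same mutual distance. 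The inductive hypothesis then applies with no degradation, which is precisely what your angular estimate cannot deliver. If you want to salvage your approach you would have to replace the metric estimate at $p$ by such a distance-preserving reduction; as written, the proof is incomplete.
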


\begin{proof}
Let
\begin{equation}
C_5^{(m)} = \inf \left\{ \mbox{\rm dist}(\Delta_0^{(1)},\Delta_0^{(2)}):
\Delta_0^{(1)},\Delta_0^{(2)}\in\mathcal T_0,\Delta_0^{(1)},\Delta_0^{(2)} \subset \mathcal{K}^{\left\langle m \right\rangle}, \Delta_0^{(1)}\cap \Delta_0^{(2)}=\emptyset\right\}>0.
\end{equation}
We will see by induction that $C_5^{(m)}=C_5^{(2)},$ $m=2,3,....$

 For $m=2$ there is nothing to prove. Assume that $C_5^{(m)}=C_5^{(2)}$ for some $m>2,$ take {$\Delta_0^{(1)},\Delta_0^{(2)}\subset \mathcal K^{\langle m+1\rangle}.$}
 We have three possibilities.

\smallskip

\textsc{Case 1.} Both $\Delta_0^{(1)},\Delta_0^{(2)}$ are subsets of a common $m-$complex. This complex is an isometric image of $\mathcal K^{\langle m\rangle},$ thus ${\mbox{dist}\left(\Delta_0^{(1)},\Delta_0^{(2)}\right)\geq C_5^{(m)}= C_5^{(2)}}$ by assumption.

\smallskip

\textsc{Case 2.} $\Delta_0^{(1)}\subset\widetilde{\Delta}_m^{(1)}$, $\Delta_0^{(2)}\subset\widetilde{\Delta}_m^{(2)}$, where $\widetilde{\Delta}_m^{(1)},\widetilde{\Delta}_m^{(2)}$ are two disjoint $m-$complexes.
 Then $\frac{1}{L}\widetilde{\Delta}_m^{(1)},$ $\frac{1}{L}\widetilde{\Delta}_m^{(2)}$ are two disjoint $(m-1)-$complexes included in $\mathcal K^{\langle m \rangle};$ from the assumption the distance from any $0-$complex in $\frac{1}{L}\widetilde{\Delta}_m^{(1)}$ to any $0-$complex in $\frac{1}{L}\widetilde{\Delta}_m^{(2)}$ is not smaller than $C_{5}^{(m)}.$ Then
  $\frac{1}{L}\Delta_0^{(1)}, $ $\frac{1}{L}\Delta_0^{(2)}$ are two $(-1)$-complexes included in some (disjoint) $0-$complexes from $\mathcal K^{\langle m \rangle},$ consequently
$$\mbox{dist}\left(\Delta_0^{(1)}, \Delta_0^{(2)}\right)= L \ \mbox{dist}\left(\frac{1}{L}\Delta_0^{(1)}, \frac{1}{L}\Delta_0^{(2)}\right)\geq LC_5^{(m)}> C_5^{(m)}=C_5^{(2)}.$$

\smallskip

\textsc{Case 3.} {$\Delta_0^{(1)}\subset\widetilde{\Delta}_m^{(1)}$, $\Delta_0^{(2)}\subset\widetilde{\Delta}_m^{(2)}$, where $\widetilde{\Delta}_m^{(1)},\widetilde{\Delta}_m^{(2)}$ are two neighboring $m-$complexes. Let $\widetilde{\Delta}_m^{(1)}\cap\widetilde{\Delta}_m^{(2)}=\{z\}, z \in V_m^{\langle \infty \rangle}.$}

\smallskip

(a) If   $\Delta_0^{(1)}\subset \widetilde{\Delta}_{m-1}^{(1)},$
$\Delta_0^{(2)}\subset\widetilde{\Delta}_{m-1}^{(2)},$ where $\widetilde{\Delta}_{m-1}^{(1)},\widetilde{\Delta}_{m-1}^{(2)}$ are two disjoint $(m-1)-$complexes. Then, as above, $\frac{1}{L}\widetilde{\Delta}_{m-1}^{(1)},$ $\frac{1}{L}\widetilde{\Delta}_{m-1}^{(2)}$ are two disjoint $(m-2)-$complexes included in $\mathcal K^{\langle m \rangle}$ and  $\frac{1}{L}\Delta_0^{(1)}, $ $\frac{1}{L}\Delta_0^{(2)}$ are two $(-1)$-complexes included in some (disjoint) $0-$complexes from $\mathcal K^{\langle m \rangle},$ consequently
$$\mbox{dist}\left(\Delta_0^{(1)}, \Delta_0^{(2)}\right)= L \ \mbox{dist}\left(\frac{1}{L}\Delta_0^{(1)}, \frac{1}{L}\Delta_0^{(2)}\right)\geq LC_5^{(m)}> C_5^{(m)}=C_5^{(2)}.$$

\smallskip

 (b) The remaining case is: $\Delta_0^{(1)}\subset \widetilde{\Delta}_{m-1}^{(1)},$
$\Delta_0^{(2)}\subset \widetilde{\Delta}_{m-1}^{(2)},$ and $\widetilde{\Delta}_{m-1}^{(1)}, \widetilde{\Delta}_{m-1}^{(2)}$ are two neighboring $(m-1)-$complexes. Then we necessarily have $\widetilde{\Delta}_{m-1}^{(1)}\cap\widetilde{\Delta}_{m-1}^{(2)}=\{z\}.$ From scaling, $z':=\frac{1}{L}z\in V_{m-1}^{\langle\infty\rangle}\cap\mathcal K^{\langle m \rangle}$.
Then $\widetilde{\Delta}_{m-1}^{(3)}:=\left(\widetilde{\Delta}_{m-1}^{(1)}-z\right)+\frac{1}{L}z$
and
 $\widetilde{\Delta}_{m-1}^{(4)}:=\left(\widetilde{\Delta}_{m-1}^{(2)}-z\right)+\frac{1}{L}z$
 are two $(m-1)-$complexes included in $\mathcal K^{\langle m \rangle}$ with common vertex $z'.$ Therefore
 $\widetilde{\Delta}_0^{(1)}:=\left(\Delta_0^{(1)}-z\right)+\frac{1}{L}z$ and
 $\widetilde{\Delta}_0^{(2)}:=\left(\Delta_0^{(2)}-z\right)+\frac{1}{L}z$
 are two $0-$complexes included in $\mathcal K^{\langle m\rangle}$ for which $\mbox{dist}\left(\Delta_0^{(1)}, \Delta_0^{(2)}\right) = \mbox{dist}\left(\widetilde{\Delta}_0^{(1)}, \widetilde{\Delta}_0^{(2)}\right).$ From the assumption we have $\mbox{dist}\left(\widetilde{\Delta}_0^{(1)}, \widetilde{\Delta}_0^{(2)}\right) \geq C_5^{(m)}=C_5^{(2)}$.

\begin{figure}[ht]
\centering
\includegraphics[scale=0.2]{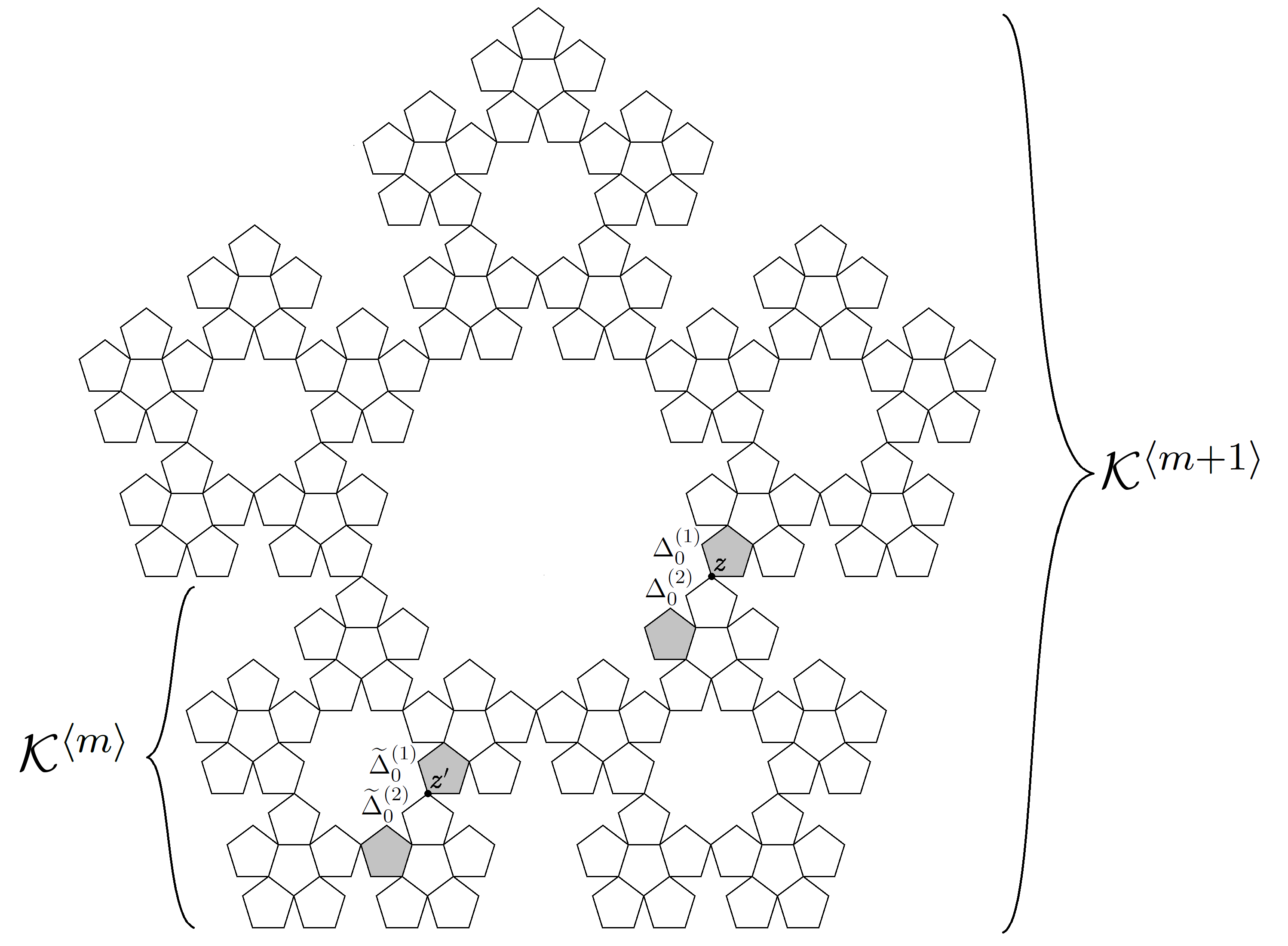}
\caption{The case (3) (b) in the proof of Lemma \ref{lem:const}: $\textrm{dist}\left(\widetilde{\Delta}_0^{(1)}, \widetilde{\Delta}_0^{(2)}\right) = \textrm{dist}\left(\Delta_0^{(1)}, \Delta_0^{(2)}\right)$.}
\label{fig:complexshift}
\end{figure}

\iffalse
{Then there exist two neighboring $(m-1)-$complexes $\Delta_{m-1}^{(3)}, \Delta_{m-1}^{(4)} \subset \mathcal K^{\langle m \rangle}$ such that $\bar{\Delta}_{m-1}^{(1)} \cup \bar{\Delta}_{m-1}^{(2)}$ can be obtained as a translation of $\Delta_{m-1}^{(3)}\cup \Delta_{m-1}^{(4)}$.
Indeed, we can observe that $\widetilde{\Delta}_m^{(1)}$ and $\widetilde{\Delta}_m^{(2)}$ are the images after a homothety with center $z$ and scale factor $L$ of $\bar{\Delta}_{m-1}^{(1)}$ and $\bar{\Delta}_{m-1}^{(2)}$ respectively.
On the other hand, one can see that by definition $\widetilde{\Delta}_m^{(1)}$ and $\widetilde{\Delta}_m^{(2)}$ are as well the images of some neighboring $(m-1)$-complexes $\Delta_{m-1}^{(3)}, \Delta_{m-1}^{(4)}\subset \mathcal K^{\langle m \rangle}$ after a homothety with center $0$ and scale factor $L$.
Direct calculations give $\Delta_{m-1}^{(3)} = \bar{\Delta}_{m-1}^{(1)} - z + \frac{1}{L}z, \Delta_{m-1}^{(4)} = \bar{\Delta}_{m-1}^{(2)} - z + \frac{1}{L}z$.
$$
\mbox{dist}\left(\Delta_0^{(1)}, \Delta_0^{(2)}\right)= \mbox{dist}\left(\Delta_0^{(1)} - z + \frac{1}{L}z, \Delta_0^{(2)} - z + \frac{1}{L}z\right)\geq C_5^{(m)}=C_5^{(2)}.
$$}

\fi
The common value of $C_5^{(m)}$ will be denoted by $C_5.$ The lemma follows.
\end{proof}

It will be convenient to have a statement for points instead of complexes.

\begin{corollary}
\label{cor:const} For all $x,y \in \mathcal{K}^{\left\langle \infty \right\rangle} \backslash V^{\left\langle \infty \right\rangle}_{0}$ satisfying $\Delta_0(x) \cap \Delta_0(y) = \emptyset$, one has $|x-y| \geq C_5.$ where $C_5$ is the constant from Lemma \ref{lem:const}.
\end{corollary}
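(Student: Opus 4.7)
The plan is that this corollary is an essentially immediate consequence of Lemma \ref{lem:const}, once one unpacks the definitions. First I would recall that since $x,y\notin V^{\left\langle \infty\right\rangle}_0$, each of them lies in the interior of a unique $0$-complex (this is the content of the notation $\Delta_0(x)$, introduced in item (8) of the definition in Section \ref{sec:usnf}): in particular $x\in\Delta_0(x)$ and $y\in\Delta_0(y)$.

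Next, by the hypothesis $\Delta_0(x)\cap\Delta_0(y)=\emptyset$, the two $0$-complexes $\Delta_0(x), \Delta_0(y)\in\mathcal T_0$ are disjoint, so Lemma \ref{lem:const} applies and yields
\begin{equation*}
\mbox{dist}\bigl(\Delta_0(x),\Delta_0(y)\bigr)\geq C_5.
\end{equation*}

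Since $x\in\Delta_0(x)$ and $y\in\Delta_0(y)$, directly from the definition of $\mbox{dist}$ (infimum over pairs of points in the two sets) we get
\begin{equation*}
|x-y|\geq \mbox{dist}\bigl(\Delta_0(x),\Delta_0(y)\bigr)\geq C_5,
\end{equation*}
which is what was claimed. There is no real obstacle here; the corollary is just a pointwise restatement of Lemma \ref{lem:const}, whose content (the existence of a strictly positive uniform separation constant $C_5$ between disjoint $0$-complexes) has already been established by the induction in the previous proof.
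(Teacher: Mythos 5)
Your proof is correct and matches the paper's treatment: the paper states this corollary without proof, regarding it as the immediate pointwise consequence of Lemma \ref{lem:const} that you spell out (namely $x\in\Delta_0(x)$, $y\in\Delta_0(y)$, and $|x-y|\geq \mathrm{dist}(\Delta_0(x),\Delta_0(y))\geq C_5$). The only cosmetic remark is that $x$ need not lie in the \emph{interior} of $\Delta_0(x)$; uniqueness of the containing $0$-complex is all that is needed, and it is all the definition provides.
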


\begin{lemma}
\label{lem:metrics}
For every $M \in \mathbb{Z}$ there exist positive constants $C_6(M)$, $C_7(M)$ such that for every $x,y \in \mathcal{K}^{\left\langle \infty \right\rangle}$ we have
\begin{equation} \label{eq:metrics1}
C_{6}(M) \left|x-y\right| \leq d_M\left(x,y\right) \leq \max \left\{2, C_7(M) \left|x-y\right|^{d_f} \right\},
\end{equation}
where $d_f = \frac{\log N}{\log L}$ is the Hausdorff dimension of the fractal.
\end{lemma}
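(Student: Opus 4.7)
\textbf{Proof plan for Lemma \ref{lem:metrics}.} My plan is to reduce to the case $M=0$ via the self-similarity identity $d_M(x,y)=d_0(L^{-M}x, L^{-M}y)$, since the map $z\mapsto L^{-M}z$ sends $M$-complexes bijectively to $0$-complexes and preserves the intersection pattern; then the two inequalities are proved at scale $M$ directly (scaling being used only as a consistency check for the dependence on $M$).

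For the \emph{lower bound}, if $d_M(x,y)=n$, I pick a minimal chain $\Delta_M^{(1)},\ldots,\Delta_M^{(n)}$ of $M$-complexes with $x\in\Delta_M^{(1)}$, $y\in\Delta_M^{(n)}$ and consecutive intersections nonempty. Since each $M$-complex has Euclidean diameter at most $L^M\diam(\mathcal{K}^{\langle 0\rangle})$, choosing a point in every intersection and applying the triangle inequality yields $|x-y|\le n L^M\diam(\mathcal{K}^{\langle 0\rangle})$. Hence one may take $C_6(M)=(L^M\diam(\mathcal{K}^{\langle 0\rangle}))^{-1}$.

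For the \emph{upper bound}, the crucial input is Lemma \ref{lem:const}, which by scaling implies that any two disjoint $m$-complexes sit at Euclidean distance at least $L^m C_5$. First I would establish a dichotomy at every scale $m$: if $|x-y|<L^m C_5$ then $d_m(x,y)\le 2$. Indeed, $d_m(x,y)\ge 3$ would force every $m$-complex containing $x$ to be disjoint from every $m$-complex containing $y$, and the scaled Lemma \ref{lem:const} would then give $|x-y|\ge L^m C_5$, a contradiction. Applied at $m=M$, this already handles the range $|x-y|<L^M C_5$ and yields $d_M(x,y)\le 2$. For $|x-y|\ge L^M C_5$, I let $m\ge M$ be the smallest integer with $|x-y|<L^m C_5$; by minimality $L^m\le L|x-y|/C_5$. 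The dichotomy gives $d_m(x,y)\le 2$, so $x,y$ lie in at most two neighboring $m$-complexes $\widetilde\Delta_m^{(1)},\widetilde\Delta_m^{(2)}$ (possibly coinciding). Each $\widetilde\Delta_m^{(i)}$ contains exactly $N^{m-M}$ $M$-subcomplexes, which form a connected sub-graph under the intersection relation by iterating the connectivity axiom (Definition \ref{def:snf}(5)) together with the self-similar structure. Thus the $M$-graph distance between any two $M$-subcomplexes inside a given $\widetilde\Delta_m^{(i)}$ is at most $N^{m-M}$, and concatenating two such internal paths through a common vertex of $\widetilde\Delta_m^{(1)}\cap\widetilde\Delta_m^{(2)}$ gives
\[
d_M(x,y)\le 2N^{m-M}=2L^{d_f(m-M)}\le 2L^{d_f}C_5^{-d_f}N^{-M}|x-y|^{d_f},
\]
using $N=L^{d_f}$. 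Setting $C_7(M)=2L^{d_f}C_5^{-d_f}N^{-M}$ and combining the two ranges yields the claimed bound $d_M(x,y)\le\max\{2,\,C_7(M)|x-y|^{d_f}\}$.

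The \emph{main obstacle} is the internal graph-connectedness step: one must verify that the collection of $M$-subcomplexes of any $m$-complex is connected under the intersection relation, so that a chain realizing $d_M$ can be chosen to lie entirely inside the two $m$-complexes identified above. This follows by iterating the connectivity axiom of simple nested fractals together with self-similarity, after which the quantitative estimate on the chain length is simply the crude total-count bound $N^{m-M}$; any sharper metric description of the $M$-sub-complex graph (which in general need not admit a geodesic structure comparable with Euclidean distance) is not needed for the stated inequality.
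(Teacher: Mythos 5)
Your proof is correct and follows essentially the same route as the paper: the lower bound via the triangle inequality and the diameter of an $M$-complex, and the upper bound via Lemma \ref{lem:const} (scaled to level $m$) together with the crude count $N^{m-M}$ of $M$-subcomplexes of a connected $m$-complex, yielding the same constants $C_6(M)=\left(L^M\diam(\Delta_0)\right)^{-1}$ and $C_7(M)=2C_5^{-d_f}N^{-M+1}$. The only difference is organizational — you select the cutoff scale $m$ metrically (smallest $m$ with $|x-y|<C_5L^m$) rather than through the paper's three-case analysis of when the ancestor complexes of $x$ and $y$ first coincide or touch, and you make explicit the connectedness of the subcomplex intersection graph that the paper uses implicitly.
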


\begin{proof}
The first inequality comes from the fact that $d_M$ is the metric counting the number of complexes we must visit when passing from $x$ to $y$. By the triangle inequality, the length of the line segment joining $x$ and $y$ is not smaller than the sum of lengths of polygonal chain segments,
\begin{equation*}
\left|x-y\right| \leq d_{M}\left(x,y\right) \cdot \textrm{diam}\left(\Delta_M\right),
\end{equation*}
where $\textrm{diam}\left(\Delta_M\right)$ is the common diameter of any $M$-complex. So the leftmost inequality  in \eqref{eq:metrics1} holds with $C_6 (M)= \frac{1}{\textrm{diam}\left(\Delta_M\right)} = \frac{1}{L^M \textrm{diam}\left(\Delta_0\right)}$.

{Now we  prove the rightmost inequality. If $d_M(x,y)=0$ of $d_M(x,y)=1$ then the inequality is obvious.}  Take $x,y \in \mathcal{K}^{\left\langle \infty \right\rangle} \backslash V^{\left\langle \infty \right\rangle}_{M}$ with $d_M(x,y)>1$, i.e. $\Delta_M(x)\neq \Delta_M(y)$. Let $M_B>M$ be the smallest number such that $\Delta_{M_B}(x) = \Delta_{M_B}(y)$. One of the following three cases occurs.

\textsc{Case 1.}
 If $\Delta_{M_B-1}(x) \cap \Delta_{M_B-1}(y) = \emptyset$, then
\begin{equation}
\label{eq:complexest1}
d_{M}\left(x,y\right) \leq N^{M_B-M},
\end{equation}
where $N$ is a number of similitudes, i.e. $N^{M_B-M}$ is a number of $M$-complexes in any $M_B$-complex. On the other hand, from scaling,
\begin{equation}
\label{eq:euclidest1}
\left|x-y\right| \geq C_5 \cdot L^{M_B-1},
\end{equation}
where $L$ is the length scaling factor of the fractal and $C_5$ is the minimum of distances between two disjoint $0$-complexes, introduced in Lemma \ref{lem:const}.
Consequently,
\begin{equation*}
d_M(x,y) \leq C_5^{- \log N / \log L} N^{-M+1} \left|x-y\right|^{\log N / \log L}.
\end{equation*}
Since $\frac{\log N}{\log L}=d_f,$ the inequality follows.

\smallskip

 \textsc{Case 2.} If $\Delta_{M_B-1}(x) \cap \Delta_{M_B-1}(y) \neq \emptyset$ and there exists $M_S > M$ such that $\Delta_{M_S}(x) \cap \Delta_{M_S}(y) \neq \emptyset$, but $\Delta_{M_S-1}(x) \cap \Delta_{M_S-1}(y) = \emptyset$, then, similarly as above,
\begin{equation*}
d_M(x,y) \leq 2 N^{M_S-M},
\end{equation*}
\begin{equation*}
\left|x-y \right| \geq C_5 \cdot L^{M_S-1}
\end{equation*}
so that
\begin{equation*}
d_M(x,y) \leq 2 C_5^{- \log N / \log L} N^{-M+1} \left|x-y\right|^{\log N / \log L} =2 C_5^{- d_f} N^{-M+1} \left|x-y\right|^{d_f} .
\end{equation*}

\smallskip

  \textsc{Case 3.} If $\Delta_{M_B-1}(x) \cap \Delta_{M_B-1}(y) \neq \emptyset$ and $\Delta_{M}(x) \cap \Delta_{M}(y) \neq \emptyset$, then, since it is assumed $d_M(x,y)>1,$ we have
\begin{equation*}
d_M(x,y) = 2.
\end{equation*}

 Therefore the inequality holds with $C_6 (M) = L^{-M}/ \textrm{diam}\left(\Delta_0\right) $, $C_7(M) = 2N^{-M+1}C_5^{- d_f}$. The proof for $x$ or $y$ in $V^{\left\langle \infty \right\rangle}_{M}$ comes analogously.
\end{proof}

{We are now in a position to show that there exists $n \in \mathbb{N}$ such that for every $M \in \mathbb{Z}$, if $x,y\in \mathcal{K}^{\left\langle \infty \right\rangle}$ satisfy $|x-y|\leq L^M$, then $d_M(x,y)\leq n$ (recall that under this assumption the two-sided estimates
\ref{eq:kum} has been proven in \cite{bib:Kum}). Indeed, from Lemma \ref{lem:metrics} we get
\begin{equation} \label{eq:ass_Kum}
d_M(x,y) \leq 2 \vee \left(2N^{-M+1}C_5^{-d_f} \left|x-y\right|^{d_f}\right)
\leq 2 \vee \left(2N C_5^{-d_f} N^{-M} L^{Md_f}\right) = 2 \vee \left(2N C_5^{-d_f}\right).
\end{equation}
Hence we can simply take $n=\max\left\{2,2N C_5^{-d_f}\right\}$ (uniformly in $M$).
}

%\blue{We are now ready to state that the estimates \ref{eq:kum} introduced in \cite{bib:Kum} hold for any simple nested fractal. Indeed, there exists $n \in \mathbb{N}$ such that for every $M \in \mathbb{Z}$, if $x,y\in \mathcal{K}^{\left\langle \infty \right\rangle}$ satisfy $|x-y|\leq L^M$, then $d_M(x,y)\leq n$. One can take $n=\max\left\{2,2N C_5^{-d_f}\right\}$ (uniformly in $M$), since
%\begin{equation} \label{eq:ass_Kum}
%d_M(x,y) \leq 2 \vee \left(2N^{-M+1}C_5^{-d_f} \left|x-y\right|^{d_f}\right)
%\leq 2 \vee \left(2N C_5^{-d_f} N^{-M} L^{Md_f}\right) = 2 \vee \left(2N C_5^{-d_f}\right).
%\end{equation}}

We also needed the following lemma, giving the upper estimate of the cardinality of $\mathcal{L}_{n,x},$ introduced in \ref{eq:xComplexdist}. Informally speaking, this is the number of $M-$cells lying at $d_M-$distance $n$ from the point $x.$

\begin{lemma}
\label{lem:complab}
There exists a universal constant $C_{8}$ such that for any $M \in \mathbb{Z}$ and $x \in \mathcal{K}^{\left\langle \infty \right\rangle}$
\begin{equation*}
\# \mathcal{L}_{M,n,x} \leq C_{8} n^{d_f}.
\end{equation*}
\end{lemma}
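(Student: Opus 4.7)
The plan is to reduce the counting of $\mathcal{L}_{M,n,x}$ to a volume comparison against a Euclidean ball, using two ingredients: (i) a chain of $n$ pairwise-intersecting $M$-complexes has diameter at most $nL^MD$, where $D:=\diam(\mathcal K^{\langle 0\rangle})$, and (ii) the $d_f$-dimensional Hausdorff measure $\mu$ is Ahlfors $d_f$-regular on $\mathcal K^{\langle\infty\rangle}$.

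First I would localize $\mathcal L_{M,n,x}$ in Euclidean space. If $\Delta_M\in\mathcal L_{M,n,x}$, then by the definition \eqref{eq:xComplexdist} there is a chain $\Delta_M^{(1)},\ldots,\Delta_M^{(n)}=\Delta_M$ with $x\in\Delta_M^{(1)}$ and consecutive complexes meeting. Picking intersection points $z_i\in\Delta_M^{(i)}\cap\Delta_M^{(i+1)}$ and applying the triangle inequality along $x,z_1,\ldots,z_{n-1},y$ (for any $y\in\Delta_M$) yields $|x-y|\le nL^MD$, since each $M$-complex has diameter $L^MD$. Therefore
$$
\bigcup_{\Delta_M\in\mathcal L_{M,n,x}}\Delta_M\;\subset\;\overline{B}(x,nL^MD)\cap\mathcal K^{\langle\infty\rangle}.
$$

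Next I would convert this inclusion into a counting bound via $\mu$. Distinct $M$-complexes overlap only on the countable (hence $\mu$-null) set of vertices, and each $M$-complex has $\mu$-mass $\mu(\mathcal K^{\langle M\rangle})=L^{Md_f}=N^M$, by the scaling of $d_f$-dimensional Hausdorff measure under $L^M$ together with the normalization $\mu(\mathcal K^{\langle 0\rangle})=1$. Hence $\sigma$-additivity gives
$$
\#\mathcal L_{M,n,x}\cdot N^M\;=\;\mu\Bigl(\bigcup_{\Delta_M\in\mathcal L_{M,n,x}}\Delta_M\Bigr)\;\le\;\mu\bigl(\overline{B}(x,nL^MD)\cap\mathcal K^{\langle\infty\rangle}\bigr).
$$
The final step is to invoke the uniform upper Ahlfors bound: there exists a constant $C>0$, independent of $x\in\mathcal K^{\langle\infty\rangle}$ and $r>0$, such that $\mu(B(x,r)\cap\mathcal K^{\langle\infty\rangle})\le Cr^{d_f}$. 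Applying this with $r=nL^MD$ produces $\#\mathcal L_{M,n,x}\cdot N^M\le CD^{d_f}n^{d_f}\cdot N^M$, and the factor $N^M$ cancels to give $\#\mathcal L_{M,n,x}\le C_8 n^{d_f}$ with $C_8:=CD^{d_f}$, independent of both $M$ and $x$.

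The main obstacle I anticipate is securing the uniform Ahlfors estimate $\mu(B(x,r)\cap\mathcal K^{\langle\infty\rangle})\le Cr^{d_f}$ with a constant that is truly scale-free, since the unbounded fractal is not compact and the estimate must survive the full range of $M\in\Zwithneg$. For simple nested fractals this is a standard consequence of the open set condition and self-similarity and can be derived by covering $B(x,r)$ with a uniformly bounded number of complexes whose diameter is comparable to $r$: pick $K\in\Zwithneg$ with $L^KD\asymp r$, note that by a planar packing argument only a bounded (fractal-dependent) number of $K$-complexes can meet $B(x,r)$, and multiply by the per-complex mass $N^K\asymp r^{d_f}$. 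Once this uniform bound is in place, the preceding volume count is immediate, and the cancellation of $N^M$ transparently reflects the scale-invariance of the counting problem, giving the universal constant $C_8$ claimed in the lemma.
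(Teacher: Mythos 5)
Your proof is correct and follows essentially the same route as the paper's: both arguments first localize $\mathcal{L}_{M,n,x}$ inside a Euclidean ball of radius of order $nL^{M}$ (you via the chain of intersecting complexes, the paper via the metric comparison of Lemma \ref{lem:metrics}, which encodes the same fact) and then count complexes by a $d_f$-dimensional volume comparison. The only cosmetic difference is that you sum the exact $\mu$-masses $N^{M}$ of the complexes and invoke the upper Ahlfors bound for $\mu$, whereas the paper packs mutually separated designated vertices and compares the measures of the corresponding balls; the underlying Ahlfors-regularity input is the same.
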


\begin{proof}
Let  $M \in \mathbb{Z}$ be fixed. The lemma follows from the comparison of the Euclidean distance on the plane and the $M$-graph distance. Notice that if we pick one vertex from each $M$-complex in the way that all these vertices have the same alignment with respect to the $M$-complex (e.g. we can choose the leftmost of the lowest vertices of each $M$-complex), then we get the collection of points, exactly one in each $\Delta_M \in \mathcal{T}_M$, mutually at Euclidean distance greater than or equal to $L^M c_1$, where $c_1 = \inf \left\{ \left\| \nu_i-\nu_j \right\|: 1\leq i,j \leq N, i\neq j\right\}.$

Also notice that if $y \in \Delta_M \in \mathcal{L}_{M,n,x}$, then
$$
d_M(x,y) \leq n.
$$

Using the inequality
$$
\left|x-y\right| \leq \frac{1}{C_6(M)} d_M(x,y)
$$
we get that all $M$-complexes from $\mathcal{L}_{M,n,x}$ are included in the ball
$$
\left\{y\in \mathbb{R}^2: \left|x-y\right| \leq \frac{ n}{C_6(M)} \right\}.
$$

Let us now estimate how many points which are mutually at distance greater or equal to $L^M c_1$ can be packed into such ball. It is limited by the  ratio of Hausdorff-$d_f$measures of a ball with radius $\frac{n}{C_6(M)}+\frac{L^M c_1}{2} =n L^M \diam\left(\Delta_0\right)+\frac{L^M c_1}{2}$ and a ball with radius $\frac{L^M c_1}{2}$ (the radius of the bigger ball is increased as some points we picked might lie close to the boundary of the ball). Finally we get
$$
n_{max} \leq \frac{c_2 \left( n L^M \diam\left(\Delta_0\right)+\frac{L^M c_1}{2}\right)^{d_f}}{c_{3} \left(\frac{L^M c_1}{2} \right)^{d_f} } = c_4 \left(\frac{2n \diam\left(\Delta_0\right)}{ c_1} +1\right)^{d_f} \leq C_8 n^{d_f}
$$
for a sufficiently large constant $C_{8}$, independent of $M \in \mathbb{Z}$.
\end{proof}

\bigskip
% {
%\noindent
%\textbf{Acknowledgements.} We would like to thank the anonymous referee for his valuable suggestions and comments.
%}

\end{document}